\def\ps@pprintTitle{%
	\let\@oddhead\@empty
	\let\@evenhead\@empty
	\def\@oddfoot{\footnotesize\itshape
		%Preprint submitted to \ifx\@journal\@empty Elsevier
		%\else\@journal\fi
		\hfill\today}%
	\let\@evenfoot\@oddfoot}
\newcommand{\rtn}{\mathrm{\mathbf{R}}}% real number
\newcommand*{\PR}{\mathrm{\mathbf{P}}}% probability
\newcommand*{\EX}{\mathrm{\mathbf{E}}}% expectation
\newcommand*{\dif}{\mathop{}\!\mathrm{d}}
\newcommand{\calA}{\mathcal{A}}
\newcommand{\calB}{\mathcal{B}}
\newcommand{\calF}{\mathcal{F}}
\newcommand{\calH}{\mathcal{H}}
\newcommand{\calM}{\mathcal{M}}
\newcommand{\calO}{\mathcal{O}}
\newcommand{\calP}{\mathcal{P}}
\newcommand{\calS}{\mathcal{S}}
\newcommand{\calT}{\mathcal{T}}
\newcommand{\calU}{\mathcal{U}}
\newcommand{\calV}{\mathcal{V}}
\newcommand*{\prs}{\PR\text{\,--\,\,}a.s.}
\newcommand*{\pts}{\dif\PR\!\times\!\dif t\text{\,--\,\,}a.e.}
\newcommand*{\dte}{\dif t\text{\,--\,\,}a.e.}
\newcommand{\tT}{[0,T]}
\newcommand{\intT}[2][T]{\int^{#1}_{#2}}
\newcommand{\me}{\mathrm{e}}
\newcommand{\one}[1]{{\bf 1}_{#1}}
\newcommand{\vp}{\varepsilon}
\newcommand{\EXlr}[1]{\mathrm{\mathbf{E}}\left[#1\right]}
\DeclareMathOperator*{\esssup}{esssup}
\DeclareMathOperator*{\essinf}{essinf}
\crefname{thm}{Theorem}{Theorems}
\crefname{pro}{Proposition}{Propositions}
\crefname{lem}{Lemma}{Lemmas}
\crefname{rmk}{Remark}{Remarks}
\crefname{cor}{Corollary}{Corollaries}
\crefname{dfn}{Definition}{Definitions}
\crefname{ex}{Example}{Examples}
\crefname{section}{Section}{Sections}
\crefname{subsection}{Subsection}{Subsections}
\newtheorem{thm}{Theorem}
\newtheorem{lem}[thm]{Lemma}
\newtheorem{pro}[thm]{Proposition}
\newtheorem{rmk}[thm]{Remark}
\newtheorem{dfn}[thm]{Definition}
\journal{}
\begin{document}
	
\begin{frontmatter}

\title{{\boldmath\bf Stochastic differential games with state constraints and Isaacs equations with nonlinear Neumann problems}\tnoteref{found}}
\tnotetext[found]{Supported by the National Natural Science Foundation of China (Nos.\;11371362 and 11601509) and the Natural Science Foundation of Jiangsu Province (No.\;BK20150167).}
%\date{April 23, 2012}
\author{Lishun Xiao\corref{cor1}}
\ead{xiaolishun@cumt.edu.cn}
\cortext[cor1]{Corresponding author}
%\author{Shengjun Fan}%
%\ead{f\_s\_j@126.com}
\author{Dejian Tian}%
%\ead{tiandejian1985@163.com}

\address{School of Mathematics, China University of Mining and Technology, Xuzhou, Jiangsu, 221116, P.R. China}

\begin{abstract}
  We investigate a two-player zero-sum stochastic differential game problem with the state process being constrained in a connected bounded closed domain, and the cost functional described by the solution of a generalized backward stochastic differential equation (GBSDE for short). We show that the value functions enjoy a (strong) dynamic programming principle, and are the unique viscosity solution of the associated Hamilton-Jacobi-Bellman-Isaacs equations with nonlinear Neumann boundary problems. To obtain the existence for viscosity solutions, we provide a new approach utilizing the representation theorem for generators of the GBSDE, which is proved by a random time change method and is a novel result in its own right.
\end{abstract}

\begin{keyword}
Stochastic differential game\sep 
Dynamic programming principle\sep
Backward stochastic differential equation\sep
Isaacs equation\sep
Neumann boundary problem
\MSC[2010] 93E20, 60H10, 35K20, 49L25 
\end{keyword}
\end{frontmatter}

\section{Introduction}
\label{sec:Introduction}

This paper is concerned with a two-player zero-sum stochastic differential game problem with state constraints and recursive cost functionals. The presence of state constraints refers to the requirement that the state process lives in a connected bounded closed  domain, where the convexity is unnecessary in our framework. And the cost functional is a recursive one because it is governed by the solution of a certain backward stochastic differential equations (BSDEs for abbreviation). The main objective of this paper is to establish a (strong) dynamic programming principle (DPP for short) for this control problem and characterize the value function as a unique viscosity solution of associated Hamilton-Jacobi-Bellman-Isaacs equations with nonlinear Neumann boundary problems.

\citet*{FlemingSouganidis1989IUMJ} initially explored the two-player zero-sum stochastic differential game, which translated from former purely deterministic differential game, such as \citet*{EvansSouganidis1984IUMJ}, into the stochastic framework. Based on their works, many researchers developed the stochastic differential game to different directions, see \citet*{Swiech1996JMAA},  \citet*{BuckdahnCardaliaguetRainer2004SICON}, \citet*{BayraktarPoor2005SICON} and the references therein. Based on the pioneer works of \citet*{PardouxPeng1990SCL}, \citet*{Peng1992SSR} introduced the nonlinear BSDE theory to the stochastic recursive optimal control and obtained the probabilistic interpretation for associated HJB equations. Their dynamics consists of a controlled coupled forward-backward stochastic differential equation, and the forward one describes the state process and the backward one induces the cost functional. Some researchers also studied zero-sum stochastic differential games governed by BSDEs, such as \citet*{HamadeneLepeltier1995SSR,HamadeneLepeltier1995SCL}, but they need the diffusion coefficient is non-degenerate and independent of controls.  \citet*{BuckdahnLi2008SICON} eliminated these restrictions and improved corresponding results of \citet*{FlemingSouganidis1989IUMJ} with two main differences. The first one is that their admissible control processes can depend on the full past of the trajectories of the driving Brownian motion; the second one is that their cost functional is induced by a controlled BSDE. 

The state constraints for stochastic differential game arise naturally in many practical applications. A primary motivation is considered in the pursuit-evasion game model. For instance, the pursuer and evader move in a prescribed region, and the cost functional is the capture time, see \citet*{CardaliaguetQuincampoixSaint-Pierre2001SICON} for a survey. Recently, \citet*{Krylov2014PTRF} studied a stochastic differential game with state constraints using first exit time from a domain. Nevertheless, the recursive case of stochastic differential game with state constraints has not been widely studied, especially the probabilistic interpretation, in viscosity sense, for Isaacs equations with nonlinear Neumann boundary problems. Most recently, \citet*{LiTang2015ESAIMCOCV} and \citet*{BiswasIshiiSahaWang2017SICON} investigated probabilistic interpretation for nonlinear Neumann problems of HJB equations under different types. The former adopted the optimal control of recursive type but the latter did not. To prove the probabilistic interpretation, \citet*{LiTang2015ESAIMCOCV} employed Peng's approximation method proposed in \citet*{Peng1997SuiJiFenXiXuanJiang}. This method is extensively distributed in different frameworks, see \citet*{BuckdahnLi2008SICON}, \citet*{LiWei2014SICON}, and \citet*{BuckdahnNie2016SICON} for a survey. In this paper we will introduce a new approach to prove the probabilistic interpretation for Isaacs equations utilizing the representation theorem for generators of BSDEs. This representation theorem is originally proved by \citet*{BriandCoquetHuMeminPeng2000ECP} and then further extended by \citet*{Jiang2008AAP}. 

The dynamics of our stochastic differential game with state constraints is given by the following controlled reflected stochastic differential equation (RSDE for short), 
\begin{equation*}
  \begin{cases}
  \displaystyle X^{t,x;u,v}_s=x+\!\!\int^s_t\!\!b(r,X^{t,x;u,v}_r,u_r,v_r)\dif s+\!\int^s_t\!\!\sigma(r,X^{t,x;u,v}_r,u_r,v_r)\dif B_r+\!\!\int^s_t\!\nabla\phi(X^{t,x;u,v}_r)\dif\eta^{t,x;u,v}_r,\\[7pt]
  \displaystyle  \eta^{t,x;u,v}_s=\int^s_t\one{\partial\calO}(X^{t,x;u,v}_r)\dif\eta^{t,x;u,v}_r,\quad \eta^{t,x;u,v}_\cdot\text{ is increasing},\quad s\in[t,T],
  \end{cases}
\end{equation*}
where $(t,x)\in\tT\times\overline \calO$ is the initial data, $u(\cdot)$ and $v(\cdot)$ are two admissible controls, $\eta^{t,x;u,v}_\cdot$ is an adapted continuous increasing process, which is the reflecting process that keeps $X^{t,x;u,v}_\cdot$ from leaving the connected bounded closed domain $\overline \calO$. And the recursive cost functional is defined by $J(t,x;u,v)=Y^{t,x;u,v}_t$, $(t,x)\in\tT\times\overline \calO$, where $Y^{t,x;u,v}_\cdot$ is the unique solution of the following Generalized BSDE (GBSDE for short) introduced by \citet*{PardouxZhang1998PTRF}, setting $\Theta:=(X,Y,Z)$,
\begin{equation}\label{eq:GBSDEInIntroduction}
  \begin{cases}
  -\dif Y^{t,x;u,v}_s=g(s,\Theta^{t,x;u,v}_s,u_s,v_s)\dif s+f(s,X^{t,x;u,v}_s,Y^{t,x;u,v}_s,u_s,v_s)\dif \eta^{t,x;u,v}_s-\langle Z^{t,x;u,v}_s,\dif B_s\rangle,\\
  Y^{t,x;u,v}_T=\Phi(X^{t,x;u,v}_T),\quad s\in[t,T],
  \end{cases}
\end{equation}
where $(X^{t,x;u,v}_\cdot,\eta^{t,x;u,v}_\cdot)$ is the unique solution of RSDE. As usual, the lower and upper value functions of our stochastic differential game with state constraints are, respectively, defined as follows,
\begin{equation*}
  W(t,x):=\essinf_{\beta\in\calB_{t,T}}\esssup_{u\in\calU_{t,T}}J(t,x;u,\beta[u]),\quad
  U(t,x):=\esssup_{\alpha\in\calA_{t,T}}\essinf_{v\in\calV_{t,T}}J(t,x;\alpha[v],v),
\end{equation*}
where $\alpha$ and $\beta$ are, respectively, strategies for player I and II.

In this paper we aim to construct a strong DPP for the lower value function $W(t,x)$, in which the intermediate time is a random time instead of the deterministic one. Using this DPP we illustrate that $W(t,x)$ is the unique viscosity solution of Isaacs equations with nonlinear Neumann problems,
\begin{equation}\label{eq:IsaacsEquationWithNeumannIntroduction}
  \begin{cases}
  \displaystyle 
  \partial_tW(t,x)+H^-(t,x,W,\nabla W,D^2W)=0,&(t,x)\in[0,T)\times\calO,\\
  \displaystyle 
  \frac{\partial}{\partial n}W(t,x)+\sup_{u\in U}\inf_{v\in V}f(t,x,W(t,x),u,v)=0,& (t,x)\in[0,T)\times\partial\calO,\\
  \displaystyle 
  W(T,x)=\Phi(x),& x\in\overline{\calO},
  \end{cases}
\end{equation}
with the Hamiltonian defined as $H^-(t,x,y,p,A):=\sup_{u\in U}\inf_{v\in V}H(t,x,y,p,A,u,v)$, where $H$ is defined as follows,
\begin{equation*}
  H(t,x,y,p,A,u,v)
  :=\frac{1}{2}Tr\{\sigma\sigma^*(t,x,u,v)A\}+\langle b(t,x,u,v),p\rangle+g(t,x,y,\sigma^*p,u,v).
\end{equation*}
Similarly, the upper value function $U(t,x)$ enjoys the symmetric features. 

Another remarkable result of this paper is the representation theorem for generators of GBSDEs. For brevity, we write the GBSDE as the following form with slight abuse of notions, for given $(t,y,z)\in\tT\times\rtn\times\rtn^d$ and $0<\vp\leq T-t$,
\begin{equation}\label{eq:GBSDEWithoutControlsInIntroduction}
  Y^\vp_s=y+\langle z,B_{\tau_{t+\vp}}-B_t\rangle +\int^{\tau_{t+\vp}}_sg(r,Y^\vp_r,Z^\vp_r)\dif r +\int^{\tau_{t+\vp}}_sf(r,Y^\vp_r)\dif A_r-\int^{\tau_{t+\vp}}_s\langle Z^\vp_r,\dif B_r\rangle,
\end{equation}
where the functions $g:\Omega\times\tT\times\rtn\times\rtn^d\mapsto\rtn$ and $f:\Omega\times\tT\times\rtn\mapsto\rtn$ are called generators, $A_\cdot$ is a given adapted continuous increasing process and $\tau_\cdot$ is the inverse function of $A_s+s-A_t$, $s\in[t,T]$. The representation theorem for generators of GBSDE \eqref{eq:GBSDEWithoutControlsInIntroduction} we obtained can be roughly interpreted as that there exist a pair of positive processes $(a_\cdot,b_\cdot)$ with $a_\cdot+b_\cdot=1$ such that
\[a_tg(t,y,z)+b_tf(t,y)=\lim_{\vp\to0^+}\frac{1}{\vp}(Y^\vp_t-y),\quad \prs.\]

Special emphasis should be given to the approach we adopted to prove that $W(t,x)$ is a viscosity solution of the Isaacs equation \eqref{eq:IsaacsEquationWithNeumannIntroduction}. In our approach the representation theorem for generators of GBSDEs plays the essential role. Compared with Peng's approximation method, the representation theorem approach is more straightforward and applicable to general frameworks (such as, non-Lipschitz settings). Moreover, our approach can be easily extended into uncontrolled and unconstrained cases, i.e., the probabilistic interpretation for viscosity solution of semilinear and quasilinear PDEs. This can also be regarded as a new application of the representation theorem.

We would also like to mention that the usual method to such representation ceases to work for the case of GBSDEs, such as \citet*{BriandCoquetHuMeminPeng2000ECP} and \citet*{Jiang2008AAP}. The usual method to the classical representation theorem ($f\equiv 0$ or $A_\cdot\equiv 0$) relies heavily on the Lebesgue Lemma (see Proposition 2.2 in \citet*{Jiang2008AAP}), which can be stated briefly as that $\lim_{\vp\to0^+}\int^{t+\vp}_tg_s\dif s=g_t$ holds for almost every $t$. However, this Lebesgue Lemma is not applicable to the random measure $\dif A_r$, i.e., $\lim_{\vp\to0^+}\int^{t+\vp}_tf_r\dif A_r\neq f_t$. This brings a great difficulty to represent both of $g(t,y,z)$ and $f(t,y)$, even one of them. Here we adopt a method of random time change to address this issue. Applying a time change we can transform the random measure $\dif A_r$ to a Lebesgue measure $\dif r$, combine the terms $g(r,y,z)$ and $f(r,y)$ into a new generator $a_rg(\tau_r,y,z)+b_rf(\tau_r,y)$ and transform the Brownian motions to special martingales, whence the GBSDE is transformed to a BSDE driven by martingales. So the representation problem for generators of GBSDEs is transformed to the counterpart of BSDEs driven by martingales. Some fine properties for these special martingales inherit from standard Brownian motions, which insures the representation theorem for generators of BSDEs driven by martingales.  

The paper is organized as follows. \cref{sec:Preliminaries} gives necessary notations and some elementary results about GBSDEs. \cref{sec:RepresentationTheoremForGBSDEs} introduces the representation theorem for generators of GBSDEs by the method of random time change.  \cref{sec:SDGWithStateConstraintsAndDPP} demonstrates the formulation of stochastic differential game with state constraints, the (strong) DPP and regularity property for the lower and upper value functions. \cref{sec:ViscositySolutionOfHJBI} shows that the lower and upper value functions are the unique viscosity solution of associated Isaacs equations with nonlinear Neumann boundary problem. Finally, some complementary results are provided in \cref{sec:AppendixComplementaryResults}, including some extended regularity of solutions of RSDEs and GBSDEs with respect to initial data.

\section{Preliminaries}\label{sec:Preliminaries}
In this paper, $T>0$ is a given real number, $(\Omega,\calF,\PR)$ is a classical Wiener space, and the driving Brownian motion $B$ will be the coordinate process on $\Omega$. Precisely, $\Omega$ will denote the set of continuous functions from $\tT$ to $\rtn^d$, i.e., $\Omega:=C^0(\tT;\rtn^d)$; $\calF$ is the Borel $\sigma$-algebra over $\Omega$, completed with respect to the Wiener measure $\PR$, and $B$ denotes the coordinate process $B_t(\omega)=\omega_t$, $t\in\tT$, $\omega\in\Omega$. Let $(\calF_t)_{t\geq 0}$ be the natural $\sigma$-algebra filtration generated by $(B_t)_{t\geq 0}$ and augmented by all $\PR$-null sets. We denote by $\calT_{\tau_1,\tau_2}$ the set of all $(\calF_t)$-stopping times with values in $[\tau_1,\tau_2]$.

The Euclidean norms of a vector $x\in\rtn^n$ and a matrix $z\in\rtn^{n\times d}$  will be denoted by $|x|$ and $|z|:=\sqrt{Tr(zz^*)}$, where and hereafter $z^*$ represents the transpose of $z$. We denote by $\calS^2(0,T;\rtn^n)$ the set of $\rtn^n$-valued, $(\calF_t)$-adapted and continuous processes $(y_t)_{t\in\tT}$ such that $\EX[\sup_{t\in\tT}|y_t|^2]<\infty$. Let $\calH^2(0,T;\rtn^n)$ denote the set of $\rtn^n$-valued and $(\calF_t)$-progressively measurable processes $(z_t)_{t\in\tT}$ satisfying that $\EX[\intT{0}|z_s|^2\dif s]<\infty$.
Moreover, let $\calA(0,T;\rtn)$ represent the set of all real-valued, continuous increasing and $(\calF_t)$-progressively measurable processes whose paths vanish at $t=0$.

Next we introduce a generalized BSDE (GBSDE for short) of the following type:
\begin{equation}\label{eq:GBSDEOneDimensional}
  Y_t=\xi+\int^T_tg(s,Y_s,Z_s)\dif s+\int^T_tf(s,Y_s)\dif A_s-\int^T_t\langle Z_s,\dif B_s\rangle,\quad t\in\tT,
\end{equation}
where $A_\cdot\in\calA(0,T;\rtn)$, $\xi\in L^2(\Omega,\calF_T,\PR;\rtn)$ such that $\EX[\me^{\lambda A_T}|\xi|^2]<\infty$ for all $\lambda>0$,  $g:\Omega\times\tT\times\rtn\times\rtn^d\mapsto\rtn$ and $f:\Omega\times\tT \times\rtn\mapsto\rtn$ are called generators, and $g(\cdot,\cdot,y,z)$ and $f(\cdot,\cdot,y)$ are both $(\calF_t)$-progressively measurable for each $(y,z)\in\rtn\times\rtn^d$. A GBSDE associated with prescribed parameters $(\xi,T,g,f,A)$ is also denoted by GBSDE $(\xi,T,g+f\dif A)$. We call $(Y_t,Z_t)_{t\in\tT}$ a solution of GBSDE \eqref{eq:GBSDEOneDimensional} if it belongs to $\calS^2(0,T;\rtn)\times\calH^2(0,T;\rtn^d)$ and satisfies this GBSDE almost surely. The existence and uniqueness for solutions is given by \citet*{PardouxZhang1998PTRF} under the following assumptions.
\begin{enumerate}
	\renewcommand{\theenumi}{(A\arabic{enumi})}
	\renewcommand{\labelenumi}{\theenumi}
	\item\label{A:GFContinuousInY} $\pts$, $g(t,\cdot,z)$ and $f(t,\cdot)$ are continuous for all $z\in\rtn^d$;
	\item\label{A:GFConditionsInYZ} There exist some constants $\lambda_1$, $\lambda_2\in\rtn$ and $K\geq 0$, and two adapted processes $(g_t,f_t)_{t\in\tT}$ valued in $[1,\infty)$ such that $\pts$, for each $y$, $y_1$, $y_2\in\rtn$ and $z$, $z_1$, $z_2\in\rtn^d$,
	\begin{enumerate}
		\renewcommand{\theenumii}{(\roman{enumii})}
		\renewcommand{\labelenumii}{\theenumii}
		\item\label{A:ItemGFMonotonicInY} $( y_1-y_2)\big(g(t,y_1,z)-g(t,y_2,z)\big)\leq \lambda_1|y_1-y_2|^2,\ (y_1-y_2)\big( f(t,y_1)-f(t,y_2)\big)\leq\lambda_2|y_1-y_2|^2$;
		\item\label{A:ItemGLipschitzInZ} $|g(t,y,z_1)-g(t,y,z_2)|\leq K|z_1-z_2|$;
		\item\label{A:ItemGFLinearInY} $|g(t,y,z)|\leq g_t+K(|y|+|z|),\quad |f(t,y)|\leq f_t+K|y|$;
		\item\label{A:ItemGFSquareIntegrability} $\EX\big[\int^T_0\me^{\lambda A_t}|g_t|^2\dif t\big]+\EX\big[\int^T_0\me^{\lambda A_t}|f_t|^2\dif A_t\big]<\infty$ for all $\lambda>0$.
	\end{enumerate}
\end{enumerate}

\begin{lem}\label{lem:EstimateForGBSDEYZDrivenByA}
	Let $A_\cdot\in\calA(0,T;\rtn)$, $\xi\in L^2(\Omega,\calF_T,\PR;\rtn)$, \ref{A:GFContinuousInY} and \ref{A:GFConditionsInYZ} hold, and $(Y_t,Z_t)_{t\in\tT}$ is a solution of GBSDE \eqref{eq:GBSDEOneDimensional}. Then for any $\lambda>0$ there exists a constant $C\geq0$ depending on $\lambda_1$, $\lambda_2$, $K$ and $T$ such that
	\begin{align*}
	  &\EX\bigg[\sup_{t\in\tT}\me^{\lambda A_t}|Y_t|^2+\int^T_0\me^{\lambda A_t}|Y_t|^2\dif A_t+\int^T_0\me^{\lambda A_t}|Z_t|^2\dif t\bigg]\\
	  &\leq C\EX\bigg[\me^{\lambda A_T}|\xi|^2+\int^T_0\me^{\lambda A_t}|g(t,0,0)|^2\dif t+\int^T_0\me^{\lambda A_t}|f(t,0)|^2\dif A_t\bigg].
	\end{align*}
\end{lem}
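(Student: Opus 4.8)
The plan is to apply It\^o's formula to the weighted process $\me^{\beta t+\lambda A_t}|Y_t|^2$, where $\beta\ge0$ is an auxiliary constant to be fixed later (and eventually folded into $C$ through $\me^{\beta t}\le\me^{\beta T}$), and then to dominate the resulting driver terms via the monotonicity, Lipschitz and growth conditions in \ref{A:GFConditionsInYZ}. Writing $\Gamma_s:=\me^{\beta s+\lambda A_s}$ and using $\dif Y_s=-g(s,Y_s,Z_s)\dif s-f(s,Y_s)\dif A_s+\langle Z_s,\dif B_s\rangle$, It\^o's formula yields an identity whose left-hand side carries $\Gamma_t|Y_t|^2$, the energy term $\intT{t}\Gamma_s|Z_s|^2\dif s$, and the two weight-generated terms $\beta\intT{t}\Gamma_s|Y_s|^2\dif s$ and $\lambda\intT{t}\Gamma_s|Y_s|^2\dif A_s$, while the right-hand side carries $\Gamma_T|\xi|^2$, the driver integrals $2\intT{t}\Gamma_sY_sg(s,Y_s,Z_s)\dif s$ and $2\intT{t}\Gamma_sY_sf(s,Y_s)\dif A_s$, and the martingale $-2\intT{t}\Gamma_sY_s\langle Z_s,\dif B_s\rangle$. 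The exponential weights are introduced precisely so that the two weight-generated terms can absorb the monotone growth produced on the right.

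Next I would estimate the two driver integrals. For the $\dif s$ integrand I decompose $g(s,Y_s,Z_s)=[g(s,Y_s,Z_s)-g(s,0,Z_s)]+[g(s,0,Z_s)-g(s,0,0)]+g(s,0,0)$ and use the monotonicity \ref{A:ItemGFMonotonicInY} (constant $\lambda_1$), the Lipschitz property \ref{A:ItemGLipschitzInZ} (constant $K$), and Young's inequality, obtaining
\[2Y_sg(s,Y_s,Z_s)\le(2\lambda_1+2K^2+1)|Y_s|^2+\tfrac12|Z_s|^2+|g(s,0,0)|^2.\]
Writing $f(s,Y_s)=[f(s,Y_s)-f(s,0)]+f(s,0)$ and using the monotonicity of $f$ (constant $\lambda_2$) with Young's inequality with a small parameter $\vp>0$ gives $2Y_sf(s,Y_s)\le(2\lambda_2+\vp)|Y_s|^2+\vp^{-1}|f(s,0)|^2$. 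Substituting these, the $\tfrac12|Z_s|^2$ is absorbed by the energy term, the choice $\beta:=2\lambda_1+2K^2+1$ cancels the $\dif s$ coefficient of $|Y_s|^2$, and for $\vp$ small the coefficient $\lambda-2\lambda_2-\vp$ of $\intT{t}\Gamma_s|Y_s|^2\dif A_s$ keeps the correct sign so that this term stays on the left. What remains is a pathwise bound of $\Gamma_t|Y_t|^2+\tfrac12\intT{t}\Gamma_s|Z_s|^2\dif s+(\lambda-2\lambda_2-\vp)\intT{t}\Gamma_s|Y_s|^2\dif A_s$ by $\Gamma_T|\xi|^2+\intT{t}\Gamma_s|g(s,0,0)|^2\dif s+\vp^{-1}\intT{t}\Gamma_s|f(s,0)|^2\dif A_s$ plus the stochastic integral.

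To extract the bound without the supremum I would localize by stopping times $\tau_n$ (say the first time $A_t+\intT[t]{0}|Z_s|^2\dif s$ exceeds $n$) so that the stochastic integral becomes a genuine martingale of zero mean; taking expectations and letting $n\to\infty$ by monotone convergence, using the exponential integrability $\EX[\me^{\lambda A_T}|\xi|^2]<\infty$ and \ref{A:ItemGFSquareIntegrability}, yields the estimate for $\EX[\Gamma_t|Y_t|^2+\intT{t}\Gamma_s|Z_s|^2\dif s+\intT{t}\Gamma_s|Y_s|^2\dif A_s]$. For the $\sup_t$ term I return to the pathwise inequality, take the supremum over $t$, and apply the Burkholder--Davis--Gundy inequality: its bracket $\intT{0}\Gamma_s^2|Y_s|^2|Z_s|^2\dif s$ is dominated, after Young's inequality, by $\tfrac12\sup_s\Gamma_s|Y_s|^2+C\intT{0}\Gamma_s|Z_s|^2\dif s$, so that $\tfrac12\EX[\sup_s\Gamma_s|Y_s|^2]$ can be absorbed on the left. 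Combining with the previous bound and folding $\me^{\beta T}$ into $C$ completes the argument.

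The main obstacle is the random measure $\dif A$: the usual Lebesgue-type tools do not apply to it, so the monotone growth along $\dif A$ must be absorbed \emph{solely} by the weight term $\lambda\intT{t}\Gamma_s|Y_s|^2\dif A_s$. This is exactly why the exponential weight $\me^{\lambda A_t}$, rather than a deterministic one, is indispensable, and it forces the delicate sign bookkeeping on the coefficient $\lambda-2\lambda_2-\vp$ (which is automatic when $\lambda_2\le0$ and otherwise requires $\lambda$ large enough). A subordinate technical point is justifying the true martingale property and passing $n\to\infty$, which relies essentially on the exponential integrability hypotheses and is what separates this GBSDE estimate from its classical BSDE counterpart.
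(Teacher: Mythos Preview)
Your approach is correct and is precisely the standard argument for this type of a priori estimate; the paper itself does not supply a proof of this lemma, as it is quoted as a preliminary result from \citet*{PardouxZhang1998PTRF}. Your use of the two-parameter weight $\Gamma_s=\me^{\beta s+\lambda A_s}$, the decomposition of the drivers via \ref{A:ItemGFMonotonicInY}--\ref{A:ItemGLipschitzInZ}, the localization to justify the martingale property, and the BDG step for the supremum all match the original argument in that reference.

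One point worth making explicit: you correctly flag that the sign condition $\lambda-2\lambda_2-\vp>0$ may fail when $\lambda_2>0$ and $\lambda$ is small, yet the lemma is stated ``for any $\lambda>0$''. In the Pardoux--Zhang original the estimate is in fact stated only for $\lambda$ exceeding a threshold depending on $\lambda_2$; the paper's phrasing here is slightly loose (either $C$ tacitly depends on $\lambda$ as well, or the intended reading is ``for $\lambda$ large enough''). In the applications within the paper (e.g., \cref{pro:EstimatesForSolutionsOfGBSDEsAppendix}) only a single sufficiently large $\lambda$ is ever needed, so this does not affect anything downstream. Your proof is complete once this caveat is acknowledged.
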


\begin{lem}\label{lem:AprioriEstimateForDifferenceOfYs}
	For each $i=1$, $2$, $A^i_\cdot\in\calA(0,T;\rtn)$, $\xi^i\in L^2(\Omega,\calF_T,\PR;\rtn)$, $g^i$ and $f^i$ satisfy \ref{A:GFContinuousInY} and \ref{A:GFConditionsInYZ}, $(Y^i_t,Z^i_t)_{t\in\tT}$ is a solution of GBSDE $(\xi^i,T,g^i+f^i\dif A^i)$. Denote $\hat\Psi:=\Psi^1-\Psi^2$ with $\Psi=\xi$, $A$, $g$, $f$, $Y_\cdot$ and $Z_\cdot$. Then for any $\lambda>0$ there exists a constant $C\geq 0$ such that
	\begin{align*}
	  \EX\bigg[\sup_{t\in\tT}\me^{\lambda k_t}|\hat Y_t|^2+\int^T_0\me^{\lambda k_t}|\hat Z_t|^2\dif t\bigg]
	  \leq{}&C\EX\bigg[\me^{\lambda k_T}|\hat \xi|^2+\int^T_0\me^{\lambda k_t}|\hat g(t,Y^1_t,Z^1_t)|^2\dif t\\
	  &+\int^T_0\me^{\lambda k_t}|\hat f(t,Y^1_t)|^2\dif A^2_t+\int^T_0\me^{\lambda k_t}|f^1(t,Y^1_t)|^2\dif |\hat A|_t\bigg],
	\end{align*} 
	where $k_t:=|\hat A|_t+A^2_t$ with $|\hat A|_t$ denoting the total variation of the process $\hat A_\cdot$ in the interval $[0,t]$.
\end{lem}
\begin{lem}\label{lem:ComparisonTheoremForSolutionsOfGBSDEs}
	Let assumptions of \cref{lem:AprioriEstimateForDifferenceOfYs} are in force, and $A^1_\cdot=A^2_\cdot$, $\xi^1\leq\xi^2$, $g^1(t,y,z)\leq g^2(t,y,z)$ and $f^1(t,y)\leq f^2(t,y)$ for all $(y,z)\in\rtn\times\rtn^d$, $\pts$. Then $\prs$, $Y^1_t\leq Y^2_t$ for all $t\in\tT$. 
\end{lem}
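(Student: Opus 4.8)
The plan is to adapt the weighted $L^2$ computation behind \cref{lem:AprioriEstimateForDifferenceOfYs}, but applied to the positive part $\hat Y_s^+:=\max\{\hat Y_s,0\}$ rather than to $\hat Y_s$ itself, so that the sign hypotheses on the data force $\hat Y^+\equiv0$. Since $A^1_\cdot=A^2_\cdot=:A_\cdot$ we have $\hat A_\cdot\equiv0$ and $k_t=A_t$, and $\hat Y_\cdot$ solves the GBSDE
\begin{equation*}
  \hat Y_s=\hat\xi+\int^T_s\Delta g_r\dif r+\int^T_s\Delta f_r\dif A_r-\int^T_s\langle\hat Z_r,\dif B_r\rangle,
\end{equation*}
with $\Delta g_r:=g^1(r,Y^1_r,Z^1_r)-g^2(r,Y^2_r,Z^2_r)$ and $\Delta f_r:=f^1(r,Y^1_r)-f^2(r,Y^2_r)$, where $\hat\xi=\xi^1-\xi^2\le0$, $\prs$, so that $\hat Y_T^+=\hat\xi^+=0$.

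First I would fix $\lambda>2\lambda_2$ and apply the It\^o--Tanaka formula to $\me^{\lambda A_s}(\hat Y_s^+)^2$ on $[t,T]$. Because $A_\cdot$ is of finite variation the only second-order contribution is $\one{\{\hat Y_s>0\}}|\hat Z_s|^2\dif s$, while the weight contributes $\lambda\me^{\lambda A_s}(\hat Y_s^+)^2\dif A_s$. Using $\hat Y_T^+=0$ and rearranging yields an identity in which $(\hat Y_t^+)^2$ together with the two nonnegative integrals $\int^T_t\lambda\me^{\lambda A_s}(\hat Y_s^+)^2\dif A_s$ and $\int^T_t\me^{\lambda A_s}\one{\{\hat Y_s>0\}}|\hat Z_s|^2\dif s$ is controlled by the driver differences weighted by $2\me^{\lambda A_s}\hat Y_s^+$, plus a stochastic integral.

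The decisive step is to estimate $\hat Y_s^+\Delta g_s$ and $\hat Y_s^+\Delta f_s$ on $\{\hat Y_s>0\}$. Splitting
\[\Delta g_s=\big[g^1(s,Y^1_s,Z^1_s)-g^2(s,Y^1_s,Z^1_s)\big]+\big[g^2(s,Y^1_s,Z^1_s)-g^2(s,Y^2_s,Z^1_s)\big]+\big[g^2(s,Y^2_s,Z^1_s)-g^2(s,Y^2_s,Z^2_s)\big],\]
the first bracket is $\le0$ by $g^1\le g^2$; the second, after multiplication by $\hat Y_s^+$, is $\le\lambda_1(\hat Y_s^+)^2$ by the monotonicity \ref{A:ItemGFMonotonicInY} (this is precisely why one-sided monotonicity in $y$ suffices, with no division by $\hat Y_s$); and the third is bounded by $K\hat Y_s^+|\hat Z_s|$ via the Lipschitz condition \ref{A:ItemGLipschitzInZ}. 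An analogous splitting gives $\hat Y_s^+\Delta f_s\le\lambda_2(\hat Y_s^+)^2$. The Young inequality $2K\hat Y_s^+|\hat Z_s|\le2K^2(\hat Y_s^+)^2+\tfrac12\one{\{\hat Y_s>0\}}|\hat Z_s|^2$ lets the $|\hat Z_s|^2$ term be absorbed into the left-hand side, while the choice $\lambda>2\lambda_2$ renders the $\dif A_s$ contribution nonnegative and hence droppable. Taking expectations removes the stochastic integral and leaves
\[\EXlr{\me^{\lambda A_t}(\hat Y_t^+)^2}\le C\int^T_t\EXlr{\me^{\lambda A_s}(\hat Y_s^+)^2}\dif s,\]
with a constant $C$ depending on $\lambda_1$ and $K$, so backward Gronwall gives $\EX[\me^{\lambda A_t}(\hat Y_t^+)^2]=0$ for every $t$; continuity of $\hat Y_\cdot$ then yields $Y^1_t\le Y^2_t$ for all $t\in\tT$, $\prs$.

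The main obstacle I anticipate is bookkeeping around the random measure $\dif A_s$ rather than any deep difficulty. One must notice that it is the exponential weight $\me^{\lambda A_s}$ (not an ordinary constant weight) that tames the monotonicity coefficient $\lambda_2$ appearing against $\dif A_s$, and one must justify that $\int^\cdot\me^{\lambda A_s}\hat Y_s^+\langle\hat Z_s,\dif B_s\rangle$ is a genuine martingale of zero expectation. The latter follows from the a priori bounds of \cref{lem:EstimateForGBSDEYZDrivenByA} applied to each $(Y^i_\cdot,Z^i_\cdot)$, which furnish $\EX[\sup_{s\in\tT}\me^{\lambda A_s}|\hat Y_s|^2]<\infty$ and $\EX[\int^T_0\me^{\lambda A_s}|\hat Z_s|^2\dif s]<\infty$, so a standard Burkholder--Davis--Gundy localization closes the argument.
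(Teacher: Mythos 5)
The paper never proves this lemma: it is one of the preliminary facts of \cref{sec:Preliminaries} quoted from the Pardoux--Zhang line of GBSDE results, so there is no in-paper argument to compare against, and your proof has to stand on its own --- which it does. The positive-part (It\^o--Tanaka) computation is correct and complete: $(y^+)^2$ is $C^1$ with Lipschitz derivative and a.e.\ second derivative $2\one{\{y>0\}}$, and its second-derivative measure has no atom, so no local-time term survives and the formula you invoke is legitimate; the three-way splitting of $\Delta g$ is exactly what makes the one-sided monotonicity \ref{A:ItemGFMonotonicInY} usable, since on $\{\hat Y_s>0\}$ one has $\hat Y_s^+=\hat Y_s$ while off that set every term vanishes; the Young step correctly keeps the indicator, so $\tfrac12\one{\{\hat Y_s>0\}}|\hat Z_s|^2$ is genuinely absorbed by the left-hand side; the choice $\lambda\geq 2\lambda_2$ disposes of both $\dif A_s$ integrals at once, which is precisely the role of the exponential weight; and the martingale justification is sound because $\me^{2\lambda A_s}(\hat Y_s^+)^2|\hat Z_s|^2=\big(\me^{\lambda A_s}(\hat Y_s^+)^2\big)\big(\me^{\lambda A_s}|\hat Z_s|^2\big)$, so the Burkholder--Davis--Gundy bound is finite by \cref{lem:EstimateForGBSDEYZDrivenByA} applied to each $(Y^i_\cdot,Z^i_\cdot)$. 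This is the standard route for comparison under merely monotone (non-Lipschitz-in-$y$) generators, where the linearization method of El Karoui--Peng--Quenez is unavailable because it would require dividing generator increments by $\hat Y$. One pedantic point you inherit from the statement rather than create: the hypothesis gives $f^1\leq f^2$ only $\pts$, whereas your proof (indeed any proof) needs this inequality on a set of full $\dif\PR\times\dif A_t$ measure; this is clearly the intended reading, since in the paper's applications the $f^i$ are continuous in $t$ and the inequality holds for every $t$.
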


\section{Representation theorem for generators of GBSDEs}\label{sec:RepresentationTheoremForGBSDEs}
In this section we want to show the representation theorem for generators of GBSDE \eqref{eq:GBSDEOneDimensional} by a method of random time change. As aforementioned in the Introduction, the method of random time change avoid the great difficulty brought from the random measure $\dif A_\cdot$. Our representation theorem is a nontrivial extension of the classical one, including \citet*{BriandCoquetHuMeminPeng2000ECP} and \citet*{Jiang2008AAP}. 

\subsection{The method of random time change}
In this subsection we will introduce the method of random time change. Before that we need the following lemma, which interprets a necessary and sufficient condition for absolute continuity. The first assertion in \cref{lem:ZareskyLemma} was obtained by M. A. Zaretsky (see, e.g., Ex 3.21 in \citet*{Leoni2009Book}), and the second one is an immediate consequence of the inverse function theorem. 
\begin{lem}\label{lem:ZareskyLemma}
	Let $h:[a,b]\mapsto\rtn$ be continuous and strictly increasing, then its inverse function $H:[h(a),h(b)]\mapsto\rtn$ is absolutely continuous if and only if the set $\{x\in [a,b]: \nabla h(x)=0\}$ has Lebesgue measure zero. Moreover, if $|\nabla h(x)|\geq\vp>0$ almost everywhere, then $\nabla H(x)\leq 1/\vp$ almost everywhere.
\end{lem}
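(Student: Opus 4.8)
The plan is to treat the two assertions separately, reducing the first (the Zaretsky characterization) to the Banach--Zaretsky theorem together with two classical measure-theoretic facts about monotone functions, and then deriving the quantitative bound from the first assertion.

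For the first assertion, observe that $H$ is continuous and strictly increasing, hence of bounded variation, so by the Banach--Zaretsky theorem $H$ is absolutely continuous if and only if it satisfies Luzin's condition (N), i.e.\ $H$ maps Lebesgue-null sets to Lebesgue-null sets. Since $h$ and $H$ are mutually inverse homeomorphisms, a null set in $[h(a),h(b)]$ is exactly a set of the form $h(S)$ with $S\subseteq[a,b]$ and $|h(S)|=0$, where $|\cdot|$ denotes Lebesgue measure, and then $H(h(S))=S$. Thus condition (N) for $H$ is equivalent to the implication $|h(S)|=0\Rightarrow|S|=0$ for every measurable $S$; that is, $H$ is absolutely continuous if and only if $h$ does not collapse any set of positive measure onto a null set.

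I would then invoke two standard lemmas. First, a Vitali covering argument shows that if $\nabla h$ exists and vanishes on a set $A$ then $|h(A)|=0$; I apply this to $E:=\{x:\nabla h(x)=0\}$. Second, for a continuous increasing function one has $\int_A\nabla h(x)\dif x\le|h(A)|$ for every measurable $A$ (the absolutely continuous part of the Lebesgue--Stieltjes measure induced by $h$ is dominated by the image measure). For the \emph{only if} direction I argue by contraposition: if $|E|>0$, the first lemma gives $|h(E)|=0$ while $H(h(E))=E$ has positive measure, so $H$ fails (N) and is not absolutely continuous. For the \emph{if} direction, suppose $|E|=0$ and $|h(S)|=0$; splitting $S=(S\cap E)\cup(S\setminus E)$, the first piece is null, while on $S\setminus E$ one has $\nabla h>0$, so the second lemma yields $\int_{S\setminus E}\nabla h\dif x\le|h(S)|=0$, forcing $|S\setminus E|=0$. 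Hence $|S|=0$ and $H$ is absolutely continuous.

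For the \emph{moreover} part, the hypothesis $|\nabla h|\ge\vp>0$ almost everywhere (with $\nabla h\ge0$ since $h$ increases) makes $E$ null, so by the first assertion $H$ is absolutely continuous and $\nabla H$ exists a.e. To get the bound I would argue by contradiction: set $F:=\{y:\nabla H(y)>1/\vp\}$ and suppose $|F|>0$. Absolute continuity of the increasing function $H$ gives $|H(F)|=\int_F\nabla H\dif y>|F|/\vp$, while the inequality lemma applied to $h$ together with $h(H(F))=F$ and $\nabla h\ge\vp$ gives $\vp|H(F)|\le\int_{H(F)}\nabla h\dif x\le|F|$, i.e.\ $|H(F)|\le|F|/\vp$, a contradiction. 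Therefore $\nabla H\le1/\vp$ a.e. The main obstacle is the measure-theoretic bookkeeping rather than any single deep step: the two lemmas (image of the zero-derivative set is null; the image-measure inequality for monotone maps) carry the weight, and some care is needed because $h$ may possess a singular increasing part. In particular, even when $\nabla h\ge\vp$ a.e.\ the map $h$ need not satisfy condition (N), so a naive pointwise inversion $\nabla H(h(x))=1/\nabla h(x)$ controls $\nabla H$ only off the possibly positive-measure set $h(\{x:\nabla h\text{ fails to exist}\})$; the integral/contradiction argument above sidesteps this difficulty cleanly.
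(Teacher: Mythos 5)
Your proof is correct, but note that the paper does not actually prove this lemma: it attributes the first assertion to M.~A.~Zaretsky, pointing to Ex.~3.21 in \citet*{Leoni2009Book}, and declares the second assertion ``an immediate consequence of the inverse function theorem.'' What you have done is reconstruct Zaretsky's characterization from scratch: Banach--Zaretsky reduces absolute continuity of $H$ to Luzin's condition (N), the homeomorphism property converts condition (N) for $H$ into the implication $|h(S)|=0\Rightarrow|S|=0$, and your two measure lemmas ($|h(\{\nabla h=0\})|=0$ via Vitali covering; $\int_A\nabla h\,\dif x\le|h(A)|$, i.e.\ the absolutely continuous part of the Stieltjes measure of $h$ is dominated by the image measure) settle the two directions. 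This is a valid, self-contained route; the only loose end is measurability: condition (N) quantifies over arbitrary null sets $N$, and $S=H(N)$ need not be measurable, so one should first enlarge $N$ to a Borel null set $N'$ and work with $S'=h^{-1}(N')$, which is Borel because $h$ is a homeomorphism --- with that one-line fix your restriction to measurable $S$ is harmless. More interestingly, for the ``moreover'' part your argument is more careful than the paper's one-liner: since $h$ need not satisfy condition (N), the pointwise inversion $\nabla H(h(x))=1/\nabla h(x)$ only controls $\nabla H$ on $h(G)$ with $G=\{x:\nabla h(x)\text{ exists and }\nabla h(x)\geq\vp\}$, and $h([a,b]\setminus G)$ can have positive measure (take $h(x)=x+c(x)$ with $c$ the Cantor function, where this exceptional set has measure one); so the bare appeal to the inverse function theorem is incomplete, and either your integral contradiction, or the observation that absolute continuity of $H$ (from the first assertion) forces $\nabla H=0$ a.e.\ on $h([a,b]\setminus G)$, is genuinely needed to conclude.
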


Next we will give some notions and well-known results about random time change, which can be find in \cite[Chapter 2]{IkedaWatanabe1989Book} and \cite[Chapter V]{RevuzYor2005Book}. 
\begin{dfn}\label{dfn:TimeChangeProcess}
	By a process of time change $\psi$ we mean any continuous $(\calF_t)$-adapted process $(\psi_t)_{t\geq 0}$ such that $\psi_0=0$, $\prs$, $t\mapsto \psi_t$ is strictly increasing and $\lim_{t\to\infty}\psi_t=\infty$. 
\end{dfn}
For a given process of time change $\psi$ and $t\in[0,\infty)$, we define, with the convention $\inf\{\varnothing\}=\infty$, 
\[\tau_s:=\inf\{t\geq 0:\psi_t>s\}.\]
We have the following assertions. $\tau_0=0$, $\tau_\cdot$ is continuous and strictly increasing and $\lim_{s\to\infty}\tau_s=\infty$. So the process $\tau$ is called a time change associated with $\psi$. Furthermore, $\tau_\cdot$ coincides with the inverse function of $\psi_\cdot$ in pathwise sense. The family $(\tau_s)_{s\geq 0}$ is a family of $(\calF_s)$-stopping times, and for every $t$, the random variable $\psi_t$ is a $(\calF_{\tau_s})$-stopping time. We also have that $\psi_t=\inf\{s\geq 0:\tau_s>t\}$, which indicates that $\psi_\cdot$ coincides with the inverse function of $\tau_\cdot$ in pathwise sense. Then $\psi_{\tau_t}=\tau_{\psi_t}=t$. 

Set $\widetilde\calF_t:=\calF_{\tau_t}$. Thus $(\widetilde{\calF}_t)_{t\geq 0}$ is a reference family on $(\Omega,\calF,\PR)$ and satisfies the usual conditions. Let $(X_t)_{t\geq 0}$ be a $(\calF_t)$-progressively measurable process and define $\widetilde X_t:=X_{\tau_t}$. Then $(\widetilde X_t)_{t\geq 0}$ is a $(\widetilde\calF_t)$-progressively measurable process. We call $X_{\tau_\cdot}$ the time change of $X$ by $\tau_\cdot$, and denote $T_\tau X:=X_\tau$. 

We denote $\calM^{c,loc}_{2}$ the family of all continuous locally square integrable martingales $(M_t)_{t\geq 0}$ relative to $(\calF_t)$ such that $M_0=0$, and $\widetilde\calM^{c,loc}_{2}$ the similar family relative to $\widetilde\calF_t$. Then the map $T_\tau(\cdot):\calM^{c,loc}_{2}\mapsto\widetilde\calM^{c,loc}_{2}$ is a bijection which preserves all structures in the space of $\calM^{c,loc}_{2}$. Also, the random time change commutes with Lebesgue-Stieltjes integrals and stochastic integrals, see the following lemma.
\begin{lem}\label{lem:TimeChangeFormula}
	Suppose that $\tau$ is a time change associated with $\psi$, $(H_t)_{t\geq 0}$ is $(\calF_t)$-progressively measurable process. We have
	\begin{enumerate}
		\item[(i)] If $(A_t)_{t\geq 0}$ is a continuous process of finite variation, then 
		\[\int^{\tau_s}_{0}H_r\dif A_r=\int^s_0 H_{\tau_r}\dif A_{\tau_r},\quad 0\leq s<\infty;\]
		\item[(ii)] If $(M_t)_{t\geq 0}$ belongs to $\calM^{c,loc}_2$ and $(H_t)_{t\geq 0}$ satisfies $\prs$, $\int^\infty_0H^2_t\dif \langle M\rangle_t<\infty$, then
		\[\int^{\tau_s}_0 H_r\dif M_r=\int^s_0 H_{\tau_r}\dif M_{\tau_r},\quad 0\leq s<\infty.\]
	\end{enumerate}
\end{lem}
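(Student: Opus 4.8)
The final statement is Lemma (lem:TimeChangeFormula), about time-change commuting with integrals. Let me think about how to prove this.\textbf{The plan is to prove each assertion by the standard two-step argument: first verify the identity for a simple (piecewise constant) integrand where both sides reduce to finite sums, and then pass to the general progressively measurable integrand by a monotone-class / approximation procedure.} For part (i), I would begin with the observation that since $\tau$ is continuous, strictly increasing, and equal to the pathwise inverse of $\psi$, the composition $r\mapsto A_{\tau_r}$ is again a continuous process of finite variation on every compact interval; indeed its total variation on $[0,s]$ equals the total variation of $A$ on $[0,\tau_s]$. Thus both integrals in (i) are ordinary Lebesgue--Stieltjes integrals and the claimed equality is, for each fixed $\omega$, nothing but the change-of-variables formula for a Stieltjes integral under a continuous increasing reparametrization of the time axis. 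The essential point is purely deterministic and pathwise: writing the change of variable $r=\tau_{r'}$ in $\int_0^{\tau_s}H_r\dif A_r$ sends the upper limit $\tau_s$ to $s$ (because $\psi_{\tau_s}=s$) and turns $\dif A_r$ into $\dif A_{\tau_{r'}}$, giving the right-hand side.

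\textbf{First I would establish (i) for step processes.} Fix a partition $0=t_0<t_1<\cdots<t_n$ and take $H_r=\sum_{k}\xi_k\one{(t_k,t_{k+1}]}(r)$ with $\xi_k$ being $\calF_{t_k}$-measurable. Then the left side is $\sum_k\xi_k\big(A_{\tau_s\wedge t_{k+1}}-A_{\tau_s\wedge t_k}\big)$. Using $\psi_{\tau_s}=s$ and the monotonicity of $\psi$, one rewrites $\tau_s\wedge t_{k}=\tau_{s\wedge\psi_{t_k}}$, which recasts the same sum as $\sum_k\xi_k\big(A_{\tau_{s\wedge\psi_{t_{k+1}}}}-A_{\tau_{s\wedge\psi_{t_k}}}\big)$, and this is exactly the right-hand integral $\int_0^s H_{\tau_r}\dif A_{\tau_r}$ because $H_{\tau_r}=\sum_k\xi_k\one{(\psi_{t_k},\psi_{t_{k+1}}]}(r)$ by the definition of $\tau$ as the inverse of $\psi$. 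Having matched step integrands, I would then take $H$ in $L^1(\dif|A|)$ pathwise, approximate it in the $\dif|A|$-norm by step processes, and invoke dominated convergence on both sides. Continuity and strict monotonicity of $\tau$ guarantee that the approximating sequence $H_{\tau_r}$ converges appropriately against $\dif A_{\tau_r}$, since the total-variation measures correspond under the reparametrization.

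\textbf{For part (ii) I would reduce to (i) via the structure-preserving property of $T_\tau$ recorded just above the lemma.} The key ingredients are that $T_\tau(\cdot):\calM^{c,loc}_2\to\widetilde\calM^{c,loc}_2$ is a bijection preserving the martingale structure, and that it is compatible with quadratic variation, i.e. $\langle M\rangle_{\tau_r}=\langle T_\tau M\rangle_r$. Again I would start with a step integrand $H$: for $H_r=\xi_k$ on $(t_k,t_{k+1}]$ with $\xi_k\in\calF_{t_k}$, both stochastic integrals are finite sums of the form $\sum_k\xi_k(M_{\tau_s\wedge t_{k+1}}-M_{\tau_s\wedge t_k})$, and the same reindexing $\tau_s\wedge t_k=\tau_{s\wedge\psi_{t_k}}$ used in (i) identifies the two sums, using that $(M_{\tau_r})_r=T_\tau M$ is adapted to $\widetilde\calF_r=\calF_{\tau_r}$ so that $\xi_k$ is $\widetilde\calF_{\psi_{t_k}}$-measurable and the right-hand expression is a genuine It\^o sum. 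For general $H$ with $\int_0^\infty H_t^2\dif\langle M\rangle_t<\infty$ almost surely, I would pass to the limit in the It\^o isometry (after localizing by stopping times to reduce to the square-integrable case): approximating $H$ by step processes $H^n$ so that $\int_0^{\tau_s}(H^n-H)^2\dif\langle M\rangle\to0$, the commutation $\langle M\rangle_{\tau_r}=\langle T_\tau M\rangle_r$ converts this into $\int_0^s(H^n_{\tau_r}-H_{\tau_r})^2\dif\langle T_\tau M\rangle_r\to0$, which yields $L^2$-convergence of both stochastic integrals to their respective limits, whence the identity persists.

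\textbf{The main obstacle I anticipate is measurability bookkeeping rather than analysis.} The delicate point is ensuring that at each stage the integrands and integrators are adapted to the correct filtration: on the left everything lives in $(\calF_t)$, but on the right the time-changed objects must be $(\widetilde\calF_r)=(\calF_{\tau_r})$-progressively measurable for the stochastic integral to even be defined. This is exactly what the preceding paragraph of the excerpt supplies (progressive measurability of $\widetilde X_r=X_{\tau_r}$ and the fact that $T_\tau$ sends $\calM^{c,loc}_2$ into $\widetilde\calM^{c,loc}_2$), so the burden is to invoke these facts cleanly, to justify that $\psi_{t_k}$ are $\widetilde\calF$-stopping times so the step integrand on the right is a legitimate simple predictable process, and to control the localization uniformly so that the two sequences of stopping times (one on each side) are compatible under the time change. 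Once the step-function case is matched and the isometry-based passage to the limit is set up, the remainder is routine.
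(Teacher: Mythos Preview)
Your proposal is correct and follows the standard textbook route (step integrands, then approximation via dominated convergence for (i) and the It\^o isometry plus localization for (ii)), with the appropriate measurability checks. The paper itself does not prove this lemma: it is stated as a known result, with the surrounding discussion citing \citet*{IkedaWatanabe1989Book} (Chapter~2) and \citet*{RevuzYor2005Book} (Chapter~V) for these facts, so there is no in-paper proof to compare against; your argument is essentially the one found in those references.
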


We proceed to present a crucial \cref{lem:TimeChangeToAbsoluteContinuity} which illustrates our method of time change.  The first assertion in \cref{lem:TimeChangeToAbsoluteContinuity} is an immediate consequence of \cref{lem:ZareskyLemma} and the Radon-Nikodym theorem, and it can be interpreted as that any continuous and increasing process can be transformed by a random time change to an absolutely continuous process; the second one gives some fine properties of time changed Brownian motions, see Proposition V.1.5 and Theorem V.1.6 in \citet*{RevuzYor2005Book} for more details.
\begin{lem}\label{lem:TimeChangeToAbsoluteContinuity}
	For each $(A_t)_{t\in\tT}\in\calA(0,T;\rtn)$, define $\psi:\Omega\times\tT\mapsto\rtn^+$ as $\psi_t:=A_t+t$. Then 
	\begin{enumerate}
		\item[(i)] the inverse function $\tau_\cdot$ of $\psi_\cdot$ exists with $\dif \tau_s=a_s\dif s$ and $\dif A_{\tau_s}=b_s\dif s$, where $(a_s,b_s)_{s\in[0,\psi_T]}$ is a pair of real-valued positive $(\calF_{\tau_s})$-progressively measurable processes with $a_\cdot+b_\cdot=1$ and $a_\cdot>0$;
		\item[(ii)] $(B_{\tau_s})_{s\geq 0}$ is a continuous $(\calF_{\tau_s})$-martingale, and enjoys the properties that for each $s\geq 0$, $\EX[B_{\tau_s}]=0$, $\EX[|B_{\tau_s}|^2]=\EX[d\tau_s]$ and the quadratic variation of the $i$th coordinate $\langle B^i_\tau\rangle_s=\langle B^i\rangle_{\tau_s}=\tau_s$, $\prs$.
	\end{enumerate}
\end{lem}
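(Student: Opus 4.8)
The plan is to treat the two assertions separately: (i) is a pathwise statement that I would derive from the deterministic theory of inverse functions together with \cref{lem:ZareskyLemma}, while (ii) is a soft consequence of the time-change calculus for continuous local martingales recalled before \cref{lem:TimeChangeFormula} and of optional sampling.

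For (i), I first fix $\omega$ and argue pathwise. Since $A_\cdot\in\calA(0,T;\rtn)$ is continuous and nondecreasing with $A_0=0$, the map $t\mapsto\psi_t=A_t+t$ is continuous and strictly increasing on $\tT$ with $\psi_0=0$; extending $A$ by $A_t:=A_T$ for $t>T$ turns $\psi$ into a genuine process of time change in the sense of \cref{dfn:TimeChangeProcess}, so the inverse $\tau_\cdot$ exists, is continuous and strictly increasing, carries the stopping-time structure recalled in the text (each $\tau_s$ is an $(\calF_t)$-stopping time and $\widetilde\calF_s=\calF_{\tau_s}$), and we restrict to $s\in[0,\psi_T]$. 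Being monotone, $\psi$ is differentiable $\dif t$-a.e. with $\nabla\psi_t=\nabla A_t+1\geq 1$, so $\{\nabla\psi=0\}$ is $\dif t$-null; \cref{lem:ZareskyLemma} then gives that $\tau_\cdot$ is absolutely continuous and, from $|\nabla\psi|\geq 1$, that $\nabla\tau_s\leq 1$ a.e. Writing $a_s:=\nabla\tau_s$ yields $\dif\tau_s=a_s\dif s$ with $a_s\in[0,1]$.

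The companion relation I would extract from the identity $\psi_{\tau_s}=s$, which reads $A_{\tau_s}+\tau_s=s$, i.e.\ $A_{\tau_s}=s-\tau_s$. Hence $s\mapsto A_{\tau_s}$ is absolutely continuous with $\dif A_{\tau_s}=(1-a_s)\dif s$, so setting $b_s:=1-a_s$ gives at once $a_\cdot+b_\cdot=1$ and $b_\cdot\geq0$. Progressive measurability of $(a_\cdot,b_\cdot)$ with respect to $(\widetilde\calF_s)=(\calF_{\tau_s})$ follows because $\tau_\cdot$ and $A_{\tau_\cdot}$ are $(\widetilde\calF_s)$-adapted and continuous and $a_\cdot,b_\cdot$ are their a.e.\ derivatives, hence measurable limits of adapted processes. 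The delicate point — which I expect to be the main obstacle — is the strict positivity $a_\cdot>0$: the upper bound $a_s\leq 1$ is free, but $a_s>0$ can be read off only at those $s$ for which $\tau_s$ is a differentiability point of $\psi$ with finite derivative, where the one-dimensional inverse function theorem gives $\nabla\tau_s=1/\nabla\psi(\tau_s)=1/(1+\nabla A_{\tau_s})>0$. Controlling the exceptional $s$-set requires genuine care, since $\psi$ (being merely continuous increasing, possibly singular) need not send $\dif t$-null sets to $\dif s$-null sets; it is precisely here that the $\dif t$-a.e.\ finiteness of $\nabla A$ and the Radon--Nikodym structure $\dif\tau_s=a_s\dif s$ must be combined to obtain $a_\cdot>0$ holding $\dif s$-a.e.

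For (ii), the assertions are the time-change transcriptions of the martingale properties of $B$. Since each coordinate $B^i\in\calM^{c,loc}_2$ and $T_\tau$ is a bijection of $\calM^{c,loc}_2$ onto $\widetilde\calM^{c,loc}_2$, the process $(B_{\tau_s})_{s\geq0}$ is a continuous $(\widetilde\calF_s)$-local martingale, and the quadratic-variation identity $\langle B^i_\tau\rangle_s=\langle B^i\rangle_{\tau_s}=\tau_s$ is the commutation of $T_\tau$ with $\langle\cdot\rangle$ applied to $\langle B^i\rangle_t=t$, i.e.\ Proposition V.1.5 and Theorem V.1.6 of \citet*{RevuzYor2005Book}. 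Because $\tau_s\leq T$ is a bounded stopping time and $\EX[\langle B^i\rangle_{\tau_s}]=\EX[\tau_s]\leq T<\infty$, optional sampling upgrades $B_{\tau_s}$ to a genuine $L^2$ $(\widetilde\calF_s)$-martingale, whence $\EX[B_{\tau_s}]=\EX[B_0]=0$; applying optional sampling to $(B^i_t)^2-\langle B^i\rangle_t$ gives $\EX[(B^i_{\tau_s})^2]=\EX[\langle B^i\rangle_{\tau_s}]=\EX[\tau_s]$, so that $\EX[|B_{\tau_s}|^2]=\sum_{i=1}^d\EX[(B^i_{\tau_s})^2]=\EX[d\,\tau_s]$. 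This settles (ii) modulo the cited references.
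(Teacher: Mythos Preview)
Your argument follows the paper's own indication exactly: for (i) the paper merely cites \cref{lem:ZareskyLemma} together with the Radon--Nikodym theorem, and for (ii) Proposition V.1.5 and Theorem V.1.6 of \citet*{RevuzYor2005Book}. Your derivation of $\dif\tau_s=a_s\dif s$ via \cref{lem:ZareskyLemma}, of $b_s=1-a_s$ from the identity $A_{\tau_s}=s-\tau_s$, and your optional-sampling computation for (ii) are all correct and more explicit than what the paper actually supplies.

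Your caution about the clause $a_\cdot>0$ is not merely warranted: for general $A\in\calA(0,T;\rtn)$ it is \emph{false}, so no amount of ``genuine care'' will close this gap. Take $T=1$ and $A$ the Cantor function; then $\psi_t=t+A_t$ maps the (Lebesgue-null) Cantor set $C$ onto a set $\psi(C)$ of Lebesgue measure $1$, and since $\tau$ is absolutely continuous and injective with $\tau(\psi(C))=C$, one gets $\int_{\psi(C)}a_s\,\dif s=|\tau(\psi(C))|=|C|=0$, forcing $a_s=0$ for $\dif s$-a.e.\ $s\in\psi(C)$. The paper gives no argument for $a_\cdot>0$ beyond the bare citation, and on inspection none of the downstream proofs (\cref{thm:RepresentationTheoremForGenerator}, \cref{thm:ExistenceResultForViscositySOlutionsOfHJBIEquations}) actually uses it: only $a_\cdot,b_\cdot\geq0$ with $a_\cdot+b_\cdot=1$ are invoked, together with the trivial identity $a_\cdot\equiv1$ in the interior case $x\in\calO$ where $\eta\equiv0$. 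The defect therefore lies in the lemma's statement, not in your proof; you should simply weaken $a_\cdot>0$ to $a_\cdot\geq0$ (strict positivity does hold under the additional hypothesis that $A$ is absolutely continuous, via $a_s=1/(1+\nabla A_{\tau_s})$, but that is not assumed here).
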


We denote by $\calH^2_M(0,T;\rtn^d)$ (or $\calH^2_M$) the set of $(\calF_t)$-progressively measurable $\rtn^d$-valued processes $(z_s)_{s\in\tT}$ such that $\|z\|^2_{\calH^2_M}:=\EX[\int^T_0|z_s|^2 \dif\langle M\rangle_t]<\infty$, where $M_\cdot\in \calM^{c,loc}_2$. Then $\calH^2_M$ is a Hilbert space. With the help of \cref{lem:TimeChangeFormula,lem:TimeChangeToAbsoluteContinuity}, it is straightforward to verify the following \cref{pro:EquiventBSDEAndGBSDEOfSolutions}, in which the same notations in \cref{lem:TimeChangeToAbsoluteContinuity} are adopted. This proposition interprets that GBSDE \eqref{eq:GBSDEOneDimensional} is equivalent to a stochastic interval BSDE driven by a continuous martingale. The novelty lies in that the $f(s,Y_s)\dif A_s$ is transformed to a part of the new generator of the BSDE driven by martingale. 
\begin{pro}\label{pro:EquiventBSDEAndGBSDEOfSolutions}
	Let $\xi\in L^2(\Omega,\calF_T,\PR;\rtn)$, $(A_t)_{t\in\tT}\in\calA(0,T;\rtn)$. If GBSDE \eqref{eq:GBSDEOneDimensional}, which is duplicated as follows,
	\begin{equation*}%\label{eq:OndDimensionalGBSDE}
	Y_t=\xi+\int^T_tg(s,Y_s,Z_s)\dif s+\int^T_tf(s,Y_s)\dif A_s-\int^T_t\langle Z_s,\dif B_s\rangle, \quad t\in\tT.
	\end{equation*} 
	admits a unique solution $(Y_t,Z_t)_{t\in\tT}$ in $\calS^2\times\calH^2$ with filtration $(\calF_t)_{t\geq 0}$, then the following BSDE 
	\begin{equation}\label{eq:TimeChangedGBSDEEquivBSDE}
	\widetilde Y_t=\xi+\int^{\psi_T}_t\big(a_sg(\tau_s,\widetilde Y_s,\widetilde Z_s)+b_sf(\tau_s,\widetilde Y_s)\big)\dif s-\int^{\psi_T}_t\langle\widetilde Z_s,\dif B_{\tau_s}\rangle,\quad t\in[0,\psi_T].
	\end{equation}	
	admits a unique solution $(\widetilde Y_s:=Y_{\tau_s},\widetilde Z_s:=Z_{\tau_s})_{s\in[0,\psi_T]}$ in $\calS^2\times\calH^2_{B_\tau}$ with filtration $(\calF_{\tau_s})_{s\geq 0}$. Conversely, if BSDE \eqref{eq:TimeChangedGBSDEEquivBSDE} admits a unique solution $(\widetilde Y_s,\widetilde Z_s)_{s\in[0,\psi_T]}$ in $\calS^2\times\calH^2_{B_\tau}$ with filtration $(\calF_{\tau_s})_{s\geq 0}$, then $(Y_t:=\widetilde Y_{\psi_t},Z_t:=\widetilde Z_{\psi_t})_{t\in[0,T]}$ in $\calS^2\times\calH^2$ with filtration $(\calF_t)_{t\geq 0}$ is the unique solution of GBSDE \eqref{eq:GBSDEOneDimensional}.
\end{pro}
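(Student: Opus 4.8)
The plan is to read \cref{pro:EquiventBSDEAndGBSDEOfSolutions} as a pure consequence of the commutation rules in \cref{lem:TimeChangeFormula,lem:TimeChangeToAbsoluteContinuity}: the time change $\tau_\cdot$ carries each term of the GBSDE onto the corresponding term of the BSDE \eqref{eq:TimeChangedGBSDEEquivBSDE}, and the inverse change $\psi_\cdot$ carries it back. Since the transformation $T_\tau:(Y_\cdot,Z_\cdot)\mapsto(Y_{\tau_\cdot},Z_{\tau_\cdot})$ and its inverse are mutually inverse bijections between the relevant solution spaces, it suffices to carry out one direction carefully and to note that the second is its mirror image; uniqueness on either side will then follow from uniqueness on the other.

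For the forward direction I would start from GBSDE \eqref{eq:GBSDEOneDimensional} evaluated at the stopping time $\tau_t$ with $t\in[0,\psi_T]$, using $\tau_{\psi_T}=T$ so that the terminal relation reads $\widetilde Y_{\psi_T}=Y_{\tau_{\psi_T}}=Y_T=\xi$. I then apply \cref{lem:TimeChangeFormula}(i) termwise to the two finite-variation integrals. Taking the driving finite-variation process to be the identity gives $\int^{\tau_{\psi_T}}_{\tau_t}g(s,Y_s,Z_s)\dif s=\int^{\psi_T}_tg(\tau_r,\widetilde Y_r,\widetilde Z_r)\dif\tau_r$, and \cref{lem:TimeChangeToAbsoluteContinuity}(i) rewrites $\dif\tau_r=a_r\dif r$, producing the term $a_rg(\tau_r,\widetilde Y_r,\widetilde Z_r)\dif r$. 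Taking the driving process to be $A_\cdot$ itself gives $\int^{\tau_{\psi_T}}_{\tau_t}f(s,Y_s)\dif A_s=\int^{\psi_T}_tf(\tau_r,\widetilde Y_r)\dif A_{\tau_r}$, and $\dif A_{\tau_r}=b_r\dif r$ produces the term $b_rf(\tau_r,\widetilde Y_r)\dif r$. Finally \cref{lem:TimeChangeFormula}(ii), applied to each coordinate of $B$, turns the martingale integral into $\int^{\psi_T}_t\langle\widetilde Z_r,\dif B_{\tau_r}\rangle$. Assembling the three computations reproduces \eqref{eq:TimeChangedGBSDEEquivBSDE} exactly.

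It remains to check that $(\widetilde Y_\cdot,\widetilde Z_\cdot)$ lands in the correct spaces with the correct filtration. Progressive measurability of $s\mapsto Y_{\tau_s}$ and $s\mapsto Z_{\tau_s}$ with respect to $(\calF_{\tau_s})$ is the measurability statement already recorded in \cref{sec:Preliminaries}. For the norms, $\sup_{s\in[0,\psi_T]}|\widetilde Y_s|=\sup_{r\in\tT}|Y_r|$ because $\tau_\cdot$ is a continuous bijection of $[0,\psi_T]$ onto $\tT$, so $\widetilde Y_\cdot\in\calS^2$; and using $\dif\langle B_\tau\rangle_s=\dif\tau_s$ from \cref{lem:TimeChangeToAbsoluteContinuity}(ii) together with \cref{lem:TimeChangeFormula}(i) applied to $H_r=|Z_r|^2$ gives $\EX[\int^{\psi_T}_0|\widetilde Z_s|^2\dif\tau_s]=\EX[\int^T_0|Z_r|^2\dif r]$, i.e. $\|\widetilde Z\|^2_{\calH^2_{B_\tau}}=\EX[\int^T_0|Z_r|^2\dif r]$. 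Thus $T_\tau$ is an isometry onto the target space. The converse direction is identical after replacing $\tau_\cdot$ by its inverse $\psi_\cdot$ and using $\psi_{\tau_s}=s$: the same three commutation rules, read from right to left, send \eqref{eq:TimeChangedGBSDEEquivBSDE} back to \eqref{eq:GBSDEOneDimensional}, and the isometry is inverted. Uniqueness in each direction is then automatic, since two solutions on one side would be transported by the relevant bijection to two solutions on the other, contradicting the assumed uniqueness there.

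The computation is mechanical, so the real care goes into the bookkeeping around the random horizon $\psi_T$ and the filtration switch. I expect the main obstacle to be verifying the measurability and norm-equivalence claims rigorously under the stochastic terminal time: confirming that the stochastic-interval integrals are well defined, that $\tau_t$ and $\psi_t$ are genuine $(\calF_t)$- and $(\calF_{\tau_t})$-stopping times respectively, and that square integrability is preserved in both directions. One should also record that $a_\cdot>0$ holds strictly while $b_\cdot$ may vanish (on time intervals where $A_\cdot$ is locally constant); this causes no difficulty, since $a_\cdot>0$ alone keeps $\tau_\cdot$ strictly increasing and guarantees that the inverse change by $\psi_\cdot$ recovers all of $\tT$.
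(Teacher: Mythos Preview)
Your proposal is correct and follows precisely the route the paper itself indicates: the paper does not give a written proof but states that the proposition is ``straightforward to verify'' from \cref{lem:TimeChangeFormula,lem:TimeChangeToAbsoluteContinuity}, and your argument is exactly that verification, carrying each integral through the time change termwise and checking the norm equivalences. The only quibble is that the progressive measurability of $\Psi_{\tau_\cdot}$ with respect to $(\calF_{\tau_s})$ is recorded in the time-change subsection (just before \cref{lem:TimeChangeFormula}) rather than in \cref{sec:Preliminaries}, but this does not affect the substance.
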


\subsection{Representation theorem for generators}

Fix a triplet $(t,y,z)\in[0,T)\times\rtn\times\rtn^d$, and choose a $\vp>0$ with $\vp\leq T-t$. For a given $(A_t)_{t\in\tT}\in\calA(0,T;\rtn)$, define $\psi:\Omega\times[t,T]\mapsto\rtn^+$ as 
\begin{equation}\label{eq:DfnPsiInRepresentationTheorem}
\psi_s:=A_s+s-A_t.
\end{equation}
It is easily to see that $\psi_\cdot$ enjoys all the properties in \cref{dfn:TimeChangeProcess}. Then the corresponding conclusions in \cref{lem:TimeChangeToAbsoluteContinuity} applies. We denote the inverse function of $\psi_\cdot$ by $\tau_\cdot$ and utilize the same notations of \cref{lem:TimeChangeToAbsoluteContinuity}. Under assumptions \ref{A:GFContinuousInY} -- \ref{A:GFConditionsInYZ}, GBSDE \eqref{eq:GBSDEOneDimensional} admits a unique solution. Since $\tau_\cdot$ belongs to $\calT_{t,T}$, similar arguments to Theorem 12 in \citet*{XiaoFan2017Stochastics} yield that the following GBSDE,
\begin{equation}\label{eq:GBSDERepresentationDTPlusVp}
Y_s=y+\langle z,B_{\tau_{t+\vp}}-B_t\rangle+\int^{\tau_{t+\vp}}_sg(r,Y_r,Z_r)\dif r-\int^{\tau_{t+\vp}}_sf(r,Y_r)\dif A_r-\int^{\tau_{t+\vp}}_s\langle Z_r,\dif B_r\rangle,\quad s\in[t,\tau_{t+\vp}],
\end{equation}
admits a unique solution in $\calS^2\times\calH^2$ with filtration $(\calF_t)_{t\geq 0}$,
\[\big(Y_s(y+\langle z,B_{\tau_{t+\vp}}-B_t\rangle,\tau_{t+\vp},g+f\dif A),Z_s(y+\langle z,B_{\tau_{t+\vp}}-B_t\rangle,\tau_{t+\vp},g+f\dif A)\big)_{s\in[t,\tau_{t+\vp}]},\]
which is also denoted by $(Y^\vp_s,Z^\vp_s)_{s\in[t,\tau_{t+\vp}]}$.  Then we have the following representation theorem for generator of GBSDEs.
\begin{thm}[Representation Theorem]
	\label{thm:RepresentationTheoremForGenerator}
	For given $(A_t)_{t\in\tT}\in\calA(0,T;\rtn)$, assume that \ref{A:GFContinuousInY} -- \ref{A:GFConditionsInYZ} hold, define $\psi_\cdot$ as in \eqref{eq:DfnPsiInRepresentationTheorem} and denote by its inverse function $\tau_\cdot$. Then there exist a pair of real-valued positive $(\calF_{\tau_s})$-progressively measurable processes $a_\cdot$ and $b_\cdot$ with $a_\cdot+b_\cdot=1$ and $a_\cdot>0$ such that for each $y\in\rtn$, $z\in\rtn^d$, $1\leq p<2$ and $\dte$ $t\in[0,T)$,
	\begin{gather}
	L^p-\lim_{\vp\to 0^+}\frac{1}{\vp}
	\bigg((Y^\vp_t-y)-\EX\bigg[\int^{t+\vp}_t[a_rg(\tau_r,y,z)-b_rf(\tau_r,y)]\dif r\bigg|\calF_{\tau_t}\bigg]\bigg)=0,\label{eq:RepThmForGBSDEIntegralType}\\
	a_tg(t,y,z)+b_tf(t,y)=L^p-\lim_{\vp\to 0^+}\frac{1}{\vp}
	\left[
	Y^\vp_t%(y+\langle z,B_{\tau_{t+\vp}}-B_t\rangle,\tau_{t+\vp},g+f\dif A)
	-y
	\right].\label{eq:RepThmForGBSDEIdentityType}
	\end{gather}
\end{thm}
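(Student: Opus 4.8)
The plan is to reduce the representation problem for the GBSDE to one for a BSDE driven by the continuous martingale $N_\cdot:=B_{\tau_\cdot}$, in which the random measure $\dif A$ has been absorbed into a Lebesgue measure. Applying \cref{pro:EquiventBSDEAndGBSDEOfSolutions} to the localized equation \eqref{eq:GBSDERepresentationDTPlusVp} and recalling from \eqref{eq:DfnPsiInRepresentationTheorem} that $\psi_t=t$, so that $\tau_t=t$ and $N_t=B_t$, the time-changed pair $(\widetilde Y_s:=Y^\vp_{\tau_s},\widetilde Z_s:=Z^\vp_{\tau_s})_{s\in[t,t+\vp]}$ solves a martingale-driven BSDE on $[t,t+\vp]$ with terminal value $y+\langle z,N_{t+\vp}-N_t\rangle$ and combined generator $G_r(\tilde y,\tilde z):=a_rg(\tau_r,\tilde y,\tilde z)+b_rf(\tau_r,\tilde y)$, which by \ref{A:GFConditionsInYZ} is Lipschitz in $\tilde z$ with constant $a_rK\le K$. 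Since $\widetilde Y_t=Y^\vp_{\tau_t}=Y^\vp_t$ and $N$ is an $(\calF_{\tau_s})$-martingale, conditioning on $\calF_{\tau_t}$ annihilates both the stochastic integral and the term $\langle z,N_{t+\vp}-N_t\rangle$, which leaves
\[
  \frac{1}{\vp}(Y^\vp_t-y)=\EX\bigg[\frac{1}{\vp}\int^{t+\vp}_tG_r(\widetilde Y_r,\widetilde Z_r)\dif r\,\bigg|\,\calF_{\tau_t}\bigg].
\]
This identity already delivers \eqref{eq:RepThmForGBSDEIntegralType} once the integrand is frozen at $(y,z)$, i.e. once $G_r(\widetilde Y_r,\widetilde Z_r)$ is replaced by $G_r(y,z)$.

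To pass from this integral form to the pointwise identity \eqref{eq:RepThmForGBSDEIdentityType} I would invoke a conditional Lebesgue differentiation argument: for $\dte$ $t$,
\[
  \EX\bigg[\frac{1}{\vp}\int^{t+\vp}_tG_r(y,z)\dif r\,\bigg|\,\calF_{\tau_t}\bigg]\longrightarrow G_t(y,z)=a_tg(t,y,z)+b_tf(t,y)
\]
in $L^p$ (using $\tau_t=t$). This is exactly the step that the random time change makes available: the troublesome term $\int f\dif A$ has become $\int b_rf(\tau_r,\cdot)\dif r$ against Lebesgue measure, so the $\dif r$-version of the Lebesgue point lemma (cf. Proposition 2.2 in \citet*{Jiang2008AAP}) applies, whereas, as observed in the Introduction, the corresponding statement fails for $\dif A$.

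The core of the argument, and what I expect to be the main obstacle, is the frozen-integrand replacement, i.e. showing
\[
  \frac{1}{\vp}\EX\int^{t+\vp}_t|G_r(\widetilde Y_r,\widetilde Z_r)-G_r(y,z)|\dif r\longrightarrow0.
\]
The $\tilde y$-contribution is handled by continuity in $y$ together with $\EX[\sup_s|\widetilde Y_s-y|^2]\to0$, the latter coming from the time-changed analogue of the a priori estimate \cref{lem:EstimateForGBSDEYZDrivenByA}. The $\tilde z$-contribution is bounded by $\tfrac{K}{\vp}\EX\int^{t+\vp}_ta_r|\widetilde Z_r-z|\dif r$, and here the key quantitative step is
\[
  \EX\int^{t+\vp}_ta_r|\widetilde Z_r-z|^2\dif r=o(\vp).
\]
I would obtain this by comparing $(\widetilde Y,\widetilde Z)$ with $(\bar Y_s:=y+\langle z,N_s-N_t\rangle,\bar Z_s:=z)$, whose difference solves a martingale-driven BSDE with zero terminal value; Itô's formula yields $\EX\int a_r|\widetilde Z_r-z|^2\dif r\le 2(\EX\int|\widetilde Y_r-\bar Y_r|^2\dif r)^{1/2}(\EX\int|G_r(\widetilde Y_r,\widetilde Z_r)|^2\dif r)^{1/2}$, where the first factor is $o(\sqrt\vp)$ because $\EX[\sup_s|\widetilde Y_s-\bar Y_s|^2]\to0$ (via \cref{lem:AprioriEstimateForDifferenceOfYs}) while the second is $O(\sqrt\vp)$. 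A final Cauchy–Schwarz step with $\EX[\,\tau_{t+\vp}-\tau_t\,]=\EX\int^{t+\vp}_ta_r\dif r=O(\vp)$ then closes the $\tilde z$-estimate.

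The genuine difficulty in this last estimate is that the quadratic variation $\dif\langle N\rangle_r=a_r\dif r$ may degenerate, since $a_r$ can be arbitrarily small when $A$ increases rapidly; consequently one cannot hope for $\widetilde Z_r\to z$ in the unweighted $L^2(\dif r)$ sense. What rescues the argument is that the Lipschitz constant of $G$ in $\tilde z$ carries precisely the weight $a_r$, so the $\tilde z$-Lipschitz error and the quantity controlled by the a priori estimate are weighted by the \emph{same} $a_r$ and the degeneracy cancels. The remaining technical checks are that $N=B_\tau$ inherits enough Brownian structure in the filtration $(\calF_{\tau_s})$ — the martingale property, $\langle N^i,N^j\rangle_s=\delta_{ij}\tau_s$, and the vanishing of the relevant conditional expectations — so that the a priori estimates transport through the time change; these are supplied by \cref{lem:TimeChangeToAbsoluteContinuity}(ii). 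The restriction to $1\le p<2$, rather than $p=2$, is dictated by the conditional Lebesgue lemma used in the second step.
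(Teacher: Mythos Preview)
Your overall plan---time change to convert $\dif A$ into Lebesgue measure, obtain a martingale-driven BSDE with combined generator $G_r$, derive an $o(\vp)$ a priori estimate on the recentered solution, and conclude via a Lebesgue-point argument---is exactly the paper's route. Two of your steps, however, are underdeveloped in ways that matter.

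First, the a priori estimate. Your Cauchy--Schwarz bound
\[
\EX\!\int_t^{t+\vp}\! a_r|\widetilde Z_r-z|^2\dif r\;\le\;2\Big(\EX\!\int|\widetilde Y_r-\bar Y_r|^2\dif r\Big)^{1/2}\Big(\EX\!\int|G_r(\widetilde Y_r,\widetilde Z_r)|^2\dif r\Big)^{1/2}
\]
needs the second factor to be $O(\sqrt\vp)$, i.e.\ $\EX\int|G_r(\widetilde Y_r,\widetilde Z_r)|^2\dif r=O(\vp)$. But $G$ has linear growth in $\widetilde Z_r$, so this requires a bound on $\EX\int a_r|\widetilde Z_r|^2\dif r$ of order $\vp$, which is essentially what you are trying to prove; invoking \cref{lem:AprioriEstimateForDifferenceOfYs} for the first factor does not help either, since its right side again involves the full generator evaluated along the solution. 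The paper breaks this circle by applying It\^o's formula with an exponential weight to the shifted pair $(\overline Y,\overline Z):=(\widetilde Y-\bar Y,\widetilde Z-z)$ and using the \emph{monotonicity} \ref{A:GFConditionsInYZ}\ref{A:ItemGFMonotonicInY} to strip the $(\overline Y,\overline Z)$-dependence from the cross term; the resulting bound involves only $\overline g(r,0,0)$ and yields directly
\[
\frac{1}{\vp}\,\EX\Big[\sup_r|\overline Y^\vp_r|^2+\int_t^{t+\vp}|\overline Z^\vp_r|^2\dif\tau_r\Big]\;\le\;C\,\EX\!\int_t^{t+\vp}|\overline g(r,0,0)|^2\dif r\;\longrightarrow\;0.
\]

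Second, the $\tilde y$-contribution. Under \ref{A:GFContinuousInY}--\ref{A:GFConditionsInYZ} the generators are merely continuous and monotone in $y$, not Lipschitz, so ``continuity in $y$ together with $\EX[\sup_s|\widetilde Y_s-y|^2]\to0$'' does not give $\frac{1}{\vp}\EX\int|G_r(\widetilde Y_r,z)-G_r(y,z)|\dif r\to0$: the modulus of continuity is random and $r$-dependent. The paper supplies the missing mechanism via \cref{pro:ApproximationForOverlineG} (a Fan--Jiang--Xu approximation), which produces processes $\overline g^n_\cdot$ with $\EX|\overline g^n_t|^2\to0$ and $|\overline g(r,y',0)-\overline g(r,0,0)|\le 2n|y'|+\overline g^n_r$; one then passes to the double limit $\vp\to0$ followed by $n\to\infty$, controlling the $\overline g^n$-term with Jiang's Lebesgue lemma. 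Your sketch omits this ingredient.
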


\begin{proof}
	Let all the assumptions hold. Fix a triplet $(t,y,z)\in[0,T)\times\rtn\times\rtn^d$ and choose a $\vp>0$ with $\vp\leq T-t$. To simply notations, we denote the solution of GBSDE \eqref{eq:GBSDERepresentationDTPlusVp} by  $(Y^\vp_s,Z^\vp_s)_{s\in[t,\tau_{t+\vp}]}$. We make a time change to all the processes involved in GBSDE \eqref{eq:GBSDERepresentationDTPlusVp} by setting $\widetilde\Psi_\cdot:=\Psi_{\tau_\cdot}$ with $\Psi=Y^\vp$, $Z^\vp$, $B$. Then \cref{pro:EquiventBSDEAndGBSDEOfSolutions} indicates that 
	\begin{equation*}
	\widetilde Y^\vp_s=y+\langle z,\widetilde B_{t+\vp}-\widetilde B_t\rangle+\int^{t+\vp}_s\big(a_rg(\tau_r,\widetilde Y^\vp_r,\widetilde Z^\vp_r)+b_rf(\tau_r,Y^\vp_r)\big)\dif r-\int^{t+\vp}_s\langle \widetilde Z^\vp_r,\dif\widetilde B_r\rangle,\quad s\in[t,t+\vp],
	\end{equation*}
	where $\dif \tau_r=a_r\dif r$, $\dif A_{\tau_r}=b_r\dif r$ and $a_\cdot+b_\cdot=1$ with $a_\cdot>0$. We set $\overline Y^\vp_\cdot:=\widetilde Y^\vp_\cdot-y-\langle z,\widetilde B_\cdot-\widetilde B_t\rangle$ and $\overline Z^\vp_\cdot:=\widetilde Z^\vp_\cdot-z$, then for each $s\in[t,t+\vp]$,
	\begin{equation}\label{eq:BSDEOverlineYZDrivenByBDr}
	\overline Y^\vp_s=\int^{t+\vp}_s\overline g(r,\overline Y^\vp_r,\overline Z^\vp_r)\dif r-\int^{t+\vp}_s\langle \overline Z^\vp_r,\dif\widetilde B_r\rangle,
	\end{equation}
	where for each $y'\in\rtn$ and $z'\in\rtn^d$, we write
	\[\overline g(r,y',z'):=a_rg(\tau_r,y'+y+\langle z,\widetilde B_r-B_t\rangle,z'+z)+b_rf(\tau_r,y'+y+\langle z,\widetilde B_r-B_t\rangle).\] 
	It is evident that \ref{A:GFContinuousInY} and \ref{A:ItemGLipschitzInZ} are fulfilled by $\overline g$, i.e., $\overline g(r,y',z')$ is continuous in $y'$ and Lipschitz continuous in $z'$, the Lipschitz constant is $Ka_r$. Moreover, it follows from \ref{A:ItemGFMonotonicInY} that $\prs$, for each $y_1$, $y_2\in\rtn$ and $z'\in\rtn^d$,
	\begin{equation}\label{eq:OverlineGMonotonicityCondition}
	\langle y_1-y_2,\overline g(r,y_1,z')-\overline g(r,y_2,z')\rangle
	\leq \lambda_1 a_r|y_1-y_2|^2+\lambda_2 b_r|y_1-y_2|^2,
	\end{equation}
	which means that \ref{A:ItemGFMonotonicInY} holds for $\overline g$, i.e., the monotonicity condition holds for $\overline g$. Next, by the linear growth of $g$ and $f$ in \ref{A:ItemGFLinearInY}, we deduce that for each $y'\in\rtn$ and $z'\in\rtn^d$,
	\begin{align*}
	|\overline g(r,y',z')|\leq a_r|g_{\tau_r}|+b_r|f_{\tau_r}|+2K(|y'|+|y|)+2K|z||\widetilde B_r-B_t|+K|z'|+K|z|.
	\end{align*}
	Then (i) in \cref{lem:TimeChangeToAbsoluteContinuity} and \ref{A:ItemGFLinearInY} yield that 
	\begin{align*}
	\EX\bigg[\int^{t+\vp}_ta_r|g_{\tau_r}|^2\dif r\bigg]=\EX\bigg[\int^{t+\vp}_t|g_{\tau_r}|^2\dif \tau_r\bigg]=\EX\bigg[\int^{\tau_{t+\vp}}_t|g_r|^2\dif r\bigg]\leq \EX\bigg[\int^T_0|g_r|^2\dif r\bigg]<\infty.
	\end{align*}
	With analogous arguments we can obtain that $\EX[\int^{t+\vp}_tb_r|f_{\tau_r}|^2\dif r]<\infty$. Moreover, it follows from the fact $\tau_\cdot\leq T$ and (ii) in \cref{lem:TimeChangeToAbsoluteContinuity} that
	\begin{equation*}
	\EXlr{\int^{t+\vp}_t|\widetilde B_r-B_t|^2\dif r}\leq 2\int^{t+\vp}_t\EXlr{|\widetilde B_r|^2}\dif r+2t\vp\leq 2\vp(dT+t)<\infty.
	\end{equation*}
	Thereby, we know that \ref{A:ItemGFLinearInY} also holds for $\overline g$. 
	
	Next we will build an estimate for solutions of BSDE \eqref{eq:BSDEOverlineYZDrivenByBDr}. For a constant $\lambda\geq 0$ which will be chosen later, It\^o's formula to $\me^{\lambda r}|\overline Y^\vp|^2$ yields that, for each $s\in[t,t+\vp]$,
	\begin{align*}
	&\me^{\lambda s}|\overline Y^\vp_s|^2+\lambda\int^{t+\vp}_s\me^{\lambda r}|\overline Y_r^\vp|^2\dif r+\int^{t+\vp}_s\me^{\lambda r}|\overline Z^\vp_r|^2\dif \tau_r\\
	&\leq 2\int^{t+\vp}_s\me^{\lambda r}\langle \overline Y^\vp_r,\overline g(r,\overline Y^\vp_r,\overline Z^\vp_r)\rangle \dif r-2\int^{t+\vp}_s\me^{\lambda r}\langle \overline Y^\vp_r,\overline Z^\vp_r\dif \widetilde B_r\rangle.
	\end{align*}
	The inner product including $\overline g$ can be enlarged by \eqref{eq:OverlineGMonotonicityCondition} as follows, 
	\begin{align*}
	2\langle \overline Y^\vp_r,\overline g(r,\overline Y^\vp_r,\overline Z^\vp_r)\rangle
	&\leq 2\lambda_1a_r|\overline Y^\vp_r|^2+2\lambda_2b_r|\overline Y^\vp_r|^2+2a_rK|\overline Y^\vp_r||\overline Z^\vp_r|+2|\overline Y^\vp_r||\overline g(r,0,0)|\\
	&\leq 2(\lambda_1+\lambda_2+K^2)|\overline Y^\vp_r|^2+\frac{a_r}{2}|\overline Z^\vp_r|^2+2|\overline Y^\vp_r||\overline g(r,0,0)|.
	\end{align*}
	Thus, by choosing $\lambda\geq 2(\lambda_1+\lambda_2+K^2)$ and the fact $\dif \tau_r=a_r\dif r$, we deduce that for each $s\in[t,t+\vp]$,
	\begin{align*}
	\EXlr{\left.\int^{t+\vp}_s\me^{\lambda r}|\overline Z^\vp_r|^2\dif \tau_r\right|\calF_{\tau_s}}
	\leq 2\EXlr{\left.\int^{t+\vp}_s\me^{\lambda r}|\overline Y^\vp_r||\overline g(r,0,0)|\dif r\right|\calF_{\tau_s}}.
	\end{align*}
	Moreover, we have that 
	\begin{align*}
	\sup_{r\in[t,t+\vp]}\me^{\lambda r}|\overline Y^\vp_r|^2\leq 2\int^{t+\vp}_t\me^{\lambda r}|\overline Y^\vp_r||\overline g(r,0,0)|\dif r+2\sup_{s\in[t,t+\vp]}\left|\int^{s}_t\me^{\lambda r}\langle \overline Y^\vp_r,\overline Z^\vp_r\dif \widetilde B_r\rangle\right|.
	\end{align*}
	Then it follows from Burkholder-Davis-Gundy's inequality and the basic inequality $2ab\leq 2a^2+b^2/2$ that there exists a generic constant $C\geq 0$, which will change from line to line, such that  
	\begin{align*}
	\EX\bigg[\sup_{r\in[t,t+\vp]}\me^{\lambda r}|\overline Y^\vp_r|^2\bigg|\calF_{\tau_t}\bigg]\!
	&\leq2\EX\bigg[\!\int^{t+\vp}_t\!\me^{\lambda r}|\overline Y^\vp_r||\overline g(r,0,0)|\dif r\bigg|\calF_{\tau_t}\bigg]\!
	+C\EX\bigg[\Big(\int^{t+\vp}_t\!\me^{2\lambda r}|\overline Y^\vp_r|^2|\overline Z^\vp_r|^2\dif \tau_r\Big)^{1/2}\bigg|\calF_{\tau_t}\bigg]\\
	&\hspace{-3.5cm}\leq 2\EX\bigg[\!\int^{t+\vp}_t\!\me^{\lambda r}|\overline Y^\vp_r||\overline g(r,0,0)|\dif r\bigg|\calF_{\tau_t}\bigg]
	+C^2\EX\bigg[\int^{t+\vp}_t\me^{\lambda r}|\overline Z^\vp_r|^2\dif \tau_r\bigg|\calF_{\tau_t}\bigg]+\frac{1}{4}\EX\bigg[\sup_{s\in[t,t+\vp]}\me^{\lambda s}|\overline Y^\vp_s|^2\bigg|\calF_{\tau_t}\bigg].
	\end{align*}
	Immediately, we obtain that 
	\begin{align*}
	\EX\bigg[\sup_{r\in[t,t+\vp]}\me^{\lambda r}|\overline Y^\vp_r|^2\bigg|\calF_{\tau_t}\bigg]+\EX\bigg[\int^{t+\vp}_t\me^{\lambda r}|\overline Z^\vp_r|^2\dif \tau_r\bigg|\calF_{\tau_t}\bigg]
	\leq C\EX\bigg[\int^{t+\vp}_t\me^{\lambda r}|\overline Y^\vp_r||\overline g(r,0,0)|\dif r\bigg|\calF_{\tau_t}\bigg].
	\end{align*}   
	Finally, the right hand side term of the previous inequality can be estimated as follows, 
	\begin{align*}
	C\EX\bigg[\!\int^{t+\vp}_t\!\!\me^{\lambda r}|\overline Y^\vp_r||\overline g(r,0,0)|\dif r\bigg|\calF_{\tau_t}\!\bigg]\!
	\leq\! \frac{1}{4}\EX\bigg[\sup_{s\in[t,t+\vp]}\me^{\lambda s}|\overline Y^\vp_s|^2\bigg|\calF_{\tau_t}\!\bigg]\!
	+\!C^2\EX\bigg[\Big(\!\int^{t+\vp}_t\!\!\me^{\lambda r/2}|\overline g(r,0,0)|\!\dif r\Big)^2\bigg|\calF_{\tau_t}\!\bigg],
	\end{align*}
	which indicates the following estimate for solutions of BSDE \eqref{eq:BSDEOverlineYZDrivenByBDr},
	\begin{align*}
	\EX\bigg[\sup_{r\in[t,t+\vp]}\me^{\lambda r}|\overline Y^\vp_r|^2\bigg|\calF_{\tau_t}\bigg]+\EX\bigg[\int^{t+\vp}_t\me^{\lambda r}|\overline Z^\vp_r|^2\dif \tau_r\bigg|\calF_{\tau_t}\bigg]
	\leq C\EX\bigg[\Big(\int^{t+\vp}_t\me^{\lambda r/2}|\overline g(r,0,0)|\dif r\Big)^2\bigg|\calF_{\tau_t}\bigg].
	\end{align*}
	Furthermore, the previous estimate and H\"older's inequality yields that 
	\begin{equation*}
	\frac{1}{\vp}\EX\bigg[\sup_{r\in[t,t+\vp]}|\overline Y^\vp_r|^2+\int^{t+\vp}_t|\overline Z^\vp_r|^2\dif \tau_r\bigg]
	\leq C\EX\bigg[\int^{t+\vp}_t|\overline g(r,0,0)|^2\dif r\bigg].
	\end{equation*}
	Thus, the absolute continuity of integrals indicates that
	\begin{equation}\label{eq:LimitVpToZeroOverlineYZTendToZero}
	\lim_{\vp\to0^+}\frac{1}{\vp}\EX\bigg[\sup_{r\in[t,t+\vp]}|\overline Y^\vp_r|^2+\int^{t+\vp}_t|\overline Z^\vp_r|^2\dif \tau_r\bigg]=0.
	\end{equation}
	
	Next taking $s=t$ and then conditional expectation with respect to $\calF_{\tau_t}$ in both sides of BSDE \eqref{eq:BSDEOverlineYZDrivenByBDr} lead to the following identity, $\prs$,
	\begin{equation*}
	\frac{1}{\vp}(\widetilde Y^\vp_t-y)=\frac{1}{\vp}\overline Y^\vp_t=\frac{1}{\vp}\EX\bigg[\int^{t+\vp}_t\overline g(r,\overline Y^\vp_r,\overline Z^\vp_r)\dif r\bigg|\calF_{\tau_t}\bigg].
	\end{equation*}
	We set
	\begin{equation*}
	M^\vp_t:=\frac{1}{\vp}\EX\bigg[\int^{t+\vp}_t\overline g(r,\overline Y^\vp_r,\overline Z^\vp_r)\dif r\bigg|\calF_{\tau_t}\bigg],\quad N^\vp_t:=\frac{1}{\vp}\EX\bigg[\int^{t+\vp}_t\overline g(r,0,0)\dif r\bigg|\calF_{\tau_t}\bigg].
	\end{equation*}
	Hence, it holds that
	\begin{equation*}
	\frac{1}{\vp}(\widetilde Y^\vp_t-y)-a_tg(t,y,z)-b_tf(t,y)=\frac{1}{\vp}\overline Y^\vp_t-\overline g(t,0,0)=\frac{1}{\vp}Y^\vp_t-\overline g(t,0,0)=M^\vp_t-N^\vp_t+N^\vp_t-\overline g(t,0,0).
	\end{equation*}
	Then it reduces to prove that $(M^\vp_t-N^\vp_t)$ and $(N^\vp_t-\overline g(t,0,0))$ tend to $0$ in $L^p$ $(1\leq p<2)$ sense for $\dte$ $t\in[0,T)$ as $\vp\to 0^+$, respectively. To this end, we should employ the following proposition, which is a corollary of Proposition 2 in \citet*{FanJiangXu2011EJP}.  
	\begin{pro}\label{pro:ApproximationForOverlineG}
		Assume that the generator $\overline g$ satisfies \ref{A:GFContinuousInY} and \ref{A:ItemGFLinearInY}, and let $y\in\rtn$. Then there exist a nonnegative $(\calF_{\tau_t})$-progressively measurable process sequence $\{(\overline g^n_t)_{t\in\tT}\}^\infty_{n=1}$ depending on $y$ such that $\lim_{n\to 0}\EX[|\overline g^n_t|^2]=0$ for $\dte$ $t\in\tT$, and $\prs$, for each $n\geq 1$ and $y'\in\rtn$, 
		\begin{align*}
		|\overline g(t,y',0)-\overline g(t,y,0)|&\leq 2n|y-y'|+\overline g^n_t.
		\end{align*}
	\end{pro}
	Continue the proof of \cref{thm:RepresentationTheoremForGenerator}. It follows from Jesen's and H\"older's inequalities, \ref{A:ItemGLipschitzInZ} for $\overline g$ and \cref{pro:ApproximationForOverlineG} that for $\dte$ $t\in[0,T)$, $n\geq 1$ and $1\leq p<2$,
	\begin{align}
	\EXlr{|M^\vp_t-N^\vp_t|^p}
	&\leq \EX\bigg[\bigg(\frac{1}{\vp}\int^{t+\vp}_t|\overline g(r,\overline Y^\vp_r,\overline Z^\vp_r)-\overline g(r,0,0)|\dif r\bigg)^p\bigg]\nonumber\\
	&\leq 2^p\EX\bigg[\bigg(\frac{1}{\vp}\int^{t+\vp}_t2n|\overline Y^\vp_r|+Ka_r|\overline Z^\vp_r|\dif r\bigg)^p\bigg]+2^p\EX\bigg[\bigg(\frac{1}{\vp}\int^{t+\vp}_t|\overline g^n_r|\dif r\bigg)^p\bigg]\nonumber\\
	&\leq 4^p(2n+K)^p\EX\bigg[\frac{1}{\vp}\int^{t+\vp}_t(|\overline Y^\vp_r|^p+a_r|\overline Z^\vp_r|^p)\dif r\bigg]+2^p\EX\bigg[\bigg(\frac{1}{\vp}\int^{t+\vp}_t|\overline g^n_r|\dif r\bigg)^p\bigg].\label{eq:MMinusNLeqYZAndG}
	\end{align}
	The first term on the right hand side of the previous inequality tends to $0$ as $\vp\to0^+$ because of \eqref{eq:LimitVpToZeroOverlineYZTendToZero}. Concerning the second term, Proposition 2.2 in \citet*{Jiang2008AAP} implies that for $\dte$ $t\in[0,T)$, each $n\geq 1$ and $1\leq p<2$,
	\begin{equation*}
	\lim_{\vp\to0^+}\EX\bigg[\bigg(\frac{1}{\vp}\int^{t+\vp}_t|\overline g^n_r|\dif r\bigg)^p\bigg]=\EX[|\overline g^n_t|^p].
	\end{equation*}
	Note that the right hand side in the previous identity tends to $0$ as $n\to\infty$. Hence, by sending $\vp\to0^+$ and then $n\to\infty$ in \eqref{eq:MMinusNLeqYZAndG}, we get that for $\dte$ $t\in[0,T)$, $\lim_{\vp\to0^+}\EX[|M^\vp-N^\vp|^p]=0$. Then we get the identity \eqref{eq:RepThmForGBSDEIntegralType}. 
	
	Now we consider the term $(N^\vp_t-\overline g(t,0,0))$. It follows from Jensen's inequality and Proposition 2.2 in \citet*{Jiang2008AAP} that for $\dte$ $t\in[0,T)$, and each $1\leq p<2$, as $\vp\to0^+$,
	\begin{equation*}
	\EXlr{|N^\vp_t-\overline g(t,0,0)|^p}\leq \EX\bigg[\bigg(\frac{1}{\vp}\int^{t+\vp}_t|\overline g(r,0,0)-\overline g(t,0,0)|\dif r\bigg)^p\bigg]\to 0.
	\end{equation*}
	Hence, we get the identity \eqref{eq:RepThmForGBSDEIdentityType}. Then the proof of \cref{thm:RepresentationTheoremForGenerator} is completed.
\end{proof}

\section{Stochastic differential games with state constraints and dynamic programming principle}
\label{sec:SDGWithStateConstraintsAndDPP}
In this section we will show the dynamic programming principle (DPP) for the stochastic differential game with state being constrained in a connected bounded closed domain, where the state equation is induced by a controlled RSDE and the cost functional is given by a GBSDE. We clarify that the control state space $U$ (resp., $V$) is a compact metric space, and the admissible control set $\calU$ (resp., $\calV$) for the player I (resp., II) is the set of all $U$ (resp., $V$)-valued $(\calF_t)$-progressively measurable processes. 

Let $\calO$ be an open connected bounded subset of $\rtn^n$ given by $\calO=\{x\in\rtn^n:\phi(x)>0\}$ with $\phi\in C^2(\rtn^n;\rtn)$, and such that $\partial\calO=\{x\in\rtn^n:\phi(x)=0\}$, with $|\nabla\phi(x)|=1$ for all $x\in\partial\calO$. Observe that $\nabla\phi(x)$ coincides with the unit normal pointing toward the interior of $\calO$ at $x\in\partial\calO$. Another observation is that $\phi$, $\nabla\phi$ and $D^2\phi$ are bounded in $\overline{\calO}$. Then there exists a constant $C_0>0$ such that
\begin{equation}\label{eq:InequalityOfBoundedSetO1}
2\langle x'-x,\nabla\phi(x)\rangle+C_0|x-x'|^2\geq 0, \quad \forall\; x\in\partial\calO,\; x'\in\overline\calO.
\end{equation}
We also postulate that $0\in\calO$. For given admissible controls $u(\cdot)\in\calU$ and $v(\cdot)\in\calV$, the corresponding state processes starting from $\zeta\in L^2(\Omega,\calF_t,\PR;\overline \calO)$ at the initial time $t\in\tT$ is governed by the following RSDE,
\begin{equation}\label{eq:RSDEControled}
  \begin{cases}
    \displaystyle X^{t,\zeta;u,v}_s=\zeta\!+\!\!\int^s_t\!\!b(r,X^{t,\zeta;u,v}_r,u_r,v_r)\dif s+\!\int^s_t\!\!\sigma(r,X^{t,\zeta;u,v}_r,u_r,v_r)\dif B_r+\!\!\int^s_t\!\nabla\phi(X^{t,\zeta;u,v}_r)\dif\eta^{t,\zeta;u,v}_r,\\[7pt]
    \displaystyle  \eta^{t,\zeta;u,v}_s=\int^s_t\one{\partial\calO}(X^{t,\zeta;u,v}_r)\dif\eta^{t,\zeta;u,v}_r,\quad \eta^{t,\zeta;u,v}_\cdot\text{ is increasing},\quad s\in[t,T].
  \end{cases}
\end{equation}
Here, the deterministic functions $b:\tT\times\overline \calO\times U\times V\mapsto\rtn^n$ and $\sigma:\tT\times\overline \calO\times U\times V\mapsto\rtn^{n\times d}$  satisfy the following assumptions:
\begin{enumerate}
	\renewcommand{\theenumi}{(H\arabic{enumi})}
	\renewcommand{\labelenumi}{\theenumi}
	\item\label{H:BSigmaBoundedWithControls} $b$ and $\sigma$ are uniformly bounded, and for each $x\in\rtn^n$, $b(\cdot,x,\cdot,\cdot)$, $\sigma(\cdot,x,\cdot,\cdot)$ are continuous;
	%\item\label{H:BContinuousInX} For each $t\in\tT$, $\nu\in U\times V$, $x\mapsto b(t,x,\nu)$ is continuous in $\overline \calO$;
	\item\label{H:BSigmaLipschitzInXWithControls} There exists a constant $K\geq 0$ such that for all $t\in\tT$, each $x_1$, $x_2\in\overline \calO$ and $(u,v)\in U\times V$, 
	\[|b(t,x_1,u,v)-b(t,x_2,u,v)|+|\sigma(t,x_1,u,v)-\sigma(t,x_2,u,v)|\leq K|x_1-x_2|.\]
\end{enumerate}
Then by Theorem 1 in \citet*{Marin-RubioReal2004JTP} we know that RSDE \eqref{eq:RSDEControled} admits a unique solution $(X^{t,\zeta;u,v}_s,\eta^{t,\zeta;u,v}_s)_{s\in[t,T]}$, which is $(\calF_s)$-progressively measurable and values in $\overline \calO\times\rtn^+$. \cref{pro:EstimateForRSDEHatXEta} in \cref{sec:AppendixComplementaryResults} indicates the following estimates. For each $t\in\tT$, $0\leq \delta\leq T-t$, $\zeta$, $\zeta'\in L^4(\Omega,\calF_t,\PR;\overline \calO)$, $u(\cdot)\in\calU$ and $v(\cdot)\in\calV$, there exists a constant $C\geq 0$ depending on $K$, $T$, $\phi$, $b$ and $\sigma$ such that
\begin{equation*}
 	\EXlr{\left.\sup_{s\in[t,T]}\big|X^{t,\zeta;u,v}_s-X^{t,\zeta';u,v}_s\big|^4+\sup_{s\in[t,T]}\big|\eta^{t,\zeta;u,v}_s-\eta^{t,\zeta';u,v}_s\big|^4\right|\calF_t}\leq C|\zeta-\zeta'|^4;
\end{equation*}
\begin{equation}\label{eq:EstimateForXEtaDominatedByZetaWithControls}
	\EXlr{\left.\sup_{s\in[t,T]}\big|X^{t,\zeta;u,v}_s\big|^4+\sup_{s\in[t,T]}\big|\eta^{t,\zeta;u,v}_s\big|^4\right|\calF_t}\leq C(1+|\zeta|^4);
\end{equation}
\begin{equation}\label{eq:EstimateForXInitialEtaWithTimeDelayWithControls}
  \EX\left[\left.\sup_{s\in[t,t+\delta]}|X^{t,\zeta;u,v}_s-\zeta|^4\right|\calF_t\right]\leq C\delta^2;\quad \EX\left[\left.|\eta^{t,\zeta;u,v}_{t+\delta}|^4\right|\calF_t\right]\leq C\delta^2.
\end{equation}

For simplicity of notations, we will denote $M:=\overline \calO\times\rtn\times\rtn^{d}$; an element in $M$ is denoted by $\Theta:=(X,Y,Z)$ with $X\in\overline \calO$, $Y\in\rtn$ and $Z\in\rtn^{d}$. Similarly, we use $\theta:=(x,y,z)$, and so on. Next we introduce the following controlled GBSDE, for given admissible controls $u(\cdot)\in\calU$ and $v(\cdot)\in\calV$, 
\begin{equation}\label{eq:GBSDEControlled}
\begin{cases}
  -\dif Y^{t,\zeta;u,v}_s=g(s,\Theta^{t,\zeta;u,v}_s,u_s,v_s)\dif s+f(s,X^{t,\zeta;u,v}_s,Y^{t,\zeta;u,v}_s,u_s,v_s)\dif \eta^{t,\zeta;u,v}_s-\langle Z^{t,\zeta;u,v}_s,\dif B_s\rangle,\\
  Y^{t,\zeta;u,v}_T=\Phi(X^{t,\zeta;u,v}_T),\quad s\in[t,T],
\end{cases}
\end{equation}
where $(X^{t,\zeta;u,v}_s,\eta^{t,\zeta;u,v}_s)_{s\in[t,T]}$ is the unique solution of RSDE \eqref{eq:RSDEControled}, the mappings $\Phi:\rtn^n\mapsto\rtn$, $g:\tT\times M\times U\times V$, $f\mapsto\tT\times\overline \calO\times\rtn\times U\times V$ satisfy the following assumptions:
\begin{enumerate}
	\renewcommand{\theenumi}{(H\arabic{enumi})}
	\renewcommand{\labelenumi}{\theenumi}
	\setcounter{enumi}{2}
	\item\label{H:GFContinuousWithControls} For each $x\in\rtn^n$ and $z\in\rtn^d$, $g(\cdot,x,\cdot,z,\cdot,\cdot)$ and $f(t,x,y,\cdot,\cdot)$ are continuous, and  $f(\cdot,\cdot,\cdot,u,v)\in C^{1,2,2}(\tT\times\rtn^n\times\rtn;\rtn)$; %where $C^{1,2,2}_b$ denotes the set of functions of class $C^{1,2,2}$ whose partial derivatives of every order are bounded;
	\item\label{H:GFMonotonicInYLipschitzInZWithControls} There exist some constants $\lambda_1$, $\lambda_2\in\rtn$ and $K\geq 0$ such that for each $t\in\tT$, $\theta$, $\theta_1$, $\theta_2\in M$, and $(u,v)\in U\times V$,
	\begin{enumerate}
		\renewcommand{\theenumii}{(\roman{enumii})}
		\renewcommand{\labelenumii}{\theenumii}
		\item\label{H:ItemGFMonotonicInYWithControls} $(y_1-y_2)\big(g(t,x,y_1,z,u,v)-g(t,x,y_2,z,u,v)\big)\leq \lambda_1|y_1-y_2|^2$, 
		
		$(y_1-y_2)\big( f(t,x,y_1,u,v)-f(t,x,y_2,u,v)\big)\leq\lambda_2|y_1-y_2|^2$;
		\item\label{H:ItemPhiGFLipschitzInZWithControls}$|\Phi(x_1)-\Phi(x_2)|+|g(t,x_1,y,z_1,u,v)-g(t,x_2,y,z_2,u,v)|+|f(t,x_1,y,u,v)-f(t,x_2,y,u,v)|\leq K(|x_1-x_2|+|z_1-z_2|)$,
		\item\label{H:ItemGFLinearGrowthInYWithControls} $|g(t,0,y,0,u,v)|+|f(t,0,y,u,v)|\leq K(1+|y|)$.
	\end{enumerate}
\end{enumerate}
It is evident that the coefficient $\Phi$ satisfies the global linear growth condition in $x$, i.e., $|\Phi(x)|\leq C(1+|x|)$. \cref{pro:EstimateForRSDEHatXEta}, \ref{H:ItemPhiGFLipschitzInZWithControls} and \ref{H:ItemGFLinearGrowthInYWithControls} yield that \ref{A:ItemGFLinearInY} and \ref{A:ItemGFSquareIntegrability} hold, then  GBSDE \eqref{eq:GBSDEControlled} admits a unique solution $(Y^{t,\zeta;u,v}_s,Z^{t,\zeta;u,v}_s)_{s\in[t,T]}\in\calS^2(t,T;\rtn)\times\calH^2(t,T;\rtn^d)$. \cref{pro:EstimatesForSolutionsOfGBSDEsAppendix} in \cref{sec:AppendixComplementaryResults} implies that there exists some constant $C\geq 0$ such that for all $t\in\tT$, $\zeta$, $\zeta'\in L^2(\Omega,\calF_t,\PR;\overline \calO)$, $u(\cdot)\in\calU$ and $v(\cdot)\in\calV$, $\prs$,
\begin{equation}\label{eq:ValueFunctionalJHolderContinuousInX}
  |Y^{t,\zeta;u,v}_t-Y^{t,\zeta';u,v}_t|\leq C(|\zeta-\zeta'|+|\zeta-\zeta'|^{1/2}),\quad |Y^{t,\zeta;u,v}_t|\leq C(1+|\zeta|).
\end{equation}

Next we define some subspaces of admissible controls and the admissible strategies for the game, which are borrowed from \citet*{BuckdahnLi2008SICON}. 
\begin{dfn}
	An admissible control process $(u_r)_{r\in[t,s]}$ (resp., $(v_r)_{r\in[t,s]}$) for player I (resp., II) on $[t,s]$ $(t<s\leq T)$ is a $(\calF_r)$-progressively measurable process taking values in $U$ (resp., $V$). The set of all admissible controls for player I (resp.,  II) on $[t,s]$ is denoted by $\calU_{t,s}$ (resp., $\calV_{t,s}$). We identify two processes $u$ and $\overline u$ in $\calU_{t,s}$ and write $u\equiv \overline u$ on $[t,s]$, if $\PR\{u=\overline u,\; a.e. \text{ in } [t,s]\}=1$. Similarly, we interpret $v\equiv\overline v$ on $[t,s]$ in $\calV_{t,s}$.
\end{dfn}
\begin{dfn}
	A non-anticipative strategy for player I on $[t,s]$ $(t<s\leq T)$ is a mapping $\alpha:\calV_{t,s}\mapsto\calU_{t,s}$ such that, for any stopping time $S\in\calT_{t,s}$ and any $v_1$, $v_2\in\calV_{t,s}$, with $v_1\equiv v_2$ on $[t,S]$, it holds that $\alpha[v_1]\equiv\alpha[v_2]$ on $[t,S]$. Non-anticipative strategies for player II on $[t,s]$, $\beta:\calU_{t,s}\mapsto\calV_{t,s}$, are define similarly. The set of all non-anticipative strategies $\alpha:\calV_{t,s}\mapsto\calU_{t,s}$ for player I on $[t,s]$ is denoted by $\calA_{t,s}$. The set of all non-anticipative strategies $\beta:\calU_{t,s}\mapsto\calV_{t,s}$ for player II on $[t,s]$ is denoted by $\calB_{t,s}$. 
\end{dfn}

Given the admissible control processes $u(\cdot)\in\calU_{t,T}$ and $v(\cdot)\in\calV_{t,T}$, we define the  associated cost functional as follows:
\begin{equation*}
  J(t,x;u,v):=Y^{t,x;u,v}_s|_{s=t},\quad (t,x)\in\tT\times\overline\calO,
\end{equation*}
where the process $Y^{t,x;u,v}_\cdot$ is the uniqueness solution of controlled GBSDE \eqref{eq:GBSDEControlled}. Since $J(t,x;u,v)$ is continuous in $x$, by some approximation arguments we can get that $\prs$,  $J(t,\zeta;u,v)=Y^{t,\zeta;u,v}_t$ holds for each $t\in\tT$ and $\zeta\in L^4(\Omega,\calF_t,\PR;\overline \calO)$.

Let us now define the lower and upper value functions of our stochastic differential game with state constraints as follows, respectively,
\begin{align}\label{eq:DefOfValueFunctionWAndU}
  W(t,x):=\essinf_{\beta\in\calB_{t,T}}\esssup_{u\in\calU_{t,T}}J(t,x;u,\beta[u]);\ 
  U(t,x):=\esssup_{\alpha\in\calA_{t,T}}\essinf_{v\in\calV_{t,T}}J(t,x;\alpha[v],v),\  (t,x)\in\tT\times\overline \calO.
\end{align}
We can prove that the previous lower and upper value functions are deterministic, see \cref{pro:LowerUpperValueFunctionDeterministic}. The Girsanov transformation method and the uniqueness for solutions of RSDE \eqref{eq:RSDEControled} and GBSDE \eqref{eq:GBSDEControlled} play a key role in the proof. The arguments are very analogous to Proposition 3.3 in \citet*{BuckdahnLi2008SICON} so we omit them.   
\begin{pro}\label{pro:LowerUpperValueFunctionDeterministic}
	For each $(t,x)\in\tT\times\overline \calO$, we have $W(t,x)=\EX[W(t,x)]$ and $U(t,x)=\EX[U(t,x)]$, $\prs$. By identifying $W(t,x)$ and $U(t,x)$ with their deterministic versions, we can consider $W$, $U:\tT\times\overline \calO\mapsto\rtn$ as  deterministic functions.
\end{pro}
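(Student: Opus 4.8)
The plan is to follow the Girsanov-transformation method, showing that $W(t,x)$ is an $\calF_t$-measurable random variable that is invariant under all Cameron--Martin shifts of the paths on $[0,t]$, and then to invoke the standard fact that such invariance forces the variable to be $\PR$-a.s.\ constant. First I would record that $W(t,x)$ and $U(t,x)$ are indeed $\calF_t$-measurable: the cost functional $J(t,x;u,v)=Y^{t,x;u,v}_t$ is $\calF_t$-measurable since $Y^{t,x;u,v}_\cdot$ is $(\calF_s)$-adapted, and taking the iterated essential extrema ($\essinf_\beta\esssup_u$ for $W$, and $\esssup_\alpha\essinf_v$ for $U$) over families of $\calF_t$-measurable random variables again produces $\calF_t$-measurable random variables.

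Next I would introduce, for each deterministic $h\in L^2([0,t];\rtn^d)$, the shift $\tau_h\colon\Omega\to\Omega$ defined by $(\tau_h\omega)_s=\omega_s+\int^{s\wedge t}_0 h_r\dif r$. Each $\tau_h$ is a bijection with inverse $\tau_{-h}$, and by the Cameron--Martin/Girsanov theorem the law $\PR\circ\tau_h^{-1}$ is equivalent to $\PR$, so that $\PR$-a.s.\ statements and essential extrema are preserved under $\tau_h$. The key structural observation is that $h$ is supported on $[0,t]$, whence the future increments satisfy $B_s(\tau_h\omega)-B_t(\tau_h\omega)=B_s(\omega)-B_t(\omega)$ for all $s\in[t,T]$; the noise that drives the dynamics after time $t$ is therefore left unchanged. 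The maps $u\mapsto u\circ\tau_h$ and $v\mapsto v\circ\tau_h$ are bijections of $\calU_{t,T}$ and $\calV_{t,T}$ onto themselves, and each $\beta\in\calB_{t,T}$ induces a bijection $\beta^h$ of $\calB_{t,T}$ via $\beta^h[w]:=\beta[w\circ\tau_{-h}]\circ\tau_h$, which is again non-anticipative and satisfies $(\beta[u])\circ\tau_h=\beta^h[u\circ\tau_h]$; symmetrically for the strategies $\alpha\in\calA_{t,T}$.

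With these ingredients, the central step is to transfer the invariance through the RSDE \eqref{eq:RSDEControled} and GBSDE \eqref{eq:GBSDEControlled}. Applying $\tau_h$ and using that the after-$t$ increments of $B$ are unchanged, the transformed processes $\big(X^{t,x;u,v}(\tau_h\cdot),\eta^{t,x;u,v}(\tau_h\cdot)\big)$ solve the same RSDE driven by the controls $(u\circ\tau_h,v\circ\tau_h)$; by the \emph{uniqueness} of its solution they coincide with $\big(X^{t,x;u\circ\tau_h,v\circ\tau_h},\eta^{t,x;u\circ\tau_h,v\circ\tau_h}\big)$, and the analogous identity for the GBSDE then gives $Y^{t,x;u,v}_t(\tau_h\omega)=Y^{t,x;u\circ\tau_h,v\circ\tau_h}_t(\omega)$, that is, $J(t,x;u,v)(\tau_h\cdot)=J(t,x;u\circ\tau_h,v\circ\tau_h)(\cdot)$. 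Since the shifts permute the admissible controls and strategies bijectively and preserve $\PR$-null sets, passing to the essential extrema yields $W(t,x)(\tau_h\cdot)=W(t,x)(\cdot)$ $\PR$-a.s.\ for every $h$, and likewise for $U(t,x)$. Finally, an $\calF_t$-measurable random variable invariant under every Cameron--Martin shift $\tau_h$ with $h\in L^2([0,t];\rtn^d)$ is $\PR$-a.s.\ equal to a constant (a standard consequence of the quasi-invariance of the Wiener measure), which gives $W(t,x)=\EX[W(t,x)]$ and $U(t,x)=\EX[U(t,x)]$, $\prs$. I expect the main obstacle to be the rigorous verification of these solution-transformation identities --- in particular, checking that the reflection process $\eta$ and the full coupled RSDE--GBSDE system transform correctly under $\tau_h$, and that the induced maps on controls and strategies are genuine measure-class-preserving bijections --- because the entire argument downstream rests on invoking uniqueness for precisely the transformed equations.
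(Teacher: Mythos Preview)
Your proposal is correct and follows essentially the same route the paper indicates: the paper omits the proof but points explicitly to the Girsanov transformation method together with the uniqueness of solutions of the RSDE and GBSDE, referring to Proposition~3.3 in \citet*{BuckdahnLi2008SICON}, which is precisely the Cameron--Martin shift argument you outline. The only additional care needed here compared with the unreflected case---and you already flag it---is to verify that the pair $(X^{t,x;u,v},\eta^{t,x;u,v})$ transforms correctly under $\tau_h$, which follows because the reflecting term depends on $\omega$ only through $X^{t,x;u,v}$ and the uniqueness result for RSDE~\eqref{eq:RSDEControled} is pathwise in the driving increments after time $t$.
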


Next we focus on the study of the properties of $W(t,x)$ because the counterparts for $U(t,x)$ can be obtained similarly. As a consequence of \eqref{eq:ValueFunctionalJHolderContinuousInX} and \eqref{eq:DefOfValueFunctionWAndU}, $W(t,x)$ is continuous in $x$.
\begin{thm}\label{thm:LowerValueFunctionWHolderContinuousInX}
	There exists a constant $C>0$ such that for all $t\in\tT$, $x$ and $x'\in\overline \calO$, 
	\begin{gather*}
	  |W(t,x)-W(t,x')|\leq C(|x-x'|+|x-x'|^{1/2}),\\
	  |W(t,x)|\leq C(1+|x|).
	\end{gather*}
\end{thm}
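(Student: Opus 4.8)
The plan is to transfer the two uniform estimates on the cost functional $J(t,x;u,v)=Y^{t,x;u,v}_t$ recorded in \eqref{eq:ValueFunctionalJHolderContinuousInX} through the $\essinf$ and $\esssup$ operations appearing in the definition \eqref{eq:DefOfValueFunctionWAndU} of $W$. The crucial observation is that the constant $C$ in \eqref{eq:ValueFunctionalJHolderContinuousInX} does not depend on the controls $u(\cdot)\in\calU$ and $v(\cdot)\in\calV$. Hence, taking $\zeta=x$ and $\zeta'=x'$ to be deterministic constants in $\overline\calO$, we obtain, uniformly in $(u,v)$,
\begin{equation*}
|J(t,x;u,v)-J(t,x';u,v)|\leq C(|x-x'|+|x-x'|^{1/2}),\qquad |J(t,x;u,v)|\leq C(1+|x|),\qquad\prs
\end{equation*}

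First I would isolate an elementary stability lemma for essential extrema: if two families of random variables $\{F_a\}_a$ and $\{G_a\}_a$ satisfy $|F_a-G_a|\leq\delta$ $\prs$ for a deterministic constant $\delta$, then $|\esssup_a F_a-\esssup_a G_a|\leq\delta$ and $|\essinf_a F_a-\essinf_a G_a|\leq\delta$ $\prs$. This follows from the classical fact that the essential supremum of any family is realized as a countable pointwise supremum $\esssup_a F_a=\sup_n F_{a_n}$ $\prs$: from $F_{a_n}\leq G_{a_n}+\delta\leq\esssup_a G_a+\delta$ one gets $\esssup_a F_a\leq\esssup_a G_a+\delta$, and the reverse estimate follows by symmetry; the $\essinf$ case is the same after replacing each variable by its negative.

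Then I would apply this lemma twice. Fix $t\in\tT$, $x,x'\in\overline\calO$ and a strategy $\beta\in\calB_{t,T}$. Since $\beta[u]\in\calV_{t,T}$ for every $u\in\calU_{t,T}$, the displayed uniform estimate gives $|J(t,x;u,\beta[u])-J(t,x';u,\beta[u])|\leq C(|x-x'|+|x-x'|^{1/2})$ for all $u$, whence by the lemma, applied to the families indexed by $u$,
\begin{equation*}
\Big|\esssup_{u\in\calU_{t,T}}J(t,x;u,\beta[u])-\esssup_{u\in\calU_{t,T}}J(t,x';u,\beta[u])\Big|\leq C(|x-x'|+|x-x'|^{1/2}),\qquad\prs
\end{equation*}
Applying the lemma once more to the families indexed by $\beta\in\calB_{t,T}$ and recalling \eqref{eq:DefOfValueFunctionWAndU} yields $|W(t,x)-W(t,x')|\leq C(|x-x'|+|x-x'|^{1/2})$ $\prs$; since $W$ is deterministic by \cref{pro:LowerUpperValueFunctionDeterministic}, this is the first claimed bound. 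The linear growth $|W(t,x)|\leq C(1+|x|)$ is obtained identically, starting from the second uniform estimate on $J$ and using that both $\essinf$ and $\esssup$ preserve the pointwise bound $|J(t,x;u,v)|\leq C(1+|x|)$.

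I do not expect a serious obstacle here; the only point requiring care is the passage of the pointwise-in-controls estimates through the nested $\essinf$/$\esssup$, which is exactly what the countable-representation stability lemma handles. The uniformity in $(u,v)$ of the constant $C$ in \eqref{eq:ValueFunctionalJHolderContinuousInX}, already built into the cited estimate, is what makes this transfer legitimate.
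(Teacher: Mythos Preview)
Your proposal is correct and follows exactly the approach indicated in the paper, which states the theorem as a direct consequence of \eqref{eq:ValueFunctionalJHolderContinuousInX} and \eqref{eq:DefOfValueFunctionWAndU} without giving further details. Your explicit stability lemma for $\essinf$/$\esssup$ under uniform perturbations simply spells out what the paper leaves implicit.
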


Before illustrating the (weak) DPP for the lower value function $W(t,x)$, we should adapt the notion of backward semigroups, initiated by \citet*{Peng1997SuiJiFenXiXuanJiang}, from the BSDE case to the GBSDE case. For each given initial data $(t,x)\in\tT\times\overline \calO$, any two stopping times $\tau\in\calT_{t,T}$ and $\sigma\in\calT_{t,\tau}$, two admissible control processes $u(\cdot)\in\calU_{t,\tau}$ and $v(\cdot)\in\calV_{t,\tau}$ and a random variable $\xi\in L^2(\Omega,\calF_{\tau},\PR;\rtn)$, we put 
\begin{equation*}%\label{eq:DfnOfBackwardSemiGroup}
  G^{t,x;u,v}_{\sigma,\tau}[\xi]:=\bar Y^{t,x;u,v}_\sigma,
\end{equation*}
where the pair $(\bar Y^{t,x;u,v}_s,\bar Z^{t,x;u,v}_s)_{s\in[t,\tau]}$ is the solution of the following GBSDE, 
\begin{equation*}
  \begin{cases}
  -\dif\bar Y^{t,x;u,v}_s=g(s,X^{t,x;u,v}_s,\bar Y^{t,x;u,v}_s,\bar Z^{t,x;u,v}_s,u_s,v_s)\dif s -\langle \bar Z^{t,x;u,v}_s,\dif B_s\rangle\\
  \hspace{2.2cm}+f(s,X^{t,x;u,v}_s,\bar Y^{t,x;u,v}_s,u_s,v_s)\dif \eta^{t,x;u,v}_s,\\
  \bar Y^{t,x;u,v}_\tau=\xi,\quad s\in[t,\tau],
  \end{cases}
\end{equation*}
and $(X^{t,x;u,v}_\cdot,\eta^{t,x;u,v}_\cdot)$ is the solution of RSDE \eqref{eq:RSDEControled}. Then, concerning the solution $Y^{t,x;u,v}_\cdot$ of GBSDE \eqref{eq:GBSDEControlled} we have the flow property for the backward semigroup $G$, i.e., for each $\tau\in\calT_{t,T}$,
\begin{equation*}%\label{eq:FlowPropertyOfBackwardSemigropG}
  J(t,x;u,v)=Y^{t,x;u,v}_t=G^{t,x;u,v}_{t,T}[\Phi(X^{t,x;u,v}_T)]=G^{t,x;u,v}_{t,\tau}[Y^{t,x;u,v}_\tau].
  %=G^{t,x;u,v}_{t,\tau}[J(\tau,X^{t,x;u,v}_\tau;u,v)].
\end{equation*}

Now we present the corresponding (weak) DPP. Its proof is quite analogous to that of Theorem 3.6 in \citet*{BuckdahnLi2008SICON} because all the major tools employed by them hold true in our framework, such as the non-anticipativity property of $\beta$, the uniqueness for solutions of RSDE \eqref{eq:RSDEControled} and GBSDE \eqref{eq:GBSDEControlled} and \cref{thm:LowerValueFunctionWHolderContinuousInX}. Thus, we omit its proof.
\begin{thm}[Weak DPP]\label{thm:DPPWeakVersion}
	Assume that \ref{H:BSigmaBoundedWithControls} -- \ref{H:GFMonotonicInYLipschitzInZWithControls} hold. For each $(t,x)\in\tT\times\overline \calO$ and $0<\delta\leq T-t$, the lower value function $W(t,x)$ enjoys the following DPP:
	\begin{equation*}
	  W(t,x)=\essinf_{\beta\in\calB_{t,t+\delta}}\esssup_{u\in\calU_{t,t+\delta}}G^{t,x;u,\beta[u]}_{t,t+\delta}[W(t+\delta,X^{t,x;u,\beta[u]}_{t+\delta})].
	\end{equation*}
\end{thm}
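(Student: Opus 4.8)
The plan is to prove the two inequalities $W(t,x)\le W_\delta(t,x)$ and $W(t,x)\ge W_\delta(t,x)$ separately, writing $W_\delta(t,x)$ for the right-hand side of the asserted identity. Throughout I would lean on exactly the ingredients flagged before the statement: the flow (semigroup) property of the backward semigroup $G$, the monotonicity of $G$ in its terminal datum supplied by the comparison theorem \cref{lem:ComparisonTheoremForSolutionsOfGBSDEs} together with the a priori estimate \cref{lem:AprioriEstimateForDifferenceOfYs}, and the uniform spatial continuity of $W$ from \cref{thm:LowerValueFunctionWHolderContinuousInX}. Since $W(t+\delta,\cdot)$ is deterministic by \cref{pro:LowerUpperValueFunctionDeterministic}, $\vp$-optimal objects are available at each fixed point of $\overline\calO$, and the only genuine work is to splice them along the random intermediate state $X^{t,x;u,\beta[u]}_{t+\delta}$ by a measurable partition argument.

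For $W\le W_\delta$, I would fix an arbitrary $\beta_1\in\calB_{t,t+\delta}$ and aim to show $W(t,x)\le\esssup_{u\in\calU_{t,t+\delta}}G^{t,x;u,\beta_1[u]}_{t,t+\delta}[W(t+\delta,X^{t,x;u,\beta_1[u]}_{t+\delta})]$; taking the essential infimum over $\beta_1$ then yields the claim. Fixing $\vp>0$, I would cover $\overline\calO$ by finitely many disjoint Borel sets $\{O_i\}$ of diameter less than $\vp$ with marked points $x_i\in O_i$, choose for each $i$ a strategy $\beta^i\in\calB_{t+\delta,T}$ that is $\vp$-optimal for $W(t+\delta,x_i)$, and glue these into a single $\beta\in\calB_{t,T}$ that coincides with $\beta_1$ on $[t,t+\delta]$ and, on $\{X^{t,x;u,\beta_1[u]}_{t+\delta}\in O_i\}$, continues as $\beta^i$. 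Non-anticipativity of $\beta_1$ and of the $\beta^i$, with the uniqueness for RSDE \eqref{eq:RSDEControled} and GBSDE \eqref{eq:GBSDEControlled}, make $\beta$ a legitimate non-anticipative strategy. The flow property gives $J(t,x;u,\beta[u])=G^{t,x;u,\beta_1[u]}_{t,t+\delta}[J(t+\delta,X_{t+\delta};u,\beta[u])]$; $\vp$-optimality of the $\beta^i$ together with \cref{thm:LowerValueFunctionWHolderContinuousInX} bounds the inner datum by $W(t+\delta,X_{t+\delta})+C(\vp+\vp^{1/2})$, and monotonicity of $G$ together with $W(t,x)\le\esssup_u J(t,x;u,\beta[u])$ closes the inequality after $\vp\to0^+$.

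For $W\ge W_\delta$, given $\vp>0$ I would take $\beta\in\calB_{t,T}$ with $\esssup_u J(t,x;u,\beta[u])\le W(t,x)+\vp$ and set $\beta_1:=\beta|_{[t,t+\delta]}\in\calB_{t,t+\delta}$. For each $u_1\in\calU_{t,t+\delta}$, concatenating $u_1$ with a control $u_2$ on $[t+\delta,T]$ turns the restriction of $\beta$ to $[t+\delta,T]$ into a subgame strategy $\beta_2$, and choosing $u_2$ nearly optimal for the inner supremum one arranges $J(t+\delta,X_{t+\delta};u_2,\beta_2[u_2])\ge W(t+\delta,X_{t+\delta})-\vp$. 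The flow property and monotonicity of $G$ then give $G^{t,x;u_1,\beta_1[u_1]}_{t,t+\delta}[W(t+\delta,X_{t+\delta})]\le J(t,x;u_1\oplus u_2,\beta[u_1\oplus u_2])+C\vp\le W(t,x)+(1+C)\vp$. As this bound is deterministic it survives the essential supremum over $u_1$, and exhibiting this one $\beta_1$ yields $W_\delta(t,x)\le W(t,x)+(1+C)\vp$; letting $\vp\to0^+$ finishes.

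The main obstacle is the passage between the deterministic $\vp$-optimal strategies and controls---available pointwise because of \cref{pro:LowerUpperValueFunctionDeterministic}---and the random terminal state $X_{t+\delta}$: one must verify that the spliced strategies and concatenated controls are genuinely progressively measurable and non-anticipative, and that the error incurred by replacing $X_{t+\delta}$ by the finitely many marked points $x_i$ is uniformly controlled. This is precisely where the uniform modulus from \cref{thm:LowerValueFunctionWHolderContinuousInX} is indispensable. A secondary, setting-specific point is that every monotonicity and stability step must be routed through the comparison theorem and estimates for the \emph{generalized} equation, so that the extra driver $f\,\dif\eta^{t,x;u,v}$ is accounted for; because \cref{lem:ComparisonTheoremForSolutionsOfGBSDEs,lem:AprioriEstimateForDifferenceOfYs} hold in the same form as for ordinary BSDEs, the Buckdahn--Li scheme transfers with no structural change.
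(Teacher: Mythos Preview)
Your proposal is correct and follows essentially the same approach as the paper, which omits the detailed argument and refers to Theorem 3.6 of \citet{BuckdahnLi2008SICON}. Your spatial-partition-and-gluing scheme is precisely the Buckdahn--Li route, with \cref{lem:ComparisonTheoremForSolutionsOfGBSDEs,lem:AprioriEstimateForDifferenceOfYs} and \cref{thm:LowerValueFunctionWHolderContinuousInX} standing in for their BSDE analogues; the only cosmetic difference from the paper's own detailed computation (for the \emph{strong} DPP, \cref{thm:DPPStrongVersion}) is that you partition $\overline\calO$ spatially whereas the paper partitions $\Omega$ via $\vp$-optimal sequences, but both methods are standard and equivalent here.
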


\begin{rmk}\label{rmk:InequalitiesFromWeakDPP}
	It is easily followed from the previous DPP (or Remark 3.4 in \citet*{BuckdahnLi2008SICON}) that for each $(t,x)\in\tT\times\overline \calO$, $0<\delta\leq T-t$ and $\vp>0$, 
	\begin{enumerate}
		\item[(i)] for each $\beta[\cdot]\in\calB_{t,t+\delta}$, there exists some $u^\vp(\cdot)\in\calU_{t,t+\delta}$ such that 
		\begin{equation*}
		  W(t,x)\leq G^{t,x;u^\vp,\beta[u^\vp]}_{t,t+\delta}[W(t+\delta,X^{t,x;u^\vp,\beta[u^\vp]}_{t+\delta})]+\vp,\quad \prs;
		\end{equation*}
		\item[(ii)] there exists some $\beta^\vp[\cdot]\in\calB_{t,t+\delta}$ such that for all $u(\cdot)\in\calU_{t,t+\delta}$,
		\begin{equation*}
		  W(t,x)\geq G^{t,x;u,\beta^\vp[u]}_{t,t+\delta}[W(t+\delta,X^{t,x;u,\beta^\vp[u]}_{t+\delta})]-\vp,\quad \prs.
		\end{equation*}
	\end{enumerate}
\end{rmk}

With the help of \cref{rmk:InequalitiesFromWeakDPP}, we can prove the continuity of the lower value function $W(t,x)$ with respect to $t$.
\begin{thm}\label{thm:LowerValueFunctionWContinuousInT}
	Assume that \ref{H:BSigmaBoundedWithControls} -- \ref{H:GFMonotonicInYLipschitzInZWithControls} hold. Then for each $x\in\overline \calO$ and $t$, $t'\in\tT$, there exists a constant $C\geq 0$ such that
	\begin{equation*}
	  |W(t,x)-W(t',x)|\leq C(|t-t'|^{1/2}+|t-t'|^{1/4}).
	\end{equation*}
\end{thm}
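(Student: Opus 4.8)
The plan is to combine the DPP equality of \cref{thm:DPPWeakVersion} with a single almost-sure estimate, uniform in the controls, that compares the backward semigroup over a short interval with the value function at the later time. Without loss of generality assume $t<t'$ and set $\delta:=t'-t$; since $t'\le T$ we have $0<\delta\le T-t$, so \cref{thm:DPPWeakVersion} applies, and since $W$ is bounded on $\tT\times\overline\calO$ (linear growth in $x$ by \cref{thm:LowerValueFunctionWHolderContinuousInX} and boundedness of $\overline\calO$) it suffices to treat small $\delta$, the range of $\delta$ bounded away from $0$ being absorbed into the constant. The DPP reads $W(t,x)=\essinf_{\beta}\esssup_{u}G^{t,x;u,\beta[u]}_{t,t+\delta}[W(t+\delta,X^{t,x;u,\beta[u]}_{t+\delta})]$, so the whole statement follows once I show that
\[
\big|G^{t,x;u,v}_{t,t+\delta}[W(t+\delta,X^{t,x;u,v}_{t+\delta})]-W(t+\delta,x)\big|\le C(\delta^{1/2}+\delta^{1/4}),\quad\prs,
\]
with a constant $C$ independent of the admissible $u,v$. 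Indeed, an a.s.\ two-sided bound that is uniform in $(u,v)$ passes through both $\esssup_u$ and $\essinf_\beta$: each $G^{t,x;u,\beta[u]}_{t,t+\delta}[\cdots]$ lies in the band $W(t+\delta,x)\pm C(\delta^{1/2}+\delta^{1/4})$, hence so do the $\esssup_u$ and then the $\essinf_\beta$, yielding $|W(t,x)-W(t+\delta,x)|\le C(\delta^{1/2}+\delta^{1/4})$. (This is the quantitative content behind the near-optimal inequalities of \cref{rmk:InequalitiesFromWeakDPP}.)

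To prove the displayed estimate, write $(\bar Y_s,\bar Z_s)_{s\in[t,t+\delta]}$ for the GBSDE defining $G^{t,x;u,v}_{t,t+\delta}[\xi]$ with terminal datum $\xi:=W(t+\delta,X^{t,x;u,v}_{t+\delta})$, so that $G^{t,x;u,v}_{t,t+\delta}[\xi]=\bar Y_t$. The crucial observation is that $\bar Y_t$ is $\calF_t$-measurable, hence equals its own $\calF_t$-conditional expectation, while the stochastic integral has vanishing $\calF_t$-conditional mean; evaluating the GBSDE at $s=t$ therefore gives the representation
\[
\bar Y_t-W(t+\delta,x)=\EX\Big[W(t+\delta,X_{t+\delta})-W(t+\delta,x)+\intT[t+\delta]{t}g\,\dif s+\intT[t+\delta]{t}f\,\dif\eta_s\,\Big|\,\calF_t\Big],
\]
where $g,f$ are evaluated along $(X_s,\bar Y_s,\bar Z_s,u_s,v_s)$. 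Conditioning on $\calF_t$ is precisely what removes the martingale term and makes the resulting constant deterministic, so that the $\esssup/\essinf$ step above is legitimate. The first term is controlled by the square-root Hölder continuity of $W$ in $x$ (\cref{thm:LowerValueFunctionWHolderContinuousInX}) and the conditional moment bound \eqref{eq:EstimateForXInitialEtaWithTimeDelayWithControls}: from $\EX[|X_{t+\delta}-x|^4\,|\,\calF_t]\le C\delta^2$ one gets $\EX[|X_{t+\delta}-x|\,|\,\calF_t]\le C\delta^{1/2}$ and $\EX[|X_{t+\delta}-x|^{1/2}\,|\,\calF_t]\le C\delta^{1/4}$, so this contribution is $C(\delta^{1/2}+\delta^{1/4})$; the exponent $1/4$ is exactly where the reflection-induced, merely $1/2$-Hölder regularity of $W$ enters.

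For the driver terms I would use the Lipschitz and linear-growth conditions collected in \ref{H:GFMonotonicInYLipschitzInZWithControls} together with boundedness of $X_\cdot$ in $\overline\calO$, giving $|g|\le C(1+|\bar Y_s|+|\bar Z_s|)$ and $|f|\le C(1+|\bar Y_s|)$. Applying a conditional form of the a priori estimate \cref{lem:EstimateForGBSDEYZDrivenByA} to $(\bar Y,\bar Z)$, and using $|\xi|=|W(t+\delta,X_{t+\delta})|\le C$ (boundedness of $\overline\calO$), one bounds $\EX[\sup_s|\bar Y_s|^2\,|\,\calF_t]$ and $\EX[\intT[t+\delta]{t}|\bar Z_s|^2\dif s\,|\,\calF_t]$ by a deterministic constant. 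Then Cauchy--Schwarz gives $\EX[\intT[t+\delta]{t}|g|\dif s\,|\,\calF_t]\le C(\delta+\delta^{1/2})$, while for the reflection term the pathwise bound $\intT[t+\delta]{t}|f|\dif\eta_s\le C(1+\sup_s|\bar Y_s|)\,\eta_{t+\delta}$ combines, via Cauchy--Schwarz and the conditional bound $\EX[\eta_{t+\delta}^2\,|\,\calF_t]\le C\delta$ from \eqref{eq:EstimateForXInitialEtaWithTimeDelayWithControls}, to give $\EX[\intT[t+\delta]{t}|f|\dif\eta_s\,|\,\calF_t]\le C\delta^{1/2}$. Summing the three contributions proves the displayed estimate, and the case $t>t'$ is symmetric. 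The main obstacle, and the step needing the most care, is securing this uniform-in-$(u,v)$, almost-sure estimate: one must verify that the a priori GBSDE bounds hold in conditional form with control-independent constants, and the genuinely new feature relative to the unconstrained setting of \citet*{BuckdahnLi2008SICON} is the reflection term $\int f\,\dif\eta$, whose $\delta^{1/2}$ size (rather than $\delta$) together with the $\delta^{1/4}$ from the $1/2$-Hölder dependence of $W$ on $x$ is exactly what forces the $|t-t'|^{1/2}+|t-t'|^{1/4}$ modulus.
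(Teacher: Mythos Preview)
Your argument is correct and follows essentially the same route as the paper: invoke the weak DPP, then bound $G^{t,x;u,v}_{t,t+\delta}[W(t+\delta,X_{t+\delta})]-W(t+\delta,x)$ uniformly in the controls via the conditional-expectation representation of $\bar Y_t$, using the H\"older regularity of $W$ in $x$, the moment bounds \eqref{eq:EstimateForXInitialEtaWithTimeDelayWithControls}, and the GBSDE a priori estimates. The paper merely organizes the same computation as a two-term split $I^1_\delta+I^2_\delta$ (difference of semigroups handled by \cref{lem:AprioriEstimateForDifferenceOfYs}, then semigroup of the constant $W(t+\delta,x)$ handled exactly as you do), with identical ingredients and identical resulting exponents.
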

\begin{proof}
	Let assumptions hold, $(t,x)\in\tT\times\overline \calO$ and $0<\delta\leq T-t$. It is sufficient to prove the following inequality by \cref{rmk:InequalitiesFromWeakDPP},
	\begin{equation*}
	  -C(\delta^{1/2}+\delta^{1/4})\leq W(t,x)-W(t+\delta,x)\leq C(\delta^{1/2}+\delta^{1/4}).
	\end{equation*}
	We only prove the second inequality in the previous inequality since the other one can be shown in a similar way. It follows from (i) in \cref{rmk:InequalitiesFromWeakDPP} that for small enough $\vp>0$, arbitrarily chosen $\beta[\cdot]\in\calB_{t,t+\delta}$ and $u^\vp(\cdot)\in\calU_{t,t+\delta}$,
	\begin{equation*}
	  W(t,x)-W(t+\delta,x)\leq I^1_\delta+I^2_\delta+\vp,
	\end{equation*}
	where 
	\begin{align*}
	  I^1_\delta&:=G^{t,x;u^\vp,\beta[u^\vp]}_{t,t+\delta}[W(t+\delta,X^{t,x;u^\vp,\beta[u^\vp]}_{t+\delta})]-G^{t,x;u^\vp,\beta[u^\vp]}_{t,t+\delta}[W(t+\delta,x)],\\
	  I^2_\delta&:=G^{t,x;u^\vp,\beta[u^\vp]}_{t,t+\delta}[W(t+\delta,x)]-W(t+\delta,x).
	\end{align*}
	We now estimate $I^1_\delta$ and $I^2_\delta$ respectively. In view of the notion of backward semigroups, \cref{lem:AprioriEstimateForDifferenceOfYs}, \cref{thm:LowerValueFunctionWHolderContinuousInX} and \eqref{eq:EstimateForXInitialEtaWithTimeDelayWithControls}, we can deduce that there exists a constant $C\geq 0$, which does not depend on the controls and is allowed to vary from line to line, such that
	\begin{align*}
	  |I^1_\delta|^2
	  &\leq C\EXlr{\left.\big|W(t+\delta,X^{t,x;u^\vp,\beta[u^\vp]}_{t+\delta})-W(t+\delta,x)\big|^2\right|\calF_t}\\
	  &\leq C\EXlr{\left.\big|X^{t,x;u^\vp,\beta[u^\vp]}_{t+\delta}-x\big|^2+\big|X^{t,x;u^\vp,\beta[u^\vp]}_{t+\delta}-x\big|\right|\calF_t}
	  \leq C(\delta+\delta^{1/2}).
	\end{align*}
	From the definition of $G^{t,x;u^\vp,\beta[u^\vp]}_{t,t+\delta}[\cdot]$ we know that $I^2_\delta$ can be written as 
	\begin{align*}
	  I^2_\delta
	  ={}&\EX\bigg[W(t+\delta,x)+\int^{t+\delta}_tg(r,X^{t,x;u^\vp,\beta[u^\vp]}_r,\bar Y^{t,x;u^\vp,\beta[u^\vp]}_r,\bar Z^{t,x;u^\vp,\beta[u^\vp]}_r,u^\vp_r,\beta_r[u^\vp_\cdot])\dif r\\
	  &\quad+\int^{t+\delta}_tf(r,X^{t,x;u^\vp,\beta[u^\vp]}_r,\bar Y^{t,x;u^\vp,\beta[u^\vp]}_r,u^\vp_r,\beta_r[u^\vp_\cdot])\dif \eta^{t,x;u^\vp,\beta[u^\vp]}_r\bigg|\calF_t\bigg]-W(t+\delta,x).
	\end{align*}
	Noticing that $W(t+\delta,x)$ is deterministic, we derive by H\"older's inequality, linear growth for $g$ and $f$, \eqref{eq:EstimateForXEtaDominatedByZetaWithControls}, \eqref{eq:EstimateForXInitialEtaWithTimeDelayWithControls}, \cref{pro:EstimatesForSolutionsOfGBSDEsAppendix}  and boundedness of $\overline \calO$ that
	\begin{align*}
	  |I^2_\delta|^2
	  &\leq C\EX\bigg[\delta\int^{t+\delta}_t\big(1+|X^{t,x;u^\vp,\beta[u^\vp]}_r|^2+|\bar Y^{t,x;u^\vp,\beta[u^\vp]}_r|^2+|\bar Z^{t,x;u^\vp,\beta[u^\vp]}_r|^2\big)\bigg|\calF_t\bigg]\\
	  &\quad\;+C\EX\bigg[\eta^{t,x;u^\vp,\beta[u^\vp]}_{t+\delta}\cdot\int^{t+\delta}_t\big(1+|X^{t,x;u^\vp,\beta[u^\vp]}_r|^2+|\bar Y^{t,x;u^\vp,\beta[u^\vp]}_r|^2\big)\dif \eta^{t,x;u^\vp,\beta[u^\vp]}_r\bigg|\calF_t\bigg]\\
	  &\leq C\delta+C\EXlr{\left.\big|\eta^{t,x;u^\vp,\beta[u^\vp]}_{t+\delta}\big|^2\right|\calF_t}\leq C\delta.
	\end{align*}
	Thereby, we get that $W(t,x)-W(t+\delta,x)\leq C(\delta^{1/2}+\delta^{1/4})+\vp$. Then letting $\vp\to 0$ yields the desired results. The proof is completed.
\end{proof}
Observe that the continuity of $W(t,x)$ in $t$ implies the continuity of $J(t,x;u,v)$ in $t$. Then also by some approximation arguments we can get that $\prs$,  $J(\tau,\zeta;u,v)=Y^{\tau,\zeta;u,v}_\tau$ holds for each $\tau\in\calT_{t,T}$ and $\zeta\in L^4(\Omega,\calF_\tau,\PR;\overline \calO)$. Hence, the flow property of the backward semigroup $G$ can be written as
\begin{align}
  J(t,x;u,v)=Y^{t,x;u,v}_t&=G^{t,x;u,v}_{t,T}[\Phi(X^{t,x;u,v}_T)]=G^{t,x;u,v}_{t,\tau}[Y^{t,x;u,v}_\tau]\nonumber\\
  &=G^{t,x;u,v}_{t,\tau}[Y^{\tau,X^{t,x;u,v}_\tau;u,v}_\tau]=G^{t,x;u,v}_{t,\tau}[J(\tau,X^{t,x;u,v}_\tau;u,v)].\label{eq:FlowPropertyOfBackwardSemigropG}
\end{align} 

From \cref{thm:LowerValueFunctionWHolderContinuousInX} and \cref{thm:LowerValueFunctionWContinuousInT}, similarly to Proposition 2.6 in \citet*{WuYu2014SPA} and Theorem A.2 in \citet*{BuckdahnLi2008SICON}, we deduce the following conclusion. 
\begin{lem}\label{lem:LowerValueFunctionWithStoppingTimeAndRandomVariable}
	For each initial data $(t,x)\in\tT\times\overline \calO$, $\tau\in\calT_{t,T}$ and $\xi\in L^2(\Omega,\calF_\tau,\PR;\overline \calO)$, 
	\begin{equation*}
	  W(\tau,\xi)=\essinf_{\beta\in\calB_{\tau,T}}\esssup_{u\in\calU_{\tau,T}}J(\tau,\xi;u,\beta[u]),\quad \prs.
	\end{equation*}
\end{lem}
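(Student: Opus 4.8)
The plan is to establish the identity in three stages of increasing generality: first for a deterministic time $t$ and a finitely-valued (step) random variable $\xi$, then for a deterministic $t$ and an arbitrary $\xi\in L^2(\Omega,\calF_t,\PR;\overline{\calO})$ by approximation, and finally for a genuine stopping time $\tau$ by a further approximation from above by finitely-valued stopping times. Throughout I write $L(\tau,\xi):=\essinf_{\beta\in\calB_{\tau,T}}\esssup_{u\in\calU_{\tau,T}}J(\tau,\xi;u,\beta[u])$, so that the goal becomes $W(\tau,\xi)=L(\tau,\xi)$, $\prs$.

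For the first stage, fix a deterministic $t$ and let $\xi=\sum_{i=1}^N x_i\one{A_i}$, where $\{A_i\}_{i=1}^N$ is an $\calF_t$-measurable partition of $\Omega$ and each $x_i\in\overline{\calO}$. Since $W(t,\cdot)$ is deterministic by \cref{pro:LowerUpperValueFunctionDeterministic}, one has $W(t,\xi)=\sum_i\one{A_i}W(t,x_i)$, $\prs$. The crucial ingredient is the locality of the cost functional: because $A_i\in\calF_t$ and the solutions of RSDE \eqref{eq:RSDEControled} and GBSDE \eqref{eq:GBSDEControlled} are unique, one has $\one{A_i}J(t,\xi;u,\beta[u])=\one{A_i}J(t,x_i;u,\beta[u])$, $\prs$, for every admissible control and strategy. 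I would then prove the two inequalities separately. For $L(t,\xi)\geq W(t,\xi)$, I multiply $\esssup_u J(t,\xi;u,\beta[u])$ by $\one{A_i}$, use that indicators of $\calF_t$-sets commute with the essential supremum to replace $\xi$ by $x_i$, bound below by $W(t,x_i)$, sum over $i$, and take the essential infimum over $\beta$. For the reverse inequality I choose, for each $i$ and each $\vp>0$, a strategy $\beta_i\in\calB_{t,T}$ that is $\vp$-optimal for $x_i$, and paste them into $\beta:=\sum_i\one{A_i}\beta_i$.

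In the second stage, for arbitrary $\xi\in L^2(\Omega,\calF_t,\PR;\overline{\calO})$ I approximate $\xi$ in $L^2$ by step random variables $\xi_m$ valued in $\overline{\calO}$. The estimate \eqref{eq:ValueFunctionalJHolderContinuousInX} is uniform in the controls, so it passes through the $\essinf$--$\esssup$ to give $|L(t,\xi_m)-L(t,\xi)|\leq C(|\xi_m-\xi|+|\xi_m-\xi|^{1/2})$, while the same modulus controls $|W(t,\xi_m)-W(t,\xi)|$ by \cref{thm:LowerValueFunctionWHolderContinuousInX}; since $L(t,\xi_m)=W(t,\xi_m)$ from the first stage, letting $m\to\infty$ yields $L(t,\xi)=W(t,\xi)$. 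In the third stage I approximate $\tau$ from above by finitely-valued stopping times $\tau_m\downarrow\tau$ taking values $t^m_1<\cdots<t^m_{k_m}$ in $\tT$. On each level set $\{\tau_m=t^m_j\}\in\calF_{t^m_j}$ I apply the second stage at the deterministic time $t^m_j$ with datum $\xi$ and glue the resulting identities over $j$, obtaining $W(\tau_m,\xi)=L(\tau_m,\xi)$; passing $m\to\infty$ I invoke the time-continuity of $W$ from \cref{thm:LowerValueFunctionWContinuousInT} together with the corresponding stability of $J$ (hence of $L$) in the time variable, which follows from the a priori estimates \eqref{eq:EstimateForXInitialEtaWithTimeDelayWithControls} for the RSDE and \cref{lem:AprioriEstimateForDifferenceOfYs} for the GBSDE.

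The main obstacle will be the gluing arguments. In the first stage one must verify that the pasted strategy $\beta=\sum_i\one{A_i}\beta_i$ is genuinely non-anticipative, which rests on $A_i\in\calF_t$ being known at the initial time, so that the concatenation respects the defining property of $\calB_{t,T}$. In the third stage the delicate point is the dependence of the admissible sets $\calU_{\tau,T}$ and $\calB_{\tau,T}$ on the random initial time, and the need to control the discretization errors uniformly across the level sets when passing to the limit. Once the non-anticipativity of the pasted objects is secured, the remaining limit passages are routine consequences of the continuity estimates already established.
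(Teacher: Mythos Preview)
Your proposal is correct and follows essentially the same route that the paper indicates: the paper does not give a detailed argument but simply records that the lemma follows from the regularity of $W$ in \cref{thm:LowerValueFunctionWHolderContinuousInX,thm:LowerValueFunctionWContinuousInT} ``similarly to Proposition~2.6 in \citet*{WuYu2014SPA} and Theorem~A.2 in \citet*{BuckdahnLi2008SICON}'', and those references carry out exactly the three-stage approximation (step $\xi$ at deterministic $t$; general $\xi$ at deterministic $t$; then finitely-valued stopping times and a limit) that you describe. Your identification of the two delicate points --- non-anticipativity of the pasted strategies $\sum_i\one{A_i}\beta_i$ and $\sum_j\one{\{\tau_m=t^m_j\}}\beta_j$, and the passage to the limit in $m$ across the varying control/strategy classes $\calU_{\tau_m,T}$, $\calB_{\tau_m,T}$ --- matches precisely where the work lies in those references.
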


We are now ready to show the following (strong) DPP. The main difference between the weak and strong versions of DPP lies in that the intermediate time in the strong version is a random time $\tau\in\calT_{t,T}$, instead of the deterministic time $t+\delta$.
\begin{thm}[Strong DPP]\label{thm:DPPStrongVersion}
	Assume that \ref{H:BSigmaBoundedWithControls} -- \ref{H:GFMonotonicInYLipschitzInZWithControls} hold. For each $(t,x)\in\tT\times\overline \calO$ and $\tau\in\calT_{t,T}$, the lower value function $W(t,x)$ enjoys the following DPP:
	\begin{equation*}
	  W(t,x)=\essinf_{\beta\in\calB_{t,\tau}}\esssup_{u\in\calU_{t,\tau}}G^{t,x;u,\beta[u]}_{t,\tau}[W(\tau,X^{t,x;u,\beta[u]}_{\tau})],\quad \prs.
	\end{equation*}
\end{thm}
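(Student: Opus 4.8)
The plan is to write $\tilde W(t,x)$ for the right-hand side and to establish the two inequalities $W(t,x)\le\tilde W(t,x)$ and $W(t,x)\ge\tilde W(t,x)$, $\prs$. The engine of both directions is the flow property \eqref{eq:FlowPropertyOfBackwardSemigropG} of the backward semigroup together with \cref{lem:LowerValueFunctionWithStoppingTimeAndRandomVariable}, which represents $W(\tau,\xi)$ as a game value started from the random datum $(\tau,\xi)$. Two stability facts will be used repeatedly: the comparison theorem \cref{lem:ComparisonTheoremForSolutionsOfGBSDEs}, which gives monotonicity of $\xi\mapsto G^{t,x;u,v}_{t,\tau}[\xi]$, and the a priori estimate \cref{lem:AprioriEstimateForDifferenceOfYs}, which gives Lipschitz dependence of $G^{t,x;u,v}_{t,\tau}[\xi]$ on the terminal datum $\xi$; together they ensure that replacing a terminal condition by one within $\varepsilon$ alters the semigroup value by at most $C\varepsilon$.

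For $W\le\tilde W$: I would fix an arbitrary $\beta\in\calB_{t,\tau}$ and $\varepsilon>0$, and glue $\beta$ on $[t,\tau]$ to a family of $\varepsilon$-optimal subgame strategies so as to produce $\hat\beta\in\calB_{t,T}$. Concretely, depending measurably on the endpoint $X^{t,x;u,\beta[u]}_\tau$, one selects via \cref{lem:LowerValueFunctionWithStoppingTimeAndRandomVariable} a strategy $\beta'\in\calB_{\tau,T}$ whose $\esssup$ over player-I controls of $J(\tau,\cdot\,;\cdot,\beta'[\cdot])$ exceeds $W(\tau,\cdot)$ by at most $\varepsilon$, and pastes these after $\tau$; the resulting $\hat\beta$ agrees with $\beta$ on $[t,\tau]$. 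By \eqref{eq:FlowPropertyOfBackwardSemigropG}, non-anticipativity, and the two stability facts, $W(t,x)\le\esssup_u J(t,x;u,\hat\beta[u])\le\esssup_u G^{t,x;u,\beta[u]}_{t,\tau}[W(\tau,X^{t,x;u,\beta[u]}_\tau)]+C\varepsilon$; minimizing the right-hand side over $\beta\in\calB_{t,\tau}$ and letting $\varepsilon\to0$ gives $W\le\tilde W$.

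For $W\ge\tilde W$: symmetrically, I would fix an arbitrary $\hat\beta\in\calB_{t,T}$, restrict it to $\beta:=\hat\beta|_{[t,\tau]}\in\calB_{t,\tau}$, and for each $u\in\calU_{t,\tau}$ and $\varepsilon>0$ extend $u$ to $\hat u\in\calU_{t,T}$ by pasting, after $\tau$, an $\varepsilon$-optimal response of player I to the tail of $\hat\beta$ in the subgame started at $(\tau,X^{t,x;u,\beta[u]}_\tau)$, again using \cref{lem:LowerValueFunctionWithStoppingTimeAndRandomVariable} and a measurable selection. The flow property and stability then yield $\esssup_{\hat u}J(t,x;\hat u,\hat\beta[\hat u])\ge G^{t,x;u,\beta[u]}_{t,\tau}[W(\tau,X^{t,x;u,\beta[u]}_\tau)]-C\varepsilon$; taking $\esssup$ over $u\in\calU_{t,\tau}$ and using $\beta\in\calB_{t,\tau}$ shows $\esssup_{\hat u}J(t,x;\hat u,\hat\beta[\hat u])\ge\tilde W-C\varepsilon$, and since $\hat\beta$ was arbitrary, $W\ge\tilde W-C\varepsilon$; letting $\varepsilon\to0$ concludes.

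The main obstacle is the measurable gluing of the $\varepsilon$-optimal strategies (resp.\ controls) across the random time $\tau$, since the optimal choice on $[\tau,T]$ depends on the random endpoint $X^{t,x;u,\beta[u]}_\tau$ and must itself be a non-anticipative strategy (resp.\ an admissible control). I would handle this by first proving the identity for stopping times $\tau$ taking finitely many values, where on each set $\{\tau=t_i\}$ the statement reduces to the weak DPP \cref{thm:DPPWeakVersion} with deterministic intermediate time $t_i$ together with \cref{rmk:InequalitiesFromWeakDPP} and a countable measurable selection, and then approximating a general $\tau\in\calT_{t,T}$ from above by discrete stopping times $\tau_n\downarrow\tau$. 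Passing to the limit $n\to\infty$ requires the joint continuity of $W$ furnished by \cref{thm:LowerValueFunctionWHolderContinuousInX,thm:LowerValueFunctionWContinuousInT}, the estimate \eqref{eq:EstimateForXInitialEtaWithTimeDelayWithControls} controlling $X$ and $\eta$ over the vanishing intervals $[\tau,\tau_n]$, and the stability estimate \cref{lem:AprioriEstimateForDifferenceOfYs} to transfer these bounds through the semigroup $G$.
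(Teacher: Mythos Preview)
Your overall architecture is correct and matches the paper's: split into two inequalities, concatenate/restrict controls and strategies across the stopping time $\tau$, apply the flow property \eqref{eq:FlowPropertyOfBackwardSemigropG}, and use the monotonicity and Lipschitz stability of the semigroup to pass $\varepsilon$'s through.

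Where you diverge is in the handling of what you call the ``measurable gluing'' obstacle. You propose to first prove the identity for finitely-valued $\tau$ (reducing to the weak DPP on each level set $\{\tau=t_i\}$) and then pass to the limit along a sequence $\tau_n\downarrow\tau$, invoking the regularity of $W$ and the stability estimates to justify the limit. This is a legitimate route, but it adds a nontrivial approximation layer: one must check that the $\essinf/\esssup$ over the varying strategy spaces $\calB_{t,\tau_n}$, $\calU_{t,\tau_n}$ converges correctly as $n\to\infty$, and this is not entirely automatic.

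The paper avoids this approximation altogether. It works directly with an arbitrary $\tau\in\calT_{t,T}$ by exploiting the fact that an $\essinf$ (resp.\ $\esssup$) over a family is always attained by a \emph{countable} subfamily. From such a countable sequence one builds an $\varepsilon$-optimal strategy (resp.\ control) as a countable indicator-sum over an $\calF_\tau$-measurable partition of $\Omega$, exactly as in the Buckdahn--Li proof of the weak DPP but now with sets in $\calF_\tau$ rather than $\calF_{t+\delta}$. The key point is that the countable-selection technique is insensitive to whether the intermediate time is deterministic or a stopping time; the only inputs are uniqueness of the RSDE/GBSDE solutions (so that indicator sums commute with $J$ and with $G$) and the comparison theorem. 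This yields the strong DPP in one pass, without discrete approximation or a limiting argument.

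In short: your plan works, but the paper's direct countable-partition construction is cleaner and sidesteps the limit passage you anticipate needing.
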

\begin{proof}
	This proof is similar to that of Theorem 3.6 in \citet*{BuckdahnLi2008SICON}, but it needs some necessary modifications. We denote the right hand side of desired identity by $W_\tau(t,x)$. 
	
	We first introduce a concatenation operation of controls. For each $\tau\in\calT_{t,T}$, $u_1(\cdot)\in\calU_{t,\tau}$ and $u_2(\cdot)\in\calU_{\tau,T}$, we define 
	\begin{equation*}
	  (u_1\oplus u_2)_s(\omega):=(u_1)_s(\omega)\one{[t,\tau(\omega)]}+(u_2)_s(\omega)\one{(\tau(\omega),T]},\quad s\in[t,T].
	\end{equation*}
	For any $\beta[\cdot]\in\calB_{t,T}$ and $u_2(\cdot)\in\calU_{\tau,T}$, we define a restriction $\beta_1[\cdot]$ of $\beta[\cdot]$ to $\calB_{t,\tau}$ as follows:
	\begin{equation*}
	  \beta_1[u_1]:=\beta[u_1\oplus u_2]\big|_{[t,\tau]},\quad u_1(\cdot)\in\calU_{t,\tau}.
	\end{equation*}
	Then we have $\beta_1[\cdot]\in\calB_{t,\tau}$. Similarly, we define the restriction $\beta_2[\cdot]\in\calB_{\tau,T}$ of $\beta[\cdot]\in\calB_{t,T}$, i.e., for each $u_1(\cdot)\in\calU_{t,\tau}$, 
	\begin{equation*}
	  \beta_2[u_2]:=\beta[u_1\oplus u_2]\big|_{(\tau,T]}, \quad u_2(\cdot)\in\calU_{\tau,T}.
	\end{equation*}
	The non-anticipativity of $\beta[\cdot]$ indicates that $\beta_1[\cdot]$ and $\beta_2[\cdot]$ are independent of the choice $u_2(\cdot)\in\calU_{\tau,T}$ and $u_1(\cdot)\in\calU_{t,\tau}$, respectively; moreover, we actually have $\beta[u_1\oplus u_2]=\beta_1[u_1]\oplus\beta_2[u_2]$ for any $u_1(\cdot)\in\calU_{t,\tau}$ and $u_2(\cdot)\in\calU_{\tau,T}$. 
	
	For each $(t,x)\in\tT\times\overline \calO$, $\tau\in\calT_{t,T}$, $u_1(\cdot)\in\calU_{t,\tau}$, $\beta_1[\cdot]\in\calB_{t,\tau}$ and $\beta_2[\cdot]\in\calB_{\tau,T}$, we define 
	\begin{equation*}
	  I_\tau(t,x,u_1,\beta_1,\beta_2):=\esssup_{u_2\in\calU_{\tau,T}}J(\tau,X^{t,x;u_1,\beta_1[u_1]}_\tau;u_2,\beta_2[u_2]).
	\end{equation*}
	Then by the definition of $W$ in \eqref{eq:DefOfValueFunctionWAndU} we get that \[W(\tau,X^{t,x;u_1,\beta_1[u_1]}_\tau)=\essinf_{\beta_2\in\calB_{\tau,T}}I_\tau(t,x,u_1,\beta_1,\beta_2),\quad\prs.\] 
	Next we start our proof and it will be divided into two steps.
	
	{\noindent\bfseries First Step:} $W(t,x)\leq W_\tau(t,x)$. 
	
	For each $(t,x)\in\tT\times\overline \calO$, $\tau\in\calT_{t,T}$, $u_1(\cdot)\in\calU_{t,\tau}$ and $\beta_1[\cdot]\in\calB_{t,\tau}$, there exists a sequence $\{\beta^j_2[\cdot]\}_{j\geq 1}\subset\calB_{\tau,T}$ such that 
	\begin{equation*}
	W(\tau,X^{t,x;u_1,\beta_1[u_1]}_\tau)=\inf_{j\geq 1}I_\tau(t,x,u_1,\beta_1,\beta^j_2),\quad \prs.
	\end{equation*}
	For each $\vp>0$ and $j\geq 1$, we put \begin{equation*}
	  \widetilde \Lambda_j:=\left\{I_\tau(t,x,u_1,\beta_1,\beta^j_2)\leq W(\tau,X^{t,x;u_1,\beta_1[u_1]}_\tau)+\vp\right\}.
	\end{equation*}
	Then $\widetilde \Lambda_j\in\calF_\tau$ for each $j\geq 1$, and $\Lambda_1:=\widetilde \Lambda_1$, $\Lambda_j:=\widetilde \Lambda_j/(\cup^{j-1}_{l=1}\widetilde \Lambda_l)$, $j\geq 2$ forms a partition of $(\Omega,\calF_\tau)$. We can also get that $\beta^\vp_2[\cdot]:=\sum_{j\geq 1}\one{\Lambda_j}\beta^j_2[\cdot]$ belongs to $\calB_{\tau,T}$. And the uniqueness of the solutions of RSDE \eqref{eq:RSDEControled} and GBSDE \eqref{eq:GBSDEControlled} implies that
	\begin{align*}
	  \sum_{j\geq 1}\one{\Lambda_j}I_\tau(t,x,u_1,\beta_1,\beta^j_2)
	  &=\esssup_{u_2\in\calU_{\tau,T}}\sum_{j\geq 1}\one{\Lambda_j}J(\tau,X^{t,x;u_1,\beta_1[u_1]}_\tau;u_2,\beta^j_2[u_2])\\
	  &=\esssup_{u_2\in\calU_{\tau,T}}J(\tau,X^{t,x;u_1,\beta_1[u_1]}_\tau;u_2,\beta^\vp_2[u_2])=I_\tau(t,x,u_1,\beta_1,\beta^\vp_2).
	\end{align*} 
	Thus, we conclude that for each $\vp>0$, $\tau\in\calT_{t,T}$, $u_1(\cdot)\in\calU_{t,\tau}$ and $\beta_1[\cdot]\in\calB_{t,\tau}$,
	\begin{equation}\label{eq:InequalityWTauXTauGeqIMinnusVp}
	W(\tau,X^{t,x;u_1,\beta_1[u_1]}_\tau)\geq \sum_{j\geq 1}\one{\Lambda_j}I_\tau(t,x,u_1,\beta_1,\beta^j_2)-\vp=I_\tau(t,x,u_1,\beta_1,\beta^\vp_2)-\vp.
	\end{equation}
	
	Next, for each $\vp>0$, $u(\cdot)\in\calU_{t,T}$ and $\beta_1[\cdot]\in\calB_{t,\tau}$, we define  $\beta[u]:=\beta_1[u_1]\oplus\beta^\vp_2[u_2]$, where $u_1:=u|_{[t,\tau]}$ and $u_2:=u|_{(\tau,T]}$. It is obvious that $\beta[\cdot]\in\calB_{t,T}$. It follows from \eqref{eq:DefOfValueFunctionWAndU} that for such defined $\beta[\cdot]\in\calB_{t,T}$, we have $\prs$, $W(t,x)\leq \esssup_{u\in\calU_{t,T}}J(t,x;u,\beta[u])$. 
	Then there exists a sequence $\{u^i(\cdot)\}_{i\geq 1}\subset\calU_{t,T}$ such that
	\begin{equation*}
    \esssup_{u\in\calU_{t,T}}J(t,x;u,\beta[u])=\sup_{i\geq 1}J(t,x;u^i,\beta[u^i]),\quad \prs.
	\end{equation*}
	For any $\vp>0$ and each $i\geq 1$, we put 
	\[\widetilde \Gamma_i:=\left\{\sup_{i\geq 1}J(t,x;u^i,\beta[u^i])\leq J(t,x;u^i,\beta[u^i])+\vp\right\}.\]
	Then $\widetilde \Gamma_i\in\calF_t$ for each $i\geq 1$, and $\Gamma_1:=\widetilde \Gamma_1$, $\Gamma_i:=\widetilde \Gamma_i/(\cup^{i-1}_{l=1}\widetilde \Gamma_l)\in\calF_t$, $i\geq 2$, forms a partition of $(\Omega,\calF_t)$. We also have that $u^\vp(\cdot):=\sum_{i\geq 1}\one{\Gamma_i}u^i(\cdot)$ belongs to $\calU_{t,T}$. Moreover, the non-anticipativity of $\beta[\cdot]$ and the uniqueness for solutions of RSDE \eqref{eq:RSDEControled} and GBSDE \eqref{eq:GBSDEControlled} yield that $\beta[u^\vp]=\sum_{i\geq 1}\one{\Gamma_i}\beta[u^i]$ and  $J(t,x;u^\vp,\beta[u^\vp])=\sum_{i\geq 1} \one{\Gamma_i} J(t,x;u^i,\beta[u^i])$, $\prs$. Hence, from \eqref{eq:FlowPropertyOfBackwardSemigropG} we deduce that 
	\begin{align*}
	  W(t,x)
	  &\leq \sum_{i\geq 1}\one{\Gamma_i}\sup_{i\geq 1}J(t,x;u^i,\beta[u^i])
  	\leq \sum_{i\geq 1}\one{\Gamma_i}J(t,x;u^i,\beta[u^i])+\vp\\
	  &=J(t,x;u^\vp,\beta[u^\vp])+\vp=G^{t,x;u^\vp,\beta[u^\vp]}_{t,\tau}[J(\tau,X^{t,x;u^\vp,\beta[u^\vp]}_\tau;u^\vp,\beta[u^\vp])]+\vp, \quad \prs,
	\end{align*} 
	where $\tau\in\calT_{t,T}$.  Note that $u^\vp=u^\vp_1\oplus u^\vp_2$ and $\beta[u^\vp]=\beta_1[u^\vp_1]\oplus\beta^\vp_2[u^\vp_2]$, where $u^\vp_1:=u^\vp|_{[t,\tau]}$, $u^\vp_2:=u^\vp|_{(\tau,T]}$ and $\beta_1[\cdot]\in\calB_{t,\tau}$. Then we have the following identity, 
	\begin{equation*}
	  G^{t,x;u^\vp,\beta[u^\vp]}_{t,\tau}[J(\tau,X^{t,x;u^\vp,\beta[u^\vp]}_\tau;u^\vp,\beta[u^\vp])]
	  =G^{t,x;u^\vp_1,\beta_1[u^\vp_1]}_{t,\tau}[J(\tau,X^{t,x;u^\vp_1,\beta_1[u^\vp_1]}_\tau;u^\vp_2,\beta_2[u^\vp_2])].
	\end{equation*}
	Thus, \cref{lem:ComparisonTheoremForSolutionsOfGBSDEs} and the inequality \eqref{eq:InequalityWTauXTauGeqIMinnusVp} yield that 
	\begin{align*}
	  W(t,x)
	  &\leq G^{t,x;u^\vp_1,\beta_1[u^\vp_1]}_{t,\tau}[I_\tau(t,x,u^\vp_1,\beta_1,\beta^\vp_2)]+\vp\\
	  %\esssup_{u_2\in\calU_{\tau,T}}J(\tau,X^{t,x;u^\vp_1,\beta_1(u^\vp_1)}_\tau;u_2,\beta_2(u_2))
	  &\leq G^{t,x;u^\vp_1,\beta_1[u^\vp_1]}_{t,\tau}[W(\tau,X^{t,x;u^\vp_1,\beta_1[u_1]}_\tau)+\vp]+\vp\\
	  &\leq G^{t,x;u^\vp_1,\beta_1[u^\vp_1]}_{t,\tau}[W(\tau,X^{t,x;u^\vp_1,\beta_1[u_1]}_\tau)]+(C+1)\vp,\quad\prs.
	\end{align*}
	Therefore, by the definition of $W_\tau(t,x)$ we obtain that $\prs$, $W(t,x)\leq W_\tau(t,x)+(C+1)\vp$. Finally, sending $\vp\to0$ yields the desired result.
	
	{\noindent\bfseries Second Step: }$W(t,x)\geq W_\tau(t,x)$.
	
	For each $(t,x)\in\tT\times\overline \calO$, $\tau\in\calT_{t,T}$, $u_1(\cdot)\in\calU_{t,\tau}$, $\beta_1[\cdot]\in\calB_{t,\tau}$ and $\beta_2[\cdot]\in\calB_{\tau,T}$, there exists a sequence $\{u^i_2(\cdot)\}_{i\geq 1}\subset\calU_{\tau,T}$ such that 
	$I_\tau(t,x,u_1,\beta_1,\beta_2)=\sup_{i\geq 1}J(\tau,X^{t,x;u_1,\beta_1[u_1]}_\tau;u^i_2,\beta_2[u^i_2])$. For each $\vp>0$ and $i\geq 1$, we put 
	\begin{equation*}
	  \widetilde \varGamma_i:=\left\{I_\tau(t,x,u_1,\beta_1,\beta_2)\leq J(\tau,X^{t,x;u_1,\beta_1[u_1]}_\tau;u^i_2,\beta_2[u^i_2])+\vp\right\}.
	\end{equation*} 
	Then $\widetilde \varGamma_i\in\calF_\tau$ for each $i\geq 1$, and $\varGamma_1:=\widetilde \varGamma_1$, $\varGamma_i:=\widetilde \varGamma_i/(\cup^{i-1}_{l=1}\widetilde \varGamma_i)$, $i\geq 2$ forms a partition of $(\Omega,\calF_\tau)$. We can also get that $u^\vp_2(\cdot):=\sum_{i\geq 1}\one{\varGamma_i}u^i_2(\cdot)$ belongs to $\calU_{\tau,T}$, $\beta_2[u^\vp_2]=\sum_{i\geq 1}\one{\varGamma_i}\beta_2[u^i_2]$. 
	%and $J(\tau,X^{t,x;u_1,\beta_1[u_1]}_\tau;u^\vp_2,\beta_2[u^\vp_2])=\sum_{i\geq 1}\one{\varGamma_i}J(\tau,X^{t,x;u_1,\beta_1}_\tau;u^i_2,\beta_2[u^i_2])$. 
	Thus, the uniqueness for solution of GBSDE implies that
	\begin{align*}
	  I_\tau(t,x,u_1,\beta_1,\beta_2)
	  \leq \sum_{i\geq 1}\one{\varGamma_i}J(\tau,X^{t,x;u_1,\beta_1[u_1]}_\tau;u^i_2,\beta_2[u^i_2])+\vp
	  =J(\tau,X^{t,x;u_1,\beta_1[u_1]}_\tau;u^\vp_2,\beta_2[u^\vp_2])+\vp.
	\end{align*}
	
	It follows from the definition of $W(t,x)$ in \eqref{eq:DefOfValueFunctionWAndU} that there exists a sequence $\{\beta^i[\cdot]\}_{i\geq 1}\subset\calB_{t,T}$ satisfying 
	\begin{equation*}
	  W(t,x)=\inf_{i\geq 1}\esssup_{u\in\calU_{t,T}} J(t,x;u,\beta^i[u]).
	\end{equation*} 
	For each $\vp>0$ and $i\geq 1$, we put 
	\begin{equation*}
	  \widetilde \varLambda_i:=\bigg\{\esssup_{u(\cdot)\in\calU_{t,T}}J(t,x;u,\beta^i[u])-\vp\leq W(t,x)\bigg\}.
	\end{equation*}
	Then $\widetilde{\varLambda}_i\in\calF_t$ for each $i\geq 1$, and $\varLambda_1:=\widetilde \varLambda_1$, $\varLambda_i:=\widetilde \varLambda_i/(\cup^{i-1}_{l=1}\widetilde \varLambda_l)$, $i\geq 2$ forms a partition of $(\Omega,\calF_t)$. We also have that $\beta^\vp[\cdot]:=\sum_{i\geq 1}\one{\varLambda_i}\beta_i[\cdot]$ belongs to $\calB_{t,T}$. And the uniqueness of RSDE \eqref{eq:RSDEControled} and GBSDE \eqref{eq:GBSDEControlled} implies that $J(t,x;u,\beta^\vp[u])=\sum_{i\geq 1}\one{\varLambda_i}J(t,x;u,\beta^i[u])$. Thus, \eqref{eq:FlowPropertyOfBackwardSemigropG} indicates that, for each $\tau\in\calT_{t,T}$,
	\begin{align*}
	  W(t,x)
	  &\geq \sum_{i\geq 1}\one{\varLambda_i}\esssup_{u\in\calU_{t,T}}J(t,x;u,\beta^i[u])-\vp
	  \geq \sum_{i\geq1}\one{\varLambda_i}J(t,x;u,\beta^i[u])-\vp\\
	  &=J(t,x;u,\beta^\vp[u])-\vp
	  =G^{t,x;u,\beta^\vp[u]}_{t,\tau}[J(\tau,X^{t,x;u,\beta^\vp[u]}_\tau;u,\beta^\vp[u])]-\vp,\quad \prs.
	\end{align*}
	Take $u:=u_1\oplus u^\vp_2$ for each $u_1(\cdot)\in\calU_{t,\tau}$. Let $\beta^\vp_1[\cdot]$ and $\beta^\vp_2[\cdot]$ be the restriction of $\beta^\vp[\cdot]$ to $\calB_{t,\tau}$ and $\calB_{\tau,T}$, respectively. Then we have $\beta^\vp[u]=\beta^\vp[u_1\oplus u^\vp_2]=\beta^\vp_1[u_1]\oplus\beta^\vp_2[u^\vp_2]$. Hence, we obtain the following identity,
	\begin{equation*}
	  G^{t,x;u,\beta^\vp[u]}_{t,\tau}[J(\tau,X^{t,x;u,\beta^\vp[u]}_\tau;u,\beta^\vp[u])]
	  =G^{t,x;u_1,\beta^\vp_1[u_1]}_{t,\tau}[J(\tau,X^{t,x;u_1,\beta^\vp_1[u_1]}_\tau;u^\vp_2,\beta^\vp_2[u^\vp_2])].
	\end{equation*}
	Then, we deduce that 
	\begin{align*}
	  W(t,x)
	  &\geq G^{t,x;u_1,\beta^\vp_1[u_1]}_{t,\tau}[I_\tau(t,x,\beta^\vp_1,\beta^\vp_2)-\vp]-\vp\\
	  &\geq G^{t,x;u_1,\beta^\vp_1[u_1]}_{t,\tau}[\essinf_{\beta_2\in\calB_{t,\tau}}I_\tau(t,x,\beta^\vp_1,\beta_2)]-(C+1)\vp\\
	  &=G^{t,x;u_1,\beta^\vp_1[u_1]}_{t,\tau}[W(\tau,X^{t,x;u_1,\beta^\vp_1[u_1]}_\tau)]-(C+1)\vp, \quad \prs.
	\end{align*}
	Therefore, by the definition of $W_\tau(t,x)$ we derive that $\prs$, $W(t,x)\geq W_\tau(t,x)-(C+1)\vp$. Then sending $\vp\to0$ yields that desired result.
\end{proof}

\section{Viscosity solutions of Isaacs equations}
\label{sec:ViscositySolutionOfHJBI}
This section aims at proving the lower value function $W(t,x)$ and upper value function $U(t,x)$ are, respectively, the unique viscosity solution of following Hamilton-Jacobi-Bellman-Isaacs equations:
\begin{equation}\label{eq:HJBIEquationWithLowerValueFunctionW}
  \begin{cases}
    \displaystyle 
    \partial_tW(t,x)+H^-(t,x,W,\nabla W,D^2W)=0,&(t,x)\in[0,T)\times\calO,\\
    \displaystyle 
    \frac{\partial}{\partial n}W(t,x)+\sup_{u\in U}\inf_{v\in V}f(t,x,W(t,x),u,v)=0,& (t,x)\in[0,T)\times\partial\calO,\\
    \displaystyle 
    W(T,x)=\Phi(x),& x\in\overline{\calO},
  \end{cases}
\end{equation}
and 
\begin{equation}\label{eq:HJBIEquationWithUpperValueFunctionW}
\begin{cases}
\displaystyle 
\partial_tU(t,x)+H^+(t,x,U,\nabla U,D^2U)=0,&(t,x)\in[0,T)\times\calO,\\
\displaystyle 
\frac{\partial}{\partial n}U(t,x)+\inf_{v\in V}\sup_{u\in U}f(t,x,U(t,x),u,v)=0,& (t,x)\in[0,T)\times\partial\calO,\\
\displaystyle 
U(T,x)=\Phi(x),& x\in\overline{\calO},
\end{cases}
\end{equation}
where the Hamiltonians and operator $\partial/\partial n$ are defined as follows, for each $t\in\tT$, $x\in\overline \calO$, $y\in\rtn$, $p\in\rtn^n$ and $A\in S^n$ with $S^n$ being the set of all $n\times n$ symmetric matrices,
\begin{align*}
  H^-(t,x,y,p,A)
  &:=\sup_{u\in U}\inf_{v\in V}\Big\{\frac{1}{2}Tr\{\sigma\sigma^*(t,x,u,v)A\}+\langle b(t,x,u,v),p\rangle+g(t,x,y,\sigma^*p,u,v)\Big\},\\
  H^+(t,x,y,p,A)
  &:=\inf_{v\in V}\sup_{u\in U}\Big\{\frac{1}{2}Tr\{\sigma\sigma^*(t,x,u,v)A\}+\langle b(t,x,u,v),p\rangle+g(t,x,y,\sigma^*p,u,v)\Big\},\\
  \frac{\partial}{\partial n}
  &:=\sum^{n}_{i=1}\frac{\partial\phi}{\partial x_i}(x)\frac{\partial}{\partial x_i}.
\end{align*}
For this purpose, we provide a new approach --- the representation theorem for generators of GBSDEs, instead of Peng's approximation method introduced by \citet*{Peng1997SuiJiFenXiXuanJiang}. The representation theorem approach is more convenient. 

Let us recall the viscosity solution of PDE \eqref{eq:HJBIEquationWithLowerValueFunctionW}, which is adapted from \citet*{CrandallIshiiLions1992BAMS} and \citet*{BuckdahnLi2008SICON}. The counterpart of PDE \eqref{eq:HJBIEquationWithUpperValueFunctionW} is analogous.
\begin{dfn}\label{dfn:ViscositySolutionOfHJBIEquationsWithLowerValueFunction}
  A function $W\in C(\tT\times\overline \calO;\rtn)$ is called a viscosity sub- (resp., super-) solution of \eqref{eq:HJBIEquationWithLowerValueFunctionW} if $W(T,x)\leq \Phi(x)$ for all $x\in\overline \calO$ (resp., $W(T,x)\geq \Phi(x)$), and for any $\varphi\in C^{1,2}(\tT\times\overline \calO;\rtn)$ such that whenever $(t,x)\in[0,T)\times\overline \calO$ is global maximum (resp., minimum) of $W-\varphi$, we have 
  \begin{equation}\label{eq:InequalityViscositySubsolutionOfHJBI}
    \begin{cases}
    \displaystyle
    \partial_t\varphi(t,x)+H^-(t,x,W,\nabla\varphi,D^2\varphi)\geq 0,\quad x\in\calO;\\
    \displaystyle
    [\partial_t\varphi(t,x)+H^-(t,x,W,\nabla\varphi,D^2\varphi)]\vee\bigg[\frac{\partial\varphi}{\partial n}(t,x)+\sup_{u\in U}\inf_{v\in V}f(t,x,W,u,v)\bigg]\geq 0,\  x\in\partial\calO,
    \end{cases}
  \end{equation}
  \begin{equation}\label{eq:InequalityViscositySupersolutionOfHJBI}
  \left(\text{resp.,}
    \begin{cases}
    \displaystyle 
    \partial_t\varphi(t,x)+H^-(t,x,W,\nabla\varphi,D^2\varphi)\leq 0,\quad x\in\calO;\\
    \displaystyle 
    [\partial_t\varphi(t,x)\!+H^-(t,x,W,\nabla\varphi,D^2\varphi)]\wedge\bigg[\frac{\partial\varphi}{\partial n}(t,x)\!+\sup_{u\in U}\inf_{v\in V}f(t,x,W,u,v)\bigg]\!\!\leq 0,\  x\in\partial\calO\!\!
    \end{cases}\!\!\!\!\!\right)\!.
  \end{equation}
  Moreover, a function $W\in C(\tT\times\overline \calO;\rtn)$ is called a viscosity solution of \eqref{eq:HJBIEquationWithLowerValueFunctionW} if it is both a viscosity subsolution and a viscosity supersolution. 
\end{dfn}

\subsection{Viscosity solution of Isaacs equation: Existence result}
Here we only prove that the lower value function $W(t,x)$ is a viscosity solution of PDE \eqref{eq:HJBIEquationWithLowerValueFunctionW} since the proof of $U(t,x)$ being a viscosity solution of PDE \eqref{eq:HJBIEquationWithUpperValueFunctionW} is symmetric. The representation theorem for generators of GBSDEs play an essential role in proof. 
\begin{thm}\label{thm:ExistenceResultForViscositySOlutionsOfHJBIEquations}
	Assume that \ref{H:BSigmaBoundedWithControls} -- \ref{H:GFMonotonicInYLipschitzInZWithControls} hold. Then the lower value function $W(t,x)$ is a viscosity solution of PDE \eqref{eq:HJBIEquationWithLowerValueFunctionW}. 
\end{thm}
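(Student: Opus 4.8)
The plan is to show that $W$ is simultaneously a viscosity sub- and supersolution of \eqref{eq:HJBIEquationWithLowerValueFunctionW}; the two verifications are symmetric, so I treat the subsolution case and indicate the dual changes at the end. Continuity of $W$ on $\tT\times\overline\calO$ is already supplied by \cref{thm:LowerValueFunctionWHolderContinuousInX,thm:LowerValueFunctionWContinuousInT}, and $W(T,x)=\Phi(x)$ is immediate from the definition of $J$ at $s=T$. Fix $\varphi\in C^{1,2}(\tT\times\overline\calO;\rtn)$ and a global maximum point $(t,x)\in[0,T)\times\overline\calO$ of $W-\varphi$; adding a constant we may assume $W(t,x)=\varphi(t,x)$, so that $W\le\varphi$ everywhere. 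The heart of the argument is to evaluate the \emph{strong} DPP (\cref{thm:DPPStrongVersion}) at the random time $\tau_{t+\vp}$ --- precisely the stopping time featuring in \cref{thm:RepresentationTheoremForGenerator} --- and then to read off the infinitesimal behaviour of the backward semigroup by the representation theorem. It is here that the strong DPP, rather than the weak one, becomes indispensable.

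The engine is an It\^o computation. For constant controls $u\in U$, $v\in V$, let $(X_s,\eta_s)$ solve \eqref{eq:RSDEControled} from $(t,x)$ and set $\Delta_s:=G^{t,x;u,v}_{s,\tau_{t+\vp}}[\varphi(\tau_{t+\vp},X_{\tau_{t+\vp}})]-\varphi(s,X_s)$. Applying It\^o's formula to $\varphi(s,X_s)$ and using $\langle\nabla\varphi,\nabla\phi\rangle=\partial\varphi/\partial n$ on $\partial\calO$ (which carries $\dif\eta$), one checks that $\Delta$ solves a GBSDE with null terminal value whose $\dif s$-generator is (up to the shift by $\varphi$, $\sigma^*\nabla\varphi$) $\partial_t\varphi+\tfrac12 Tr\{\sigma\sigma^*D^2\varphi\}+\langle b,\nabla\varphi\rangle+g$ and whose $\dif\eta$-generator is $\partial\varphi/\partial n+f$; both inherit \ref{A:GFContinuousInY}--\ref{A:GFConditionsInYZ} from \ref{H:GFContinuousWithControls}--\ref{H:GFMonotonicInYLipschitzInZWithControls} together with the smoothness and boundedness of $\varphi,b,\sigma$ on $\overline\calO$. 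Taking $A_\cdot=\eta_\cdot$ in \cref{thm:RepresentationTheoremForGenerator} and evaluating \eqref{eq:RepThmForGBSDEIdentityType} at $y=z=0$ yields, for $\dif t$-a.e.\ $t$,
\begin{equation*}
  \lim_{\vp\to0^+}\frac1\vp\big(G^{t,x;u,v}_{t,\tau_{t+\vp}}[\varphi]-\varphi(t,x)\big)=a_t\,I^{u,v}+b_t\,N^{u,v},
\end{equation*}
where $I^{u,v}:=\partial_t\varphi+\tfrac12 Tr\{\sigma\sigma^*D^2\varphi\}+\langle b,\nabla\varphi\rangle+g(t,x,W,\sigma^*\nabla\varphi,u,v)$, $N^{u,v}:=\partial\varphi/\partial n+f(t,x,W,u,v)$, and $(a_t,b_t)$ are the weights of \cref{lem:TimeChangeToAbsoluteContinuity} with $a_t+b_t=1$, $a_t>0$.

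Next I inject this into the strong DPP. Since $W\le\varphi$, the comparison theorem \cref{lem:ComparisonTheoremForSolutionsOfGBSDEs} lets me replace $W(\tau_{t+\vp},\cdot)$ by $\varphi(\tau_{t+\vp},\cdot)$ in \cref{thm:DPPStrongVersion}, producing $0\le\essinf_{\beta}\esssup_{u}\tfrac1\vp\big(G^{t,x;u,\beta[u]}_{t,\tau_{t+\vp}}[\varphi]-\varphi(t,x)\big)$. Letting $\vp\to0^+$, collapsing the $\essinf_\beta\esssup_u$ over strategies into the pointwise $\sup_u\inf_v$ over $U\times V$ by the measurable-selection and concatenation arguments of \citet*{BuckdahnLi2008SICON} (valid under \ref{H:BSigmaBoundedWithControls}--\ref{H:GFMonotonicInYLipschitzInZWithControls}), and handling the control-dependence of the weights, the goal is to reach $0\le a_t\,I+b_t\,N$ with $I:=\partial_t\varphi+H^-(t,x,W,\nabla\varphi,D^2\varphi)$ and $N:=\partial\varphi/\partial n+\sup_u\inf_v f(t,x,W,u,v)$. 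The decisive observation is that $I$ and $N$ are deterministic numbers while $a_t,b_t\ge0$ sum to one, so a nonnegative convex combination of them cannot have both entries negative; hence $I\vee N\ge0$, which is exactly \eqref{eq:InequalityViscositySubsolutionOfHJBI}. If $x\in\calO$, the state does not reach $\partial\calO$ before a strictly positive hitting time, so $\eta$ does not grow near $t$, forcing $b_t=0$, $a_t=1$ a.s.\ and the interior inequality $I\ge0$. The supersolution property follows verbatim from a global minimum of $W-\varphi$, with $\le$, $\wedge$ and the dual $\esssup\essinf$ in place of $\ge$, $\vee$ and $\essinf\esssup$.

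I expect two genuine obstacles. First, the limit and the weights $(a_t,b_t)$ both depend on the controls through $\eta^{u,v}$, so pulling $\sup_u\inf_v$ through the $a_t$/$b_t$ split is not automatic; this must be combined with the reduction of $\essinf_\beta\esssup_u$ to $\sup_u\inf_v$ and with the interchange of $\lim_{\vp\to0^+}$ and the optimizations, the latter justified through the $L^p$-convergence in \cref{thm:RepresentationTheoremForGenerator} and the a priori estimate \cref{lem:AprioriEstimateForDifferenceOfYs}. This reconciliation, sharpest on $\partial\calO$ where $b_t>0$, is the principal difficulty. Second, \cref{thm:RepresentationTheoremForGenerator} is valid only for $\dif t$-a.e.\ $t$, whereas the test time $t$ is prescribed; since $W$, $\varphi$ and the Hamiltonians are continuous, I would bridge this by contradiction --- assuming $I\vee N<0$ strictly at $(t,x)$, propagating the strict sign to a space--time neighbourhood by continuity, and contradicting the DPP integrated over that neighbourhood, on which the representation holds almost everywhere.
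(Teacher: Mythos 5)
Your architecture coincides with the paper's: strong DPP at the stopping time $\tau_{t+\vp}$ built from $\psi_s=\eta_s+s$, comparison to replace $W$ by $\varphi$, It\^o's formula to recast the backward semigroup increment as a GBSDE with $\dif s$-generator $G$ and $\dif\eta$-generator $F$, and then \cref{thm:RepresentationTheoremForGenerator} to read off the infinitesimal limit $a\,G+b\,F$. The one structural correction concerns the order of your closing argument. The intermediate goal ``$0\le a_tI+b_tN$ with $I,N$ the already-optimized deterministic Hamiltonians'' is not reachable in the forward direction, for exactly the reason you flag as the principal difficulty: the weights $(a_t,b_t)$ are random and depend on the controls through $\eta^{t,x;u,v}$, and the $\essinf_\beta\esssup_u$ does not distribute across the two summands. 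The paper does not attempt this; it runs the entire verification by contradiction from the outset. Assuming $I\vee N<-\vp_0$, both $\sup_{u}\inf_{v}G(t,x,0,0,u,v)$ and $\sup_{u}\inf_{v}F(t,x,0,u,v)$ are below $-\vp_0$, so one selects a single measurable $\bar\beta:U\mapsto V$ making $G(\cdot,u,\bar\beta[u])$ and $F(\cdot,u,\bar\beta[u])$ simultaneously $\le-\vp_0/2$ on a time neighbourhood by uniform continuity; then $a\,G+b\,F\le-\vp_0/2$ for \emph{any} nonnegative weights summing to one, and this contradicts the lower bound produced by the DPP together with \eqref{eq:RepThmForGBSDEIntegralType}, \cref{pro:ApproximationForOverlineG} and the estimate on $|X_{\tau_r}-x|$. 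In other words, the contradiction device you propose only to repair the ``$\dte$'' defect of the representation theorem is precisely what dissolves your first obstacle as well, since uniform negativity of both generators makes the convex combination negative irrespective of the weights. Two smaller points: the paper uses the integral form \eqref{eq:RepThmForGBSDEIntegralType} rather than \eqref{eq:RepThmForGBSDEIdentityType}, because the near-optimal controls supplied by the DPP are genuine processes (not constants) and the state moves, so the generator must be compared to its frozen value via \cref{pro:ApproximationForOverlineG} and the modulus of continuity in $(s,u,v)$; and your treatment of the interior case ($b_t=0$, $a_t=1$, $\eta$ not charging a neighbourhood of $t$) matches the paper's.
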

\begin{proof}
	The continuity of $W(t,x)$ in $(t,x)$ follows from \cref{thm:LowerValueFunctionWHolderContinuousInX,thm:LowerValueFunctionWContinuousInT}. For given initial data $(t,x)\in[0,T)\times\overline \calO$, we define $\psi:\Omega\times[t,T]\mapsto\rtn^+$ as $\psi_s:=\eta^{t,x;u,v}_s+s$. Its inverse function is denoted by $\tau_\cdot$. We know that for each given $r\in[t,\psi_T]$, $\tau_r\in\calT_{t,T}$. 
	
	{\bfseries First Step.} This step aims to prove that $W(t,x)$ is a viscosity subsolution of \eqref{eq:HJBIEquationWithLowerValueFunctionW}. Take any $\varphi\in C^{1,2}(\tT\times\overline \calO;\rtn)$ and $(t,x)\in[0,T)\times\overline \calO$ such that $W-\varphi$ achieves the global maximum at $(t,x)$. Without loss of generality, we assume $W(t,x)=\varphi(t,x)$. Since $W(T,x)=\Phi(x)$ is trivially satisfied for all $x\in\overline \calO$, we only need to prove \eqref{eq:InequalityViscositySubsolutionOfHJBI}. 
	
	It follows from \cref{thm:DPPStrongVersion} that for each $0<\delta\leq T-t$, 
	\begin{align*}
	  \varphi(t,x)=W(t,x)
	  =\essinf_{\beta\in\calB_{t,\tau_{t+\delta}}}\esssup_{u\in\calU_{t,\tau_{t+\delta}}}G^{t,x;u,\beta[u]}_{t,\tau_{t+\delta}}[W(\tau,X^{t,x;u,\beta[u]}_{\tau_{t+\delta}})],\quad \prs.
	\end{align*}
	The fact $W\leq\varphi$ and \cref{lem:ComparisonTheoremForSolutionsOfGBSDEs} imply that 
	\begin{equation}\label{eq:InfSupGVarphiXMinusVarphi}
	  \essinf_{\beta\in\calB_{t,\tau_{t+\delta}}}\esssup_{u\in\calU_{t,\tau_{t+\delta}}}\left\{G^{t,x;u,\beta[u]}_{t,\tau_{t+\delta}}[\varphi(\tau_{t+\delta},X^{t,x;u,\beta[u]}_{\tau_{t+\delta}})]-\varphi(t,x)\right\}\geq 0,\quad\prs.
	\end{equation}
	For each $u(\cdot)\in\calU_{t,\tau_{t+\delta}}$ and $v(\cdot)\in\calV_{t,\tau_{t+\delta}}$, we denote $Y^{u,v,\delta}_t:=G^{t,x;u,v}_{t,\tau_{t+\delta}}[\varphi(\tau_{t+\delta},X^{t,x;u,v}_{\tau_{t+\delta}})]$, which is a solution of the following GBSDE,
	\begin{align*}
	  Y^{u,v,\delta}_t={}&\varphi(\tau_{t+\delta},X^{t,x;u,v}_{\tau_{t+\delta}})+\int^{\tau_{t+\delta}}_tg(r,X^{t,x;u,v}_r,Y^{u,v,\delta}_r,Z^{u,v,\delta}_r,u_r,v_r)\dif r\\
	  &+\int^{\tau_{t+\delta}}_tf(r,X^{t,x;u,v}_r,Y^{u,v,\delta}_r,u_r,v_r)\dif\eta^{t,x;u,v}_r-\int^{\tau_{t+\delta}}_t\langle Z^{t,x;u,v}_r,\dif B_r\rangle.
	\end{align*}
	It\^o's formula to $\varphi(r,X^{t,x;u,v}_r)$ yields that
	\begin{align*}
	  \varphi(t,x)={}&\varphi(\tau_{t+\delta},X^{t,x;u,v}_{\tau_{t+\delta}})
	  -\int^{\tau_{t+\delta}}_t\langle \nabla\varphi(r,,X^{t,x;u,v}_r),\sigma(r,X^{t,x;u,v}_r,u_r,v_r)\dif B_r\rangle
	  -\int^{\tau_{t+\delta}}_t\partial_r\varphi(r,X^{t,x;u,v}_r)\dif r\\
	  &\!\!-\int^{\tau_{t+\delta}}_t\!\!\Big[\frac{1}{2}Tr\{\sigma\sigma^*(r,X^{t,x;u,v}_r\!,u_r,v_r)D^2\varphi(r,X^{t,x;u,v}_r)\}\!+\!\langle b(r,X^{t,x;u,v}_r\!,u_r,v_r),\nabla\varphi(r,X^{t,x;u,v}_r)\rangle\Big]\!\dif r\\
	  &\!\!-\int^{\tau_{t+\delta}}_t\frac{\partial\varphi}{\partial n}(r,X^{t,x;u,v}_r)\one{\partial \calO}(X^{t,x;u,v}_r)\dif\eta^{t,x;u,v}_r.
	\end{align*}
	Next we set
	\[\hat Y^{u,v,\delta}_\cdot:=Y^{u,v,\delta}_\cdot-\varphi(\cdot,X^{t,x;u,v}_\cdot), \quad
	\hat Z^{u,v,\delta}_\cdot:=Z^{u,v,\delta}_\cdot-\sigma^*(\cdot,X^{t,x;u,v}_\cdot,u_\cdot,v_\cdot)\nabla\varphi(\cdot,X^{t,x;u,v}_\cdot).\]
	Hence, we deduce that 
	\begin{align}
	  \hat Y^{u,v,\delta}_t={}&\int^{\tau_{t+\delta}}_tG(r,X^{t,x;u,v}_r,\hat Y^{u,v,\delta}_r,\hat Z^{u,v,\delta}_r,u_r,v_r)\dif r\nonumber\\
	  &+\int^{\tau_{t+\delta}}_tF(r,X^{t,x;u,v}_r,\hat Y^{u,v,\delta}_r,u_r,v_r)\one{\partial\calO}(X^{t,x;u,v}_r)\dif\eta^{t,x;u,v}_r-\int^{\tau_{t+\delta}}_r\langle \hat Z^{u,v,\delta}_r,\dif B_r\rangle,\label{eq:GBSDEForIsaacsEqViscositySolutionEquvToBackwardGroup}
	\end{align}
	where for each $r\in\tT$, $\theta\in M$, $u\in U$ and $v\in V$,
	\begin{align*}
	  G(r,\theta,u,v):={}&\partial_r\varphi(r,x)+g(r,x,y+\varphi(r,x),z+\sigma^*(r,x,u,v)\nabla\varphi(r,x),u,v)\\
	  &+\frac{1}{2}Tr\{\sigma\sigma^*(r,x,u,v)D^2\varphi(r,x)\}+\langle b(r,x,u,v),\nabla\varphi(r,x)\rangle,\\
	  F(r,x,y,u,v):={}&f(r,x,y+\varphi(r,x),u,v)+\frac{\partial\varphi}{\partial n}(t,x).
	\end{align*}
	Then \eqref{eq:RepThmForGBSDEIntegralType} in \cref{thm:RepresentationTheoremForGenerator} indicates that there exists a pair of real-valued positive processes $(a_\cdot,b_\cdot)$ with $a_\cdot+b_\cdot=1$ and $a_\cdot>0$ such that for $\dte$ $t\in[0,T)$, 
	\begin{equation}\label{eq:RepresentationAlphaGAndBetaFEquivHatY}
	\lim_{\delta\to0}\frac{1}{\delta}\EX\bigg[\hat Y^{u,v,\delta}_t -\int^{t+\delta}_th(\tau_r,X^{t,x;u,v}_{\tau_r},u_{\tau_r},v_{\tau_r})\dif r\bigg]=0,
	\end{equation}
	where we set
	\[h(\tau_r,X^{t,x;u,v}_{\tau_r}\!,u_{\tau_r},v_{\tau_r}):=a_{\tau_r}G(\tau_r,X^{t,x;u,v}_{\tau_r}\!,0,0,u_{\tau_r},v_{\tau_r})+b_{\tau_r}F(\tau_r,X^{t,x;u,v}_{\tau_r}\!,0,u_{\tau_r},v_{\tau_r}).\]
	Thus, for each $\vp_1>0$ there exists a small enough $\delta>0$ such that 
	\begin{equation}\label{eq:InequalityAfterRepThmForIsaacsEqVsicositySubsolution}
	\frac{1}{\delta}\EX\left[\hat Y^{u,v,\delta}_t\right]\leq\EX\bigg[\frac{1}{\delta}\int^{t+\delta}_th(\tau_r,X^{t,x;u,v}_{\tau_r},u_{\tau_r},v_{\tau_r})\dif r\bigg]+\vp_1.
	\end{equation}
	On the other hand, according to  \eqref{eq:InfSupGVarphiXMinusVarphi} we know that for each $0<\delta\leq T-t$, 
	\begin{equation*}%\label{eq:InequalityAfterRepThmForIsaacsEqVsicositySubsolution}
	\essinf_{\beta\in\calB_{t,\tau_{t+\delta}}}\esssup_{u\in\calU_{t,\tau_{t+\delta}}}\hat Y^{u,\beta[u],\delta}_t\geq 0, \quad\prs.
	\end{equation*}
	Analogous to the arguments of proofs in \cref{thm:DPPStrongVersion}, we can obtain that for each $\beta[\cdot]\in\calB_{t,\tau_{t+\delta}}$and $\vp>0$, there exists a  $u^\vp(\cdot)\in\calU_{t,\tau_{t+\delta}}$ such that 
	\begin{equation*}
	\hat Y^{u^\vp,\beta[u^\vp],\delta}_t\geq-\delta\vp,\quad \prs.
	\end{equation*}
	Plugging the previous inequality into \eqref{eq:InequalityAfterRepThmForIsaacsEqVsicositySubsolution} yields that for each $\vp_1>0$, there exists a small enough $0<\delta\leq T-t$ such that the following inequality
	\begin{equation}\label{eq:InequalityForContradictionIsaacsViscositySubsolution}
	-\vp\leq\EX\bigg[\frac{1}{\delta}\int^{t+\delta}_th(\tau_r,X^{t,x;u^\vp,\beta[u^\vp]}_{\tau_r},u^\vp_{\tau_r},\beta_{\tau_r}[u^\vp_{\tau_\cdot}])\dif r\bigg]+\vp_1
	\end{equation}
	holds for each $\beta[\cdot]\in\calB_{t,\tau_{t+\delta}}$ and $\vp>0$. 
	
	We first consider the case $(t,x)\in[0,T)\times\partial\calO$. In this case we need to prove 
	\begin{equation*}
	  [\partial_t\varphi(t,x)+H^-(t,x,W,D\varphi,D^2\varphi)]\vee\left[\frac{\partial\varphi}{\partial n}(t,x)+\sup_{u\in U}\inf_{v\in V}f(t,x,W,u,v)\right]\geq 0.
	\end{equation*}  
	Let us suppose that this is not true. Then there exists a $\vp_0>0$ such that
	\begin{equation*}
	  \sup_{u\in U}\inf_{v\in V}G(t,x,0,0,u,v)<-\vp_0<0,\quad 
	  \sup_{u\in U}\inf_{v\in V}F(t,x,0,u,v)<-\vp_0<0.
	\end{equation*}
	Thus, we can find a measurable function $\bar\beta:U\mapsto V$ such that, for all $u\in U$, 
	\begin{equation*}
	  G(t,x,0,0,u,\bar\beta[u])<-\vp_0<0,\quad \frac{\partial\varphi}{\partial n}(t,x)+f(t,x,W(t,x),u,\bar\beta[u])<-\vp_0<0.
	\end{equation*} 
	By putting $\bar\beta_s[u](\omega):=\bar\beta[u_s(\omega)]$, $(s,\omega)\in[t,\tau_{t+\delta}]\times\Omega$, we identify $\bar\beta$ as an element of $\calB_{t,\tau_{t+\delta}}$. Then taking $v_t(\omega):=\bar\beta_t[u](\omega)$ in  \eqref{eq:InequalityForContradictionIsaacsViscositySubsolution}, and considering that $h(r,x,u,v)$ is continuous and of linear growth in $x$, we can deduce by \cref{pro:ApproximationForOverlineG} that there exists a $(\calF_{\tau_t})$-progressively measurable process sequence $\{(h^n_t)_{t\in\tT}\}^\infty_{n=1}$ such that
	\begin{equation}\label{eq:InequalityForContradictionIsaacsViscositySubsolutionBeforeLimit}
	-\vp\leq\EX\bigg[\frac{1}{\delta}\int^{t+\delta}_t2n|X^{t,x;u^\vp,\bar \beta[u^\vp]}_{\tau_r}-x|+h^n_r+h(\tau_r,x,u^\vp_{\tau_r},\bar\beta_{\tau_r}[u^\vp_{\tau_\cdot}])\dif r\bigg]+\vp_1.
	\end{equation}
	It follows from \cref{pro:EstimateForRSDEHatXEta} that 
	\begin{align}
	&\frac{2n}{\delta}\EX\bigg[\int^{t+\delta}_t|X^{t,x;u^\vp,\bar \beta[u^\vp]}_{\tau_r}-x|\dif r\bigg]
	\leq 2n\EX\bigg[\sup_{r\in[t,t+\delta]}|X^{t,x;u^\vp,\bar \beta[u^\vp]}_{\tau_r}-x|\bigg]\nonumber\\
	&\quad=2n\EX\bigg[\sup_{r\in[t,\tau_{t+\delta}]}|X^{t,x;u^\vp,\bar \beta[u^\vp]}_r-x|\bigg]
	\leq2n\EX\bigg[\sup_{r\in[t,t+\delta]}|X^{t,x;u^\vp,\bar \beta[u^\vp]}_r-x|\bigg]
	\leq 2n\delta^{1\over2}.\label{eq:EstimateForXTaurForIsaacsViscosity}
	\end{align}
	And we can derive from Proposition 2.2 in \citet*{Jiang2008AAP} that for each $n\geq 0$ and $\dte$ $t\in\tT$, 
	\begin{equation}\label{eq:HnSequenceLebesgueLemmaForIsaacsEq}
	\lim_{\delta\to0}\EX\bigg[\frac{1}{\delta}\int^{t+\delta}_th^n_r\dif r\bigg]=\EX[h^n_t].
	\end{equation}
	Then \cref{pro:ApproximationForOverlineG} indicates that $\EX[h^n_t]\to 0$ when $n\to\infty$. Moreover, since $G(\cdot,x,0,0,\cdot,\cdot)$ and  $F(\cdot,x,0,\cdot,\cdot)$ are uniformly continuous in $\tT\times U\times V$, there exists a $\delta_1>0$ with $\delta_1\leq T-t$ such that for each $u\in U$, $t\leq s\leq t+\delta_1$,
	\begin{gather*}
	G(s,x,0,0,u,\bar\beta[u])\leq -\frac{\vp_0}{2},\quad
	F(s,x,0,u,\bar\beta[u])\leq -\frac{\vp_0}{2},
	\end{gather*}
	whence we have
	\[h(s,x,u,\bar{\beta}[u])\leq-\frac{\vp_0}{2}.\]
	After taking $0<\delta\leq\delta_1$, we get that $t\leq \tau_r\leq r\leq t+\delta\leq t+\delta_1$, 
	\[h(\tau_r,x,u^\vp_{\tau_r},\bar\beta_{\tau_r}[u^\vp_\cdot])<-\frac{\vp_0}{2}, \quad\prs.\]
	Hence, we take $\vp_1=\vp_0/4$ in  \eqref{eq:InequalityForContradictionIsaacsViscositySubsolutionBeforeLimit}, and then send $\delta\to0$, $n\to\infty$ and $\vp\to0$, obtaining $\vp_0\leq 0$, which contradicts with $\vp_0>0$. Therefore, the desired conclusion holds true. 
	
	Now we consider the case $(t,x)\in[0,T)\times\calO$, in which $F$, $\eta^{t,x;u,v}_\cdot$ and $b_\cdot$ vanish and $a_\cdot\equiv 1$. It only needs to prove 
	$\partial_t\varphi(t,x)+H^-(t,x,W(t,x),\nabla\varphi,D^2\varphi)\geq 0$. If this is not true, we can still obtain the previous contradiction. So the desired result follows. 
	
	{\bfseries Second Step.} Now we prove that $W(t,x)$ is a viscosity supersolution of PDE \eqref{eq:HJBIEquationWithLowerValueFunctionW}. Take any $\varphi\in C^{1,2}(\tT\times\overline \calO;\rtn)$ and $(t,x)\in[0,T)\times\overline \calO$ such that $W-\varphi$ achieves the global minimum $0$ at $(t,x)$. We only need to prove \eqref{eq:InequalityViscositySupersolutionOfHJBI}. Similar to the first step, for each $0<\delta\leq T-t$, we have that 
	\begin{equation}\label{eq:InfSupGVarphiXMinusVarphiLeqZero}
	\essinf_{\beta\in\calB_{t,\tau_{t+\delta}}}\esssup_{u\in\calU_{t,\tau_{t+\delta}}}\left\{G^{t,x;u,\beta[u]}_{t,\tau_{t+\delta}}[\varphi(\tau_{t+\delta},X^{t,x;u,\beta[u]}_{\tau_{t+\delta}})]-\varphi(t,x)\right\}\leq 0, \quad \prs,
	\end{equation}
	and the equation \eqref{eq:GBSDEForIsaacsEqViscositySolutionEquvToBackwardGroup} and identity \eqref{eq:RepresentationAlphaGAndBetaFEquivHatY} still hold. Then for each $\vp_1>0$ there exists a small enough $\delta>0$ such that 
	\begin{equation}\label{eq:InequalityAfterRepThmForIsaacsEqVsicositySupersolution}
	\frac{1}{\delta}\EX\left[\hat Y^{u,v,\delta}_t\right]
	\geq\EX\bigg[\frac{1}{\delta}\int^{t+\delta}_th(\tau_r,X^{t,x;u,v}_{\tau_r},u_{\tau_r},v_{\tau_r})\dif r\bigg]-\vp_1.
	\end{equation}
	Furthermore, it follows from the inequality  \eqref{eq:InfSupGVarphiXMinusVarphiLeqZero} that for each $0<\delta\leq T-t$, 
	\begin{equation*}%\label{eq:InequalityAfterRepThmForIsaacsEqVsicositySubsolution}
	\essinf_{\beta\in\calB_{t,\tau_{t+\delta}}}\esssup_{u\in\calU_{t,\tau_{t+\delta}}}\hat Y^{u,\beta[u],\delta}_t\leq 0,\quad \prs.
	\end{equation*}
	Analogous to the arguments of proofs in \cref{thm:DPPStrongVersion}, we can obtain that for each $\vp>0$, there exists a  $\beta^\vp[\cdot]\in\calB_{t,\tau_{t+\delta}}$ such that $\prs$, $\hat Y^{u,\beta^\vp[u],\delta}_t\leq\delta\vp$ holds for each $u(\cdot)\in\calU_{t,\tau_{t+\delta}}$. Plugging the previous inequality into  \eqref{eq:InequalityAfterRepThmForIsaacsEqVsicositySupersolution} yields that for each $\vp_1>0$ there exists a small enough $0<\delta\leq T-t$ such that 
	\begin{equation}\label{eq:InequalityForContradictionIsaacsViscositySupersolution}
	\vp\geq\EX\bigg[\frac{1}{\delta}\int^{t+\delta}_th(\tau_r,X^{t,x;u,\beta^\vp[u]}_{\tau_r},u_{\tau_r},\beta^\vp_{\tau_r}[u_\cdot])\dif r\bigg]-\vp_1
	\end{equation}
	holds for each $u(\cdot)\in\calU_{t,\tau_{t+\delta}}$ and $\vp>0$. 
	
	We first consider the case of $(t,x)\in[0,T)\times\partial\calO$. In this case we only need to prove  
	\begin{equation*}
	[\partial_t\varphi(t,x)+H^-(t,x,W,\nabla\varphi,D^2\varphi)]\wedge\left[\frac{\partial\varphi}{\partial n}(t,x)+\sup_{u\in U}\inf_{v\in V}f(t,x,W,u,v)\right]\leq 0.
	\end{equation*}  
	We assume that the previous inequality does not hold. Then there exists a $\vp_0>0$ such that,
	\begin{equation*}
	\sup_{u\in U}\inf_{v\in V}G(t,x,0,0,u,v)>\vp_0,\quad 
	\sup_{u\in U}\inf_{v\in V}F(t,x,0,u,v)>\vp_0.
	\end{equation*}
	Thus, there exists a $\bar u\in U$ such that for each $\beta[\cdot]\in\calB_{t,\tau_{t+\delta}}$, 
	\begin{equation*}
	G(t,x,0,0,\bar u,\beta[\bar u])>\vp_0,\quad %\frac{\partial\varphi}{\partial n}(t,x)+f(t,x,W(t,x),u,\beta[u])<-\vp_0<0.
	F(t,x,0,\bar u,\beta[\bar u])>\vp_0.
	\end{equation*} 
	Since $G(\cdot,x,0,0,\cdot,\cdot)$ and $F(\cdot,x,0,\cdot,\cdot)$ is uniformly continuous in $\tT\times U\times V$, there exists a $\delta_1>0$ with $\delta_1\leq T-t$ such that for each  $\beta[\cdot]\in\calB_{t,\tau_{t+\delta}}$ and $t\leq s\leq t+\delta_1$,
	\begin{gather*}
	G(s,x,0,0,\bar u,\beta[\bar u])\geq \frac{\vp_0}{2},\quad
	F(s,x,0,\bar u,\beta[\bar u])\geq \frac{\vp_0}{2},
	\end{gather*}
	thereby, we get that
	\[h(s,x,\bar u,\beta[\bar u])\geq\frac{\vp_0}{2}.\]
	Taking $u(\cdot):=\bar u$ in \eqref{eq:InequalityForContradictionIsaacsViscositySupersolution}, and considering that $h(r,x,u,v)$ is continuous and of linear growth in $x$, we can deduce from \cref{pro:ApproximationForOverlineG} that there exists a $(\calF_{\tau_t})$-progressively measurable process sequence  $\{(h^n_t)_{t\in\tT}\}^\infty_{n=1}$ such that for small enough $0<\delta\leq \delta_1$,
	\begin{equation*}%\label{eq:InequalityForContradictionIsaacsViscositySupersolutionBeforeLimit}
	\vp\geq\EX\bigg[\frac{1}{\delta}\int^{t+\delta}_t-2n|X^{t,x;\bar u,\beta^\vp[\bar u]}_{\tau_r}-x|-h^n_r+h(\tau_r,x,\bar u,\beta^\vp_{\tau_r}[\bar u])\dif r\bigg]-\vp_1.
	\end{equation*}
	Applying \eqref{eq:EstimateForXTaurForIsaacsViscosity} and \eqref{eq:HnSequenceLebesgueLemmaForIsaacsEq}, and sending $\delta\to0$ imply that for  $\dte$ $t\in\tT$,
	\begin{equation*}
	\vp\geq -\EX[h^n_t]+\frac{\vp_0}{2}-\vp_1.
	\end{equation*} 
	We take $\vp_1=\vp_0/4$, and send $n\to\infty$ and $\vp\to0$, obtaining  $\vp_0\leq 0$, which contradicts with $\vp_0>0$. Then the desired result holds. 
	
	Now we consider the case of $(t,x)\in[0,T)\times\calO$, in which $F$, $\eta^{t,x;u,v}_\cdot$ and $b_\cdot$ vanish, and $a_\cdot\equiv 1$. It reduces to prove $\partial_t\varphi(t,x)+H^-(t,x,W(t,x),\nabla\varphi,D^2\varphi)\leq 0$.	If this is not true, we can still obtain the previous contradiction. Therefore, $W(t,x)$ is a viscosity supersolution of PDE \eqref{eq:HJBIEquationWithLowerValueFunctionW}. 
\end{proof}

\subsection{Viscosity solution of Isaacs equation: Uniqueness result}
This subsection provides the uniqueness for the viscosity solution of PDEs \eqref{eq:HJBIEquationWithLowerValueFunctionW} and \eqref{eq:HJBIEquationWithUpperValueFunctionW}. To obtain the uniqueness we only need to prove a comparison theorem for viscosity subsolutions and supersolutions. For this purpose, we will adapt some methods of \citet*{BarlesBuckdahnPardoux1997SSR}, \citet*{MaCvitanic2001JAMSA} and \citet*{BuckdahnLi2008SICON} to our settings. Here we only consider PDE \eqref{eq:HJBIEquationWithLowerValueFunctionW} since the case of PDE \eqref{eq:HJBIEquationWithUpperValueFunctionW} is analogous. 
 
\begin{thm}\label{thm:ComparisonTheoremForViscositySubSuperSolutions}
	Assume that \ref{H:BSigmaBoundedWithControls} -- \ref{H:GFMonotonicInYLipschitzInZWithControls} hold, and $g(t,\theta,u,v)$ is nondecreasing in $y$ for each $(t,x,z,u,v)\in\tT\times\overline \calO\times\rtn^d\times U\times V$. Let $w_1$ and $w_2$ be, respectively, a viscosity subsolution and supersolution of PDE \eqref{eq:HJBIEquationWithLowerValueFunctionW}. Then we have $w_1(t,x)\leq w_2(t,x)$ for all $(t,x)\in\tT\times\overline \calO$.
\end{thm}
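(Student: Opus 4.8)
The plan is to argue by contradiction via the classical doubling-of-variables technique together with the parabolic theorem on sums (Crandall--Ishii lemma), the only nonstandard ingredient being the treatment of the nonlinear Neumann condition on $\partial\calO$. Suppose $M:=\sup_{\tT\times\overline\calO}(w_1-w_2)>0$. Since $\overline\calO$ is compact and $w_1,w_2$ are continuous, $M$ is attained, and the terminal inequalities $w_1(T,\cdot)\le\Phi\le w_2(T,\cdot)$ force every maximiser to have time coordinate strictly less than $T$, so the interior parabolic viscosity inequalities of \cref{dfn:ViscositySolutionOfHJBIEquationsWithLowerValueFunction} are available there. Because the monotonicity constants $\lambda_1,\lambda_2$ in \ref{H:ItemGFMonotonicInYWithControls} need not be negative, I would first perform the standard exponential change of unknown $\tilde w_i(t,x):=e^{\mu t}w_i(t,x)$ (and subtract a perturbation of the form $\eta e^{\kappa(T-t)}$) with $\mu,\kappa$ large relative to $\lambda_1,\lambda_2,K$; the hypothesis that $g$ is nondecreasing in $y$ makes the equation proper in the zeroth-order slot, so this renders $w_1$ a strict subsolution with a strictly dissipative effective monotonicity while perturbing the right-hand sides only by controllable amounts. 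It then suffices to reach a contradiction for the perturbed pair and afterwards let the perturbation parameters vanish.

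Next I would double the space variable: for $\epsilon>0$ set $\Psi_\epsilon(t,x,y):=w_1(t,x)-w_2(t,y)-\tfrac{1}{2\epsilon}|x-y|^2$ (carrying along the strict-subsolution term), and let $(\bar t,\bar x,\bar y)\in[0,T)\times\overline\calO\times\overline\calO$ be a maximiser. The usual estimates give $\tfrac1\epsilon|\bar x-\bar y|^2\to0$ and $w_1(\bar t,\bar x)-w_2(\bar t,\bar y)\to M>0$ as $\epsilon\to0$. Writing $p:=(\bar x-\bar y)/\epsilon$, the parabolic theorem on sums produces $(a_1,p,X_1)\in\overline{\calP}^{2,+}w_1(\bar t,\bar x)$ and $(a_2,p,X_2)\in\overline{\calP}^{2,-}w_2(\bar t,\bar y)$ with $a_1-a_2$ equal to the time-derivative of the perturbation and the matrix bound $\bigl(\begin{smallmatrix}X_1&0\\0&-X_2\end{smallmatrix}\bigr)\le\tfrac3\epsilon\bigl(\begin{smallmatrix}I&-I\\-I&I\end{smallmatrix}\bigr)$.

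The heart of the proof is the case analysis according to whether $\bar x,\bar y$ lie in $\calO$ or on $\partial\calO$. When both lie in $\calO$ (or, at a boundary point, the first branch of the $\vee$/$\wedge$ in \eqref{eq:InequalityViscositySubsolutionOfHJBI}--\eqref{eq:InequalityViscositySupersolutionOfHJBI} is active), I would subtract the two interior inequalities and bound the difference $H^-(\bar t,\bar x,w_1,p,X_1)-H^-(\bar t,\bar y,w_2,p,X_2)$. Using $\sup\inf-\sup\inf\le\sup\sup|\,\cdot\,|$, the Lipschitz continuity of $b,\sigma$ in $x$ from \ref{H:BSigmaLipschitzInXWithControls}, that of $g$ in $(x,z)$ from \ref{H:ItemPhiGFLipschitzInZWithControls}, the one-sided monotonicity \ref{H:ItemGFMonotonicInYWithControls}, and the matrix inequality to control $\tfrac12\mathrm{Tr}\{\sigma\sigma^*(\bar x)X_1-\sigma\sigma^*(\bar y)X_2\}\le \tfrac{C}\epsilon|\bar x-\bar y|^2$, every term is $o(1)$ as $\epsilon\to0$ except the favorable dissipative term manufactured by the rescaling, yielding $0<c\le o(1)$, a contradiction.

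The genuine obstacle is the boundary branch, i.e.\ showing that the Neumann line of \eqref{eq:InequalityViscositySubsolutionOfHJBI} cannot obstruct the argument when $\bar x\in\partial\calO$ (and symmetrically for $\bar y$). Here the geometric inequality \eqref{eq:InequalityOfBoundedSetO1} is decisive. Perturbing the test function by a term $\theta\phi$ (with sign chosen to force the desired branch) and applying \eqref{eq:InequalityOfBoundedSetO1} with $x=\bar x\in\partial\calO$ and $x'=\bar y$ gives $\langle\nabla\phi(\bar x),p\rangle\le\tfrac{C_0}{2\epsilon}|\bar x-\bar y|^2\to0$, so the normal derivative of the penalty is controlled; combined with $|\nabla\phi(\bar x)-\nabla\phi(\bar y)|\le C|\bar x-\bar y|$ (boundedness of $D^2\phi$) this shows the oblique-derivative contributions vanish in the limit. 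Subtracting the two Neumann inequalities then leaves only $\sup_u\inf_v f(\bar t,\bar x,w_1,u,v)-\sup_u\inf_v f(\bar t,\bar y,w_2,u,v)$, which by \ref{H:ItemPhiGFLipschitzInZWithControls} and the $\lambda_2$-monotonicity in \ref{H:ItemGFMonotonicInYWithControls} is $\le C|\bar x-\bar y|+\lambda_2\bigl(w_1(\bar t,\bar x)-w_2(\bar t,\bar y)\bigr)$; once the rescaling has made the effective $\lambda_2$ strictly negative, this again contradicts $M>0$. The delicate point is to organise the parameters so that one rescaling simultaneously defeats both the interior and the boundary branch while the auxiliary term $\theta\phi$ still disappears in the final limits; I would follow the scheme of \citet{BarlesBuckdahnPardoux1997SSR} and \citet{BuckdahnLi2008SICON}. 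Letting $\epsilon\to0$ and then the remaining parameters to $0$ yields $M\le0$, that is $w_1\le w_2$ on $\tT\times\overline\calO$.
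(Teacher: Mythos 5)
Your overall architecture (doubling of variables, Crandall--Ishii lemma, exponential properization in time) is the classical comparison scheme, but it is genuinely different from what the paper does: the paper never confronts the Neumann branch at all. It restricts to the interior sets $\calO^\alpha=\{x\in\overline\calO:d(x,\partial\calO)\ge\alpha\}$, shows in \cref{lem:AuxiliaryPDEInUniquenessViscosityProof} that $w=w_1-w_2$ is a viscosity subsolution of a boundary-free Hamilton--Jacobi inequality on $[0,T)\times\calO^\alpha$ with $w(T,\cdot)\le0$, and then compares $w$ with the explicit strict supersolution $\vp\chi$, $\chi(t,x)=\exp\{(C_1(T-t)+1)\psi(x)\}$, iterating backward over time intervals of length $1/C_1$ and finally letting $\alpha\to0$. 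Your route attacks the boundary head-on, and that is precisely where your argument has a genuine gap.

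The gap is the treatment of the Neumann branch when $\bar x$ (or $\bar y$) lies on $\partial\calO$. First, the exponential change of unknown in time cannot make the boundary nonlinearity dissipative: the Neumann line of \eqref{eq:HJBIEquationWithLowerValueFunctionW} contains no $\partial_t$, so under $\tilde w_i=e^{\mu t}w_i$ the transformed nonlinearity $e^{\mu t}f(t,x,e^{-\mu t}\tilde y,u,v)$ has exactly the same one-sided constant $\lambda_2$ from \ref{H:ItemGFMonotonicInYWithControls} as before; the ``effective $\lambda_2$ strictly negative'' you invoke is therefore not produced by the rescaling you describe, and for $\lambda_2\ge0$ subtracting the two Neumann inequalities yields only $0\le\lambda_2M+o(1)$, which is no contradiction. (A spatial gauge such as $e^{-\lambda\phi(x)}w$, which shifts the normal derivative by $\lambda w$ on $\partial\calO$, is what would actually be needed.) Second, to \emph{exclude} the Neumann branch and force the PDE branch of the $\vee/\wedge$ in \eqref{eq:InequalityViscositySubsolutionOfHJBI} one must make $\frac{\partial\varphi}{\partial n}+\sup_{u}\inf_{v}f$ strictly negative at $\bar x$; since $\sup_u\inf_v f$ is an $O(1)$ quantity there, the coefficient $\theta$ in your perturbation $\theta\phi$ must be bounded below by $\sup|f|$, so it cannot be sent to $0$, and its $O(\theta)$ contributions to the interior inequality then do not vanish. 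Third, the mixed case --- the subsolution activating the Neumann branch at $\bar x\in\partial\calO$ while the supersolution activates the PDE branch at $\bar y$, or vice versa --- is not addressed, and those two inequalities do not subtract to anything useful. Finally, the references you lean on for the delicate bookkeeping, \citet{BarlesBuckdahnPardoux1997SSR} and \citet{BuckdahnLi2008SICON}, treat problems without Neumann boundary conditions, so they do not supply the missing construction; what is needed is Barles-type test functions for oblique-derivative problems (as used, e.g., in \citet{PardouxZhang1998PTRF} and \citet{LiTang2015ESAIMCOCV}), which your sketch does not reproduce.
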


\begin{proof}
	Since both of $w_1$ and $w_2$ are continuous and $\overline\calO$ is compact, we only need to show $w_1(t,x)\leq w_2(t,x)$ in $\tT\times\calO$. Define a subset of $\calO$ as $\calO^\alpha:=\{x\in\overline \calO:d(x,\partial\calO)\geq\alpha\}$ for each $\alpha>0$. We choose a $\alpha_0>0$ such that $\calO^\alpha\neq\varnothing$ for each $0<\alpha\leq \alpha_0$. It then reduces to prove that for each $0<\alpha\leq\alpha_0$, $w_1(t,x)\leq w_2(t,x)$ for all $(t,x)\in\tT\times\calO^\alpha$. We first show the following auxiliary lemma.
	\begin{lem}\label{lem:AuxiliaryPDEInUniquenessViscosityProof}
		Suppose that the assumptions of \cref{thm:ComparisonTheoremForViscositySubSuperSolutions} are in force. Then for each $0<\alpha\leq\alpha_0$, the function $w(t,x)=w_1(t,x)-w_2(t,x)$, $(t,x)\in\tT\times\calO^\alpha$ is a viscosity subsolution of the following PDE:
		\begin{equation*}
		  \begin{cases}
		    \partial_t w(t,x)+H_w(t,x,w,\nabla w,D^2w)=0,\quad (t,x)\in[0,T)\times\calO^\alpha,\\
		    w(T,x)=0,\quad x\in\calO^\alpha,
		  \end{cases}
		\end{equation*}
		where for each $(t,x,w,p,A)\in\tT\times\calO^\alpha\times\rtn\times\rtn^n\times S^n$,
		\[H_w(t,x,w,p,A):=\sup_{u\in U,v\in V}\bigg\{\frac{1}{2}Tr\{\sigma\sigma^*(t,x,u,v)A\}+\langle b(t,x,u,v),p\rangle+\widetilde K|w|+\widetilde K|p||\sigma(t,x,u,v)|\bigg\},\]
		and $\widetilde K$ is a positive constant depending on $K$ and $\lambda_1$.
	\end{lem}
  \begin{proof}[{\bf Proof of \cref{lem:AuxiliaryPDEInUniquenessViscosityProof}}]
  	Let us fix arbitrarily a $\alpha\in(0,\alpha_0]$. Take any $\varphi\in C^{1,2}(\tT\times\calO^\alpha;\rtn)$ such that $(t_0,x_0)\in[0,T)\times\calO^\alpha$ is a global maximum point of $w-\varphi$. Note that $w_1(T,x)\leq \Phi(x)$ and $w_2(T,x)\geq\Phi(x)$. Then we have that $w(T,x)\leq 0$. By the definition of viscosity subsolution, it is sufficient to prove the following inequality,
  	\begin{equation*}
  	  \partial_tw(t_0,x_0)+H_w(t_0,x_0,w,\nabla w,D^2w)|_{(t,x)=(t_0,x_0)}\geq 0.
  	\end{equation*}
  	Next we introduce the following notations of parabolic second order semijets of function $w$ at $(t,x)$. 
  	\begin{align*}
  	  \calP^{2,+}w(t,x):={}&\{(\partial_t\varphi(t,x),\nabla\varphi(t,x),D^2\varphi(t,x)):\varphi\in C^{1,2}(\tT\times\calO^\alpha;\rtn)\\
  	  & \text{ and } w-\varphi \text{ has a global maximum at } (t,x)\};\\
  	  \calP^{2,-}w(t,x):={}&\{(\partial_t\varphi(t,x),\nabla\varphi(t,x),D^2\varphi(t,x)):\varphi\in C^{1,2}(\tT\times\calO^\alpha;\rtn)\\ &\text{ and } w-\varphi \text{ has a global minimum at } (t,x)\}.
  	\end{align*}
  	With the same notations, we record the closures of the superjets $\calP^{2,+}$ as follows, and the closures of the subjets $\calP^{2,-}$ can be defined similarly.
  	\begin{align*}
  	\overline \calP^{2,+}w(t,x)={}&\{(p,q,X)\in\rtn\times\rtn^n\times S^n: \text{ there exists a sequence  } (t_n,x_n,p_n,q_n,X_n)\in\\ &[0,T)\times \calO^\alpha\times\rtn\times\rtn^n\times S^n \text{ such that } (p_n,q_n,X_n)\in\calP^{2,+}w(t_n,x_n) \\
  	&\text{ and } (t_n,x_n,p_n,q_n,X_n)\to(t,x,p,q,X)\}.
  	\end{align*}
  	Now let us define a function as follows, for each $\vp>0$, $t\in\tT$ and $x$, $y\in\calO^\alpha$,
  	\begin{equation*}
  	  \psi_\vp(t,x,y):=w_1(t,x)-w_2(t,y)-\frac{|x-y|^2}{\vp}-\varphi(t,x).
  	\end{equation*}
  	We suppose that $(t^\vp,x^\vp,y^\vp)$ is the maximum point of $\psi_\vp$ for each $\vp>0$. Then Proposition 3.7 of \citet*{CrandallIshiiLions1992BAMS} implies that $(t^\vp,x^\vp,y^\vp)$ tends to $(t_0,x_0,x_0)$ and $|x^\vp-y^\vp|^2/\vp$ tends to $0$ as $\vp\to0$. Moreover, Theorem 8.3 in \citet*{CrandallIshiiLions1992BAMS} indicates that there exist $(X,Y)\in S^n\times S^n$ and $a\in\rtn^n$ such that
  	\begin{equation*}
  	  (a+\partial_t\varphi(t^\vp,x^\vp),p^\vp+\nabla\varphi(t^\vp,x^\vp),X)\in\overline \calP^{2,+}w_1(t^\vp,x^\vp),\quad (a,p^\vp,Y)\in\overline \calP^{2,-}w_2(t^\vp,y^\vp),
  	\end{equation*}
  	\begin{equation}\label{eq:MartixInequalityForHamiltonPDE}
  	  \begin{bmatrix}
      	X & 0\\
      	0 &-Y
    	\end{bmatrix}
  	  \leq \frac{2}{\vp}
  	  \begin{bmatrix}
      	I & -I\\
      	-I & I
    	\end{bmatrix}+
    	\begin{bmatrix}
      	D^2\varphi(t^\vp,x^\vp) & 0\\
       	0 & 0
    	\end{bmatrix},
  	\end{equation}
  	where $p^\vp:=2(x^\vp-y^\vp)/\vp$ and $I$ denotes the identity matrix. Then the definitions of semijets yield that
  	\begin{equation}\label{eq:HamiltonPDESubSuperSolution}
    	\begin{cases}
  	  a+\partial_t\varphi(t^\vp,x^\vp)+F(t^\vp,x^\vp,w_1(t^\vp,x^\vp),p^\vp+\nabla\varphi(t^\vp,x^\vp),X)\geq 0,\\
  	  a+F(t^\vp,y^\vp,w_2(t^\vp,y^\vp),p^\vp,Y)\leq 0.
    	\end{cases}
  	\end{equation}
  	For brevity, we denote, for each $u\in U$ and $v\in V$,
  	\begin{align*}
  	  I^{\vp,u,v}_1&:=\frac{1}{2}Tr\{\sigma\sigma^*(t^\vp,x^\vp,u,v)X\}-\frac{1}{2}Tr\{\sigma\sigma^*(t^\vp,y^\vp,u,v)Y\},\\
  	  I^{\vp,u,v}_2&:=\langle b(t^\vp\,x^\vp,u,v),p^\vp+\nabla\varphi(t^\vp,x^\vp)\rangle-\langle b(t^\vp,y^\vp,u,v),p^\vp\rangle,\\
  	  I^{\vp,u,v}_3&:=g(t^\vp,x^\vp,w_1(t^\vp,x^\vp),\sigma^*(t^\vp,x^\vp,u,v)(p^\vp+\nabla\varphi(t^\vp,x^\vp)),u,v)-g(t^\vp,y^\vp,w_2(t^\vp,y^\vp),\sigma^*(t^\vp,y^\vp)p^\vp,u,v).
  	\end{align*}
  	Subtracting the second inequality from the first one in \eqref{eq:HamiltonPDESubSuperSolution} yields that 
  	\begin{equation}\label{eq:InequalitySubMinusSuperSolutionsWithControls}
  	  \partial_t\varphi(t^\vp,x^\vp)+\sup_{u\in U,v\in V}\left\{I^{\vp,u,v}_1+I^{\vp,u,v}_2+I^{\vp,u,v}_2\right\}\geq 0.
  	\end{equation}
  	Then it follows from the inequality \eqref{eq:MartixInequalityForHamiltonPDE} and \ref{H:BSigmaLipschitzInXWithControls} that 
  	\begin{equation*}
  	  I^{\vp,u,v}_1\leq \frac{1}{2}Tr\{\sigma\sigma^*(t^\vp,x^\vp,u,v)D^2\varphi(t^\vp,x^\vp)\}+\frac{|x^\vp-y^\vp|^2}{\vp}.
  	\end{equation*}
  	And from \ref{H:BSigmaLipschitzInXWithControls} we can derive that
  	\begin{align*}
  	  I^\vp_2={}&\langle b(t^\vp,x^\vp,u,v)-b(t^\vp,y^\vp,u,v),p^\vp\rangle+\langle b(t^\vp,x^\vp,u,v),\nabla\varphi(t^\vp,x^\vp)\rangle\nonumber\\
  	  \leq{}& K\frac{|x^\vp-y^\vp|^2}{\vp}+\langle b(t^\vp,x^\vp,u,v),\nabla\varphi(t^\vp,x^\vp)\rangle.
  	\end{align*}
  	Since $g$ satisfies the monotonicity condition in \ref{H:ItemGFMonotonicInYWithControls} and $g(t,x,y,z,u,v)$ is nondecreasing in $y$, we have 
  	\[g(t,x,y_1,z,u,v)-g(t,x,y_2,z,u,v)\leq |\lambda_1||y_1-y_2|.\] 
  	Hence, the previous inequality, \ref{H:ItemPhiGFLipschitzInZWithControls} and \ref{H:BSigmaLipschitzInXWithControls} imply that
  	\begin{align*}
  	  I^{\vp,u,v}_3&\leq K|x^\vp-y^\vp|+|\lambda_1||w_1(t^\vp,x^\vp)-w_2(t^\vp,y^\vp)|+K^2\frac{|x^\vp-y^\vp|^2}{\vp}+K|\sigma(t^\vp,x^\vp,u,v)||\nabla\varphi(t^\vp,x^\vp)|.
  	\end{align*}
  	Thereby, plugging the estimates of $I^{\vp,u,v}_i$ $(i=1,2,3)$ into \eqref{eq:InequalitySubMinusSuperSolutionsWithControls} and then sending $\vp\to0$, we conclude that 
  	\begin{align*}
  	  &\partial_t\varphi(t_0,x_0)+\sup_{u\in U,v\in V}\Big\{\frac{1}{2}Tr\{\sigma\sigma^*(t_0,x_0,u,v)D^2\varphi(t_0,x_0)\}+\langle b(t_0,x_0,u,v),\nabla\varphi(t_0,x_0)\rangle\\
    	&+|\lambda_1||w(t_0,x_0)|+K|\sigma(t_0,x_0,u,v)||\nabla\varphi(t_0,x_0)|\Big\}\geq 0.
  	\end{align*}
  	After choosing $\widetilde K:=K+|\lambda_1|$, we obtain the desired result.
  \end{proof}
  
  {\noindent\bf Coming back to the proof of \cref{thm:ComparisonTheoremForViscositySubSuperSolutions}.}  With \cref{lem:AuxiliaryPDEInUniquenessViscosityProof} in hand, we only need to prove that for any $0< \alpha\leq\alpha_0$, $w(t,x)\leq 0$ holds for each $(t,x)\in\tT\times\calO^\alpha$. Let us fix arbitrarily a $\alpha\in(0,\alpha_0]$. For each $(t,x)\in\tT\times\calO^\alpha$, we define the function $\chi(t,x):=\exp\{(C_1(T-t)+1)\psi(x)\}$, where $\psi(x):=(\log\sqrt{1+|x|^2}+1)^2$ and constant $C_1>0$ will be chosen later. By some direct computations we have the following estimates of the first and second derivatives of $\psi(x)$,
  \begin{equation*}
  |\nabla\psi(x)|\leq\frac{2\sqrt{\psi(x)}}{\sqrt{1+|x|^2}}\leq 4; \qquad
  |D^2\psi(x)|\leq\frac{2\sqrt{\psi(x)}}{1+|x|^2}\leq 4.
  \end{equation*}
  In what follows, we denote $t_1:=T-1/C_1$. Then the previous estimates yield that, for each $t\in[t_1,T]$,
  \begin{align*}
  \partial_t\chi(t,x)&=-C_1\psi(x)\chi(t,x);\quad |\nabla\chi(t,x)|=(C_1(T-t)+1)\chi(t,x)|\nabla\psi(t,x)|\leq 8\chi(t,x),\\
  |D^2\chi(t,x)|&=|(C_1(T-t)+1)^2\chi(t,x)\nabla\psi(x)(\nabla\psi(x))^*+(C_1(T-t)+1)\chi(t,x)D^2\psi(x)|
  \leq 72\chi(t,x).
  \end{align*}
  Noticing that $b$ and $\sigma$ satisfy \ref{H:BSigmaBoundedWithControls} and $\psi(x)$, $\chi(t,x)\geq 1$. We denote the boundedness of $b$ and $\sigma$ by $C\geq 0$. Then we can obtain that for each $(t,x)\in[t_1,T]\times\calO^\alpha$,
  \begin{align}
  & \partial_t\chi(t,x)+ H_w(t,x,\chi,\partial_t\chi,\nabla\chi,D^2\chi) \nonumber\\
  &\leq -C_1\psi(x)\chi(t,x) +\frac{1}{2}C^2|D^2\chi(t,x)|+C|\nabla\chi(t,x)|+\widetilde K\chi(t,x)+\widetilde K C|\nabla\chi(t,x)|\nonumber\\
  &\leq \chi(t,x)\{-C_1+36C^2+8C(1+\widetilde K)+\widetilde K\}<0,\label{eq:HamiltonPDEPluggedInChi}
  \end{align}
  provided $C_1$ is large enough. 
  
  For each $\vp>0$, we define 
  \begin{equation*}
  M^\alpha(\vp):=\max_{[t_1,T]\times\calO^\alpha}\big\{\big(w(t,x)-\vp\chi(t,x)\big)\me^{-\widetilde K(T-t)}\big\}.
  \end{equation*}
  In the sequel, we will prove that $M^\alpha(\vp)\leq0$. Note that  $[t_1,T]\times\calO^\alpha$ is compact, $w(t,x)$ and $\chi(t,x)$ are continuous. Then for each $\vp>0$, there exists $(t^\vp,x^\vp)\in[t_1,T]\times\calO^\alpha$ attains the maximum point $M^\alpha(\vp)$. For each $(t,x)\in[t_1,T]\times\calO^\alpha$, define $\varphi(t,x):=\vp\chi(t,x)+M^\alpha(\vp)\exp\{\widetilde K(T-t)\}$. Then $\varphi\in C^{1,2}([t_1,T]\times\calO^\alpha;\rtn)$, $\varphi(t^\vp,x^\vp)=w(t^\vp,x^\vp)$ and $w(t,x)-\varphi(t,x)\leq 0$ for all $(t,x)\in[t_1,T]\times\calO^\alpha$. 
  We suppose that $M^\alpha(\vp)>0$ for some $\vp>0$. Then $w(t^\vp,x^\vp)=\varphi(t^\vp,x^\vp)>0$, and by the definition of viscosity subsolution we have \[\partial_t\varphi(t^\vp,x^\vp)+H_w(t^\vp,x^\vp,w,\nabla\varphi,D^2\varphi)|_{(t,x)=(t^\vp,x^\vp)}\geq 0.\] 
  On the other hand, by some direct calculations and \eqref{eq:HamiltonPDEPluggedInChi} we derive that 
  \begin{align*}
    &\partial_t\varphi(t^\vp,x^\vp)+H_w(t^\vp,x^\vp,w,\nabla\varphi,D^2\varphi)|_{(t,x)=(t^\vp,x^\vp)}\\
    &=\vp\big(\partial_t\chi(t^\vp,x^\vp)+ H_w(t^\vp,x^\vp,\chi,\partial_t\chi,\nabla\chi,D^2\chi)|_{(t,x)=(t^\vp,x^\vp)}\big)<0.
  \end{align*}
  This contradiction indicates that $M^\alpha(\vp)\leq 0$ holds for each $\vp>0$. Thus, we have that $w(t,x)\leq \vp\chi(t,x)$ for all $t\in[t_1,T]\times\calO^\alpha$. Sending $\vp\to0$ yields that $w(t,x)\leq 0$ for all $t\in[t_1,T]\times\calO^\alpha$. Finally, applying the same arguments in the intervals $[t_{i+1},t_i]$ $(i=1,2,\cdots)$, where $t_{i+1}:=(t_i-1/C_1)^+$. Therefore, we obtain that for any $0<\alpha\leq\alpha_0$, $w(t,x)\leq 0$ holds for all $(t,x)\in\tT\times\calO^\alpha$. 
\end{proof}
  
\begin{rmk}
	Analogous arguments will yield that the upper value function $U(t,x)$ is the unique viscosity solution of Isaacs equation \eqref{eq:HJBIEquationWithUpperValueFunctionW}. Since $H^-\leq H^+$ holds, any viscosity solution of PDE \eqref{eq:HJBIEquationWithUpperValueFunctionW} is a viscosity supersolution of PDE \eqref{eq:HJBIEquationWithLowerValueFunctionW}. Then \cref{thm:ComparisonTheoremForViscositySubSuperSolutions} indicates that $W(t,x)\leq U(t,x)$. Moreover, if the Isaacs' condition holds, i.e., if for each $(t,x,y,p,A)\in\tT\times\rtn^n\times\rtn\times\rtn^n\times S^n$, it holds that
	\[H^-(t,x,y,p,A)=H^+(t,x,y,p,A),\]
	then PDEs \eqref{eq:HJBIEquationWithLowerValueFunctionW} and \eqref{eq:HJBIEquationWithUpperValueFunctionW} coincide, and the uniqueness for viscosity solution implies that the lower value function $W(t,x)$ equals the upper value function $U(t,x)$. This suggests that the associated stochastic differential game with state constraints admits a value.
\end{rmk}

\section{Appendix: Complementary results}
\label{sec:AppendixComplementaryResults}
This section provides some estimates for solutions of RSDEs and GBSDEs with general coefficients, and their detailed proofs. We first introduce the following RSDE: 
\begin{equation}\label{eq:RSDEsInAppendix}
\begin{cases}
  \displaystyle X^{t,\zeta}_s=\zeta+\int^s_t b(r,X^{t,\zeta}_r)\dif r+\int^s_t\sigma(r,X^{t,\zeta}_r)\dif B_r+\int^s_t\nabla\phi(X^{t,\zeta}_r)\dif\eta^{t,\zeta}_r,\quad s\in[t,T];\\
  \displaystyle\eta^{t,\zeta}_s=\int^s_t\one{\partial\calO}(X^{t,\zeta}_r)\dif\eta^{t,\zeta}_r,\quad \eta^{t,\zeta}_\cdot \text{ is increasing,}
\end{cases}
\end{equation}
where the initial time $t\in\tT$ and initial state $\zeta\in L^4(\Omega,\calF_T,\PR;\overline \calO)$ are given, $b:\Omega\times\tT\times\overline \calO\mapsto\rtn^n$, $\sigma:\Omega\times\tT\times\overline \calO\mapsto\rtn^{n\times d}$, and $b(\cdot,\cdot,x)$ and $\sigma(\cdot,\cdot,x)$ are both $(\calF_t)$-progressively measurable for each $x\in\overline \calO$. A strong solution to the above RSDE is a pair of adapted processes $(X^{t,\zeta}_s,\eta^{t,\zeta}_s)_{s\in[t,T]}$ valued in $\overline \calO\times\rtn^+$ and satisfies this RSDE almost surely for all $s\in[t,T]$. \citet*{Marin-RubioReal2004JTP} gives the existence and uniqueness for solutions of RSDE \eqref{eq:RSDEsInAppendix} under the following assumptions.
\begin{enumerate}
	\renewcommand{\theenumi}{(A\arabic{enumi})}
	\renewcommand{\labelenumi}{\theenumi}
	\setcounter{enumi}{2}
	\item\label{A:BSigmaBoundedAppendix} $b$ and $\sigma$ are uniformly bounded;
	\item\label{A:BSigmaLipschitzAppendix} There exists a constant $K\geq 0$ such that $\pts$, for each $x_1$ and $x_2\in\overline \calO$, 
	\[|b(t,x_1)-b(t,x_2)|+|\sigma(t,x_1)-\sigma(t,x_2)|\leq K|x_1-x_2|.\]
\end{enumerate}
\begin{pro}\label{pro:EstimateForRSDEHatXEta}
	Assume that \ref{A:BSigmaBoundedAppendix} -- \ref{A:BSigmaLipschitzAppendix} hold. For each $s\leq t$, we set $X^{t,\zeta}_s:=\zeta$ and $\eta^{t,\zeta}_s=0$. For each $t$, $t'\in\tT$, $\zeta$, $\zeta'\in L^4(\Omega,\calF_{t\wedge t'},\PR;\overline \calO)$, there exists a constant $C\geq 0$ depending on $K$, $T$, $\phi$, $b$ and $\sigma$ such that $\prs$,
	\begin{equation*}
	\EX\bigg[\sup_{s\in[0,T]}\big|X^{t,\zeta}_s-X^{t',\zeta'}_s\big|^4+\sup_{s\in[0,T]}\big|\eta^{t,\zeta}_s-\eta^{t',\zeta'}_s\big|^4\bigg|\calF_{t\wedge t'}\bigg]\leq C(|\zeta-\zeta'|^4+|t-t'|^2).
	\end{equation*}
	\begin{equation*}
	\EX\bigg[\sup_{s\in[t,T]}\big|X^{t,\zeta}_s\big|^4+\sup_{s\in[t,T]}\big|\eta^{t,\zeta}_s\big|^4\bigg|\calF_t\bigg]\leq C(1+|\zeta|^4).
	\end{equation*}
	Moreover, for each $\lambda>0$, $s\in[t,T]$, there exists a constant $C_{\lambda,s}$ such that for each $\zeta\in L^4(\Omega,\calF_t,\PR;\overline \calO)$,
	\begin{equation*}
	  \prs,\quad \EXlr{\left.\me^{\lambda \eta^{t,\zeta}_s}\right|\calF_t}\leq C_{\lambda,s}.
	\end{equation*}
	
\end{pro}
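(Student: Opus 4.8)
The plan is to establish the three estimates in the reverse of their stated order, since the conditional exponential bound on $\eta^{t,\zeta}$ underpins the other two. Throughout I would exploit that the solution lives in the bounded set $\overline\calO$: because $X^{t,\zeta}_s\in\overline\calO$ and $\zeta\in\overline\calO$, the quantities $\sup_s|X^{t,\zeta}_s|$ and $|\zeta|$ are bounded by a deterministic constant, so the $X$-contribution to the growth estimate is automatic and the only real work there is to control the reflecting process $\eta^{t,\zeta}$. Note, however, that the \emph{difference} estimate for $\hat X$ is genuinely nontrivial, as it asserts Lipschitz dependence on the initial data.

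First I would derive a representation of $\eta^{t,\zeta}$ by applying It\^o's formula to $\phi(X^{t,\zeta}_s)$. Since $\eta^{t,\zeta}$ charges only the set $\{X^{t,\zeta}_r\in\partial\calO\}$, on which $|\nabla\phi|=1$, the reflection integral $\int\langle\nabla\phi(X^{t,\zeta}_r),\nabla\phi(X^{t,\zeta}_r)\rangle\dif\eta^{t,\zeta}_r$ collapses to $\eta^{t,\zeta}_s$; solving for $\eta^{t,\zeta}_s$ expresses it as $\phi(X^{t,\zeta}_s)-\phi(\zeta)$ minus bounded drift terms and a stochastic integral $\int_t^s\langle\sigma^*\nabla\phi,\dif B_r\rangle$ whose integrand is bounded by \ref{A:BSigmaBoundedAppendix} and the boundedness of $\nabla\phi$ on $\overline\calO$. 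The $L^4$-growth bound for $\eta$ then follows from the Burkholder--Davis--Gundy inequality, and the conditional exponential bound $\EX[\me^{\lambda\eta^{t,\zeta}_s}\mid\calF_t]\le C_{\lambda,s}$ follows because $\eta^{t,\zeta}_s$ is dominated by a constant plus a continuous martingale with deterministically bounded quadratic variation: writing $\me^{-\lambda\int\langle\sigma^*\nabla\phi,\dif B\rangle}$ as a stochastic exponential times $\me^{\frac12\lambda^2\langle\cdot\rangle}$ and using that the former is a true martingale (bounded integrand) while the latter is bounded yields the claim for every $\lambda>0$.

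For the difference estimate I would set $\hat X_s:=X^{t,\zeta}_s-X^{t',\zeta'}_s$ and $k_s:=\eta^{t,\zeta}_s+\eta^{t',\zeta'}_s$, assume $t\le t'$ without loss of generality, and work on $[t',T]$. Applying It\^o's formula to $|\hat X_s|^4$, assumption \ref{A:BSigmaLipschitzAppendix} bounds the drift and diffusion contributions by $C|\hat X_s|^4$, while the two reflection cross-terms are controlled by the geometric inequality \eqref{eq:InequalityOfBoundedSetO1}: applied once with $x=X^{t,\zeta}_r,\,x'=X^{t',\zeta'}_r$ and once with the roles exchanged, it gives $2\langle\hat X_r,\nabla\phi(X^{t,\zeta}_r)\rangle\dif\eta^{t,\zeta}_r-2\langle\hat X_r,\nabla\phi(X^{t',\zeta'}_r)\rangle\dif\eta^{t',\zeta'}_r\le C_0|\hat X_r|^2\dif k_r$. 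The decisive device is then to run the argument on the \emph{weighted} process $\me^{-2C_0 k_s}|\hat X_s|^4$, whose $\dif k$-differential exactly cancels this reflection bound; Gronwall's lemma together with Burkholder--Davis--Gundy controls $\EX[\sup_{s}\me^{-C_0 k_s}|\hat X_s|^2\mid\calF_{t'}]$ in every $L^q$ in terms of $|\hat X_{t'}|^2=|X^{t,\zeta}_{t'}-\zeta'|^2$. Finally I would strip the weight by H\"older's inequality, pairing the bounded-in-$L^q$ weighted process against $\me^{2C_0 k_T}$, whose every moment is finite by the exponential bound already established.

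The remaining ingredient --- and, together with the reflection term, the main obstacle --- is the time-shift contribution $|t-t'|^2$, which enters through $|X^{t,\zeta}_{t'}-\zeta'|\le|X^{t,\zeta}_{t'}-\zeta|+|\zeta-\zeta'|$ and the convention $X^{t',\zeta'}_s\equiv\zeta'$ on $[t,t']$. This forces a displacement bound $\EX[\sup_{s\in[t,t+\delta]}|X^{t,\zeta}_s-\zeta|^4\mid\calF_t]\le C\delta^2$. A naive estimate here is circular: bounding $\int\nabla\phi\,\dif\eta$ crudely by $\eta^{t,\zeta}_{t+\delta}$ makes the $X$- and $\eta$-displacements each depend on the other with constants that need not contract. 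I would avoid this by again weighting, applying It\^o to $\me^{-C_0\eta^{t,\zeta}_s}|X^{t,\zeta}_s-\zeta|^2$ to absorb the reflection without passing through $\eta$, first obtaining the second-moment displacement $\EX[|X^{t,\zeta}_s-\zeta|^2]\le C(s-t)$ and then bootstrapping to the fourth moment $C\delta^2$; the companion bound $\EX[|\eta^{t,\zeta}_{t+\delta}|^4\mid\calF_t]\le C\delta^2$ follows directly from the $\phi$-representation of $\eta$ above. The genuinely hard point throughout is that $\eta$ is a random measure to which deterministic Gronwall/BDG machinery does not apply directly; the inequality \eqref{eq:InequalityOfBoundedSetO1} combined with exponential weighting is precisely the mechanism that converts the reflection into a self-cancelling, manageable term.
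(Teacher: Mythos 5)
Your proposal is correct in substance and would yield all three estimates, but the central device differs from the paper's. For the difference estimate the paper does not weight by $\me^{-2C_0 k_s}$ with $k_s=\eta^{t,\zeta}_s+\eta^{t',\zeta'}_s$; it weights by $\hat\phi_s:=\me^{-C_0[\phi(X^{t,\zeta}_s)+\phi(X^{t',\zeta'}_s)]}$. Since $\dif\phi(X^{t,\zeta}_r)$ contains $|\nabla\phi(X^{t,\zeta}_r)|^2\dif\eta^{t,\zeta}_r=\dif\eta^{t,\zeta}_r$ on the support of the reflection, this weight produces exactly the same cancellation against \eqref{eq:InequalityOfBoundedSetO1} as yours, but --- because $\phi$ is bounded on $\overline\calO$ --- it is bounded above and below by deterministic positive constants. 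Consequently the paper never has to strip the weight: it squares the pathwise inequality for $\hat\phi_s|\hat X_s|^2$, applies BDG, H\"older and Gronwall, and reads off the $L^4$ bound directly, with no recourse to exponential moments of $\eta$ and no H\"older pairing against $\me^{2C_0k_T}$. The price is a longer It\^o expansion (the differential of $\hat\phi$ brings in additional bounded drift and martingale terms involving $b$, $\sigma$, $\nabla\phi$, $D^2\phi$), which your version avoids. Your route does work, but note two points of care that the paper's weight renders moot: the H\"older stripping must be performed conditionally on $\calF_{t'}$, so that the Gronwall bound is the $\calF_{t'}$-measurable quantity $|\hat X_{t'}|^4$ and no exponent is lost when taking the $1/p$-th root (conditioning directly on $\calF_{t\wedge t'}$ would degrade $|\zeta-\zeta'|^4+|t-t'|^2$ to a power $1/p$ of itself); and the factor $\me^{2qC_0 k_{t'}}$ that appears when you condition the exponential moment of $k_T$ on $\calF_{t'}$ must be cancelled against the initial value $\me^{-2C_0k_{t'}}$ of your weighted process, since $\eta^{t,\zeta}_{t'}$ is not deterministically bounded. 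Everything else matches the paper: the representation $\eta^{t,\zeta}_s=\phi(X^{t,\zeta}_s)-\phi(\zeta)-\int(\cdots)\dif r-\int\langle\nabla\phi,\sigma\dif B\rangle$ obtained from It\^o's formula applied to $\phi(X^{t,\zeta}_s)$ is precisely how the paper derives both the $L^4$ bounds on $\eta$ and $\hat\eta$ and the conditional exponential moment, and the time-shift contribution $|t-t'|^2$ is likewise handled by running the weighted argument on $[t,t']$, where $X^{t',\zeta'}\equiv\zeta'$.
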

\begin{proof}
	For brevity, we set $\Psi_\cdot:=\Psi^{t,\zeta}_\cdot$, $\Psi'_\cdot:=\Psi^{t',\zeta'}_\cdot$ and $\hat \Psi_\cdot:=\Psi_\cdot-\Psi'_\cdot$, where $\Psi=X$, $\eta$. Denote $\hat\phi_\cdot:=\me^{-C_0[\phi(X_\cdot)+\phi(X'_\cdot)]}$. Without loss of generality, we just consider the case $t\leq t'$. 
	
	Firstly, if $0\leq t\leq t'\leq s\leq T$, we have that 
	\begin{align*}
	  \hat X_s={}&X_{t'}-\zeta'+\int^s_{t'}(b(r,X_r)-b(r,X'_r))\dif r +\int^s_{t'}(\sigma(r,X_r)-\sigma(r,X'_r))\dif B_r\\
	  &+\int^s_{t'}\nabla\phi(X_r)\dif \eta_r-\int^s_{t'}\nabla\phi(X'_r)\dif \eta'_r.
	\end{align*}
	It\^o's formula to $\hat\phi_r|\hat X_r|^2$ on the interval $[t',T]$ yields that
	\begin{align}
	\hat \phi_s|\hat X_s|^2
	={}&\hat \phi_{t'}|X_{t'}-\zeta'|^2+\int^s_{t'}\hat \phi_r\Big[2\langle\hat X_r,b(r,X_r)-b(t,X'_r)\rangle\dif r+|\sigma(r,X_r)-\sigma(r,X'_r)|^2\dif r\nonumber\\
	&+2\langle\hat X_r,(\sigma(r,X_r)-\sigma(r,X'_r))\dif B_r\rangle+2\langle\hat X_r,\nabla\phi(X_r)\dif\eta_r-\nabla\phi(X'_r)\dif\eta'_r\rangle\Big]\nonumber\\
	&-C_0\int^s_{t'}\hat\phi_r|\hat X_r|^2 \Big[\langle\nabla\phi(X_r),b(r,X_r)\rangle\dif r+\langle\nabla\phi(X'_r),b(r,X'_r)\rangle\dif r\nonumber\\
	&\;\;+\!\langle\nabla\phi(X_r),\sigma(r,X_r)\!\dif B_r\rangle\!+\!\langle\nabla\phi(X'_r),\sigma(r,X'_r)\!\dif B_r\rangle\!+\!|\nabla\phi(X_r)|^2\!\dif\eta_r\!+\!|\nabla\phi(X'_r)|^2\!\dif\eta'_r\nonumber\\
	&\;\;+\frac{1}{2}Tr\left\{\sigma\sigma^*(r,X_r)D^2\phi(X_r)+\sigma\sigma^*(r,X'_r)D^2\phi(X'_r)\right\}\dif r\Big]\nonumber\\
	&+\frac{{C_0}^2}{2}\int^s_{t'}\hat \phi_r|\hat X_r|^2|\sigma^*(r,X_r)\nabla\phi(X_r)+\sigma^*(r,X'_r)\nabla\phi(X'_r)|^2\dif r\nonumber\\
	&-2C_0\int^s_{t'}\hat\phi_r\hat X^*_r\big(\sigma(r,X_r)-\sigma(r,X'_r)\big)\big(\sigma^*(r,X_r)\nabla\phi(X_r\!)+\sigma^*(r,X'_r)\nabla\phi(X'_r)\big)\dif r.\label{eq:ItoFormulaToRSDEXMinusX'}
	\end{align}
	Note that $|\nabla\phi(x)|=1$ for each $x\in\partial\calO$. If follows from \eqref{eq:InequalityOfBoundedSetO1} that, for each $s\in[t',T]$,
	\begin{equation*}
	\int^s_{t'}\hat \phi_r\left[2\langle\hat X_r,\nabla\phi(X_r)\rangle-C_0|\hat X_r|^2|\nabla\phi(X_r)|^2\right]\dif\eta_r\leq 0,
	\end{equation*}
	\begin{equation*}
	\int^s_{t'}\hat\phi_r\left[-2\langle\hat X_r,\nabla\phi(X'_r)\rangle-C_0|\hat X_r|^2|\nabla\phi(X'_r)|^2\right]\dif\eta'_r\leq 0.
	\end{equation*}
	Plugging the previous two inequalities into \eqref{eq:ItoFormulaToRSDEXMinusX'}	and noticing that $b$, $\sigma$, $\phi$, $\nabla\phi$ and $D^2\phi$ are uniformly bounded, we can deduce that 
	\begin{align*}
	|\hat X_s|^2
	\leq{}&C\bigg\{|X_{t'}-\zeta'|^2+\int^s_t|\hat X_r|^2\dif r+\int^s_t\hat\phi_r\langle\hat X_r,(\sigma(r,X_r)-\sigma(r,X'_r))\dif B_r\rangle\\
	&\qquad-\int^s_t\hat\phi_r|\hat X_r|^2 \left[\langle\nabla\phi(X_r),\sigma(r,X_r)\dif B_r\rangle+\langle\nabla\phi(X'_r),\sigma(r,X'_r)\dif B_r\rangle\right]\bigg\},%\label{eq:AfterItoFormulaToRSDEXMinusX}
	\end{align*}
	where $C\geq 0$ is a generic constant depending on $K$, $b$, $\sigma$ and $\phi$, and it allowed to vary from line to line. Squaring and taking supremum with respect to $s\in[t',T]$, we can deduce by the BDG, H\"older and Gronwall inequalities that $\prs$,
	\begin{equation}\label{eq:RSDEHatXEstimateFromtToT'}
	  \EX\bigg[\sup_{s\in[t',T]}|\hat X_s|^4\bigg|\calF_t\bigg]\leq C\EXlr{\left.|X_{t'}-\zeta'|^4\right|\calF_t}.%\leq C(|\zeta-\zeta'|^4+|t-t'|^2).
	\end{equation}
	
	Secondly, if $0\leq t\leq s\leq t'\leq T$, we have that, for each $s\in[t,t']$, 
	\begin{equation*}
	\hat X_s=\zeta-\zeta'+\int^s_tb(r,X_r)\dif r+\int^s_t\sigma(r,X_r)\dif B_r+\int^s_t\nabla\phi(X_r)\dif\eta_r.
	\end{equation*}
	Analogous to the previous arguments, applying It\^o's formula to $\hat \phi_r|\hat X_r|^2$ in the interval $[t,t']$, inequality \eqref{eq:InequalityOfBoundedSetO1}, the fact $|\sigma(t,x)|\leq K(1+|x|)$, and noticing that $b$, $\sigma$, $\phi$, $\nabla\phi$ and $D^2\phi$ are uniformly bounded, we can conclude that 
  \begin{equation*}
    |\hat X_s|^2
    \leq C\bigg\{|\zeta-\zeta'|^2+|t-t'|+\int^s_t|\hat X_r|^2\dif r+\int^s_t\hat\phi_r\langle\hat X_r+|\hat X_r|^2 \nabla\phi(X_r),\sigma(r,X_r)\dif B_r\rangle\bigg\},%\label{eq:ItoFormulaToRSDEXMinusZetaEnlarge}
  \end{equation*}
  where $C\geq 0$ also depends on $T$. Square and take supremum with respect to $s\in[t,t']$ in both sides, then the BDG, H\"older and Gronwall inequalities imply that $\prs$,
  \begin{equation}\label{eq:RSDEHatXEstimateFromtTot'}
    \EX\bigg[\sup_{s\in[t,t']}\big|\hat X_s\big|^4\bigg|\calF_t\bigg]\leq C(|\zeta-\zeta'|^4+|t-t'|^2).
  \end{equation}
  The previous inequality indicates that $\prs$, $\EX[|X_{t'}-\zeta'|^4|\calF_t]\leq C(|\zeta-\zeta'|^4+|t-t'|^2)$, which together with \eqref{eq:RSDEHatXEstimateFromtToT'} suggests that $\EX[\sup_{s\in[t',T]}|\hat X_{s}|^4|\calF_t]\leq C(|\zeta-\zeta'|^4+|t-t'|^2)$. 
  
  Thirdly, if $0\leq s\leq t\leq t'\leq T$, we have $X_s=\zeta$ and $X'_s=\zeta'$, which means that the desired result is trivial. Thereby, we obtain the following inequality, $\prs$,
  \begin{equation*}
    \EX\bigg[\sup_{s\in[0,T]}|\hat X_s|^4\bigg|\calF_t\bigg]\leq C(|\zeta-\zeta'|^4+|t-t'|^2).
  \end{equation*}
  
	Moreover, It\^o's formula to $\phi(X_r)$ on the interval $[t,s]$ yields that, for each $s\in[t,T]$, 
  \begin{align*}
    \eta_s\!=\!\phi(X_s)\!-\!\phi(\zeta)\!-\!\!\int^s_t\bigg[\frac{1}{2}Tr\big\{D^2\phi(X_r)\sigma\sigma^*(r,X_r)\big\}+\langle\nabla \phi(X_r),b(r,X_r)\rangle\bigg]\!\!\dif r \!-\!\!\int^s_t\langle\nabla\phi(X_r),\sigma(r,X_r)\!\dif B_r\rangle.
  \end{align*}
  Note that $\phi$, $\nabla\phi$, $D^2\phi$, $b$ and $\sigma$ are Lipschitz continuous and bounded. It is easily follows from the previous identity that for each $\lambda>0$ and $s\in[t,T]$, there exists a constant $C_{\lambda,s}$ such that $\prs$,
  \begin{equation*}
    \EX\bigg[\sup_{s\in[0,T]}\big|\eta^{t,\zeta}_s-\eta^{t',\zeta'}_s\big|^4\bigg|\calF_t\bigg]\leq C(|\zeta-\zeta'|^4+|t-t'|^2);\quad \EXlr{\left.\me^{\lambda \eta^{t,\zeta}_s}\right|\calF_t}\leq C_{\lambda,s}.
  \end{equation*}	
	Then the desired result follows.
\end{proof}

Next we construct some a priori estimates for solutions of the following GBSDE, 
\begin{equation}\label{eq:GBSDEsInAppendix}
  Y^{t,\zeta}_s=\Phi(X^{t,\zeta}_T)+\int^T_sg(r,\Theta^{t,\zeta}_r)\dif r +\int^T_sf(r,X^{t,\zeta}_r,Y^{t,\zeta}_r)\dif \eta^{t,\zeta}_r-\int^T_s\langle Z^{t,\zeta}_r,\dif B_r\rangle, \quad s\in[t,T],
\end{equation}
where $(X^{t,\zeta}_s,\eta^{t,\zeta}_s)_{s\in[t,T]}$ is the solution of RSDE \eqref{eq:RSDEsInAppendix}, the mappings $\Phi:\Omega\times\rtn^n\mapsto\rtn$ is $\calF_T$-measurable, $g:\Omega\times\tT\times\overline \calO\times\rtn\times\rtn^d$ and $f:\Omega\times\tT\times\overline \calO\times\rtn\mapsto\rtn$ are $(\calF_t)$-progressively measurable, and satisfy the following assumptions.
\begin{enumerate}
	\renewcommand{\theenumi}{(A\arabic{enumi})}
	\renewcommand{\labelenumi}{\theenumi}
	\setcounter{enumi}{4}
	\item\label{H:GFContinuousAppendix} For each $\theta\in M$, $y\mapsto g(t,\theta)$ is continuous,  $f(\omega,\cdot,\cdot,\cdot)\in C^{1,2,2}(\tT\times\rtn^n\times\rtn)$,
	\item\label{H:GFMonotonicInYLipschitzInZAppendix} There exist some constants $\lambda_1$, $\lambda_2\in\rtn$ and $K\geq 0$ such that $\pts$, $\theta$, $\theta_1$, $\theta_2\in M$,
	\begin{enumerate}
		\renewcommand{\theenumii}{(\roman{enumii})}
		\renewcommand{\labelenumii}{\theenumii}
		\item\label{A:GFMonotonicityInYAppendix} $(y_1-y_2)\big(g(t,x,y_1,z)-g(t,x,y_2,z)\big)\leq \lambda_1|y_1-y_2|^2$, 
		
		$(y_1-y_2) \big(f(t,x,y_1)-f(t,x,y_2)\big)\leq\lambda_2|y_1-y_2|^2$;
		\item\label{A:ItemGFLipschitzInXZAppendix} $|\Phi(x_1)-\Phi(x_2)|+|g(t,x_1,y,z_1)-g(t,x_2,y,z_2)|+|f(t,x_1,y)-f(t,x_2,y)|\leq K(|x_1-x_2|+|z_1-z_2|)$;
		\item\label{A:ItemGFLinearGrowthAppendix} $|\Phi(x)|+|g(t,0,y,0)|+|f(t,0,y)|\leq K(1+|x|+|y|)$.
	\end{enumerate}
\end{enumerate}
Under the previous assumptions Theorem 1.7 in \citet*{PardouxZhang1998PTRF} insures that GBSDE \eqref{eq:GBSDEsInAppendix} admits a unique solution $(Y^{t,\zeta}_s,Z^{t,\zeta}_s)_{s\in[t,T]}\in\calS^2\times\calH^2$. 

\begin{pro}\label{pro:EstimatesForSolutionsOfGBSDEsAppendix}
	Assume that \ref{H:GFContinuousAppendix} -- \ref{H:GFMonotonicInYLipschitzInZAppendix} hold. Then for each $t$, $t'\in\tT$ and $\zeta$, $\zeta'\in L^4(\Omega,\calF_{t\wedge t'},\PR;\overline \calO)$, there exists a constant $C\geq 0$ depending on $K$, $T$, $\phi$, $b$, $\sigma$, $g$ and $f$ such that $\prs$,
	\begin{equation}\label{eq:EstimateForGBSDEYZBounded}
	  \EX\bigg[\sup_{s\in[t,T]}|Y^{t,\zeta}_s|^2+\int^T_t|Z^{t,\zeta}_s|^2\dif s\bigg|\calF_t\bigg]\leq C(1+|\zeta|^2),
	\end{equation}
	\begin{equation}\label{eq:EstimateForGBSDESupHatYtLeqHatZetaHatT}
	\EX\bigg[\sup_{s\in[t\vee t',T]}|Y^{t,\zeta}_s-Y^{t',\zeta'}_s|^2+\int^T_{t\vee t'}|Z^{t,\zeta}_s-Z^{t',\zeta'}_s|^2\dif s\bigg|\calF_{t\vee t'}\bigg]\leq C(|\zeta-\zeta'|^2+|\zeta-\zeta'|+|t-t'|+|t-t'|^{1/2}),
	\end{equation}
	\begin{equation*}%\label{eq:EstimateForGBSDEHatYWithoutSup}
	  |Y^{t,\zeta}_t-Y^{t',\zeta'}_{t'}|\leq  C(|\zeta-\zeta'|+|\zeta-\zeta'|^{1/2}+|t-t'|^{1/2}+|t-t'|^{1/4}).
	\end{equation*}
\end{pro}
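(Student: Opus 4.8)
The plan is to recast the GBSDE \eqref{eq:GBSDEsInAppendix} into the abstract form \eqref{eq:GBSDEOneDimensional}, with $A_\cdot=\eta^{t,\zeta}_\cdot$, terminal value $\Phi(X^{t,\zeta}_T)$ and generators $g(r,X^{t,\zeta}_r,\cdot,\cdot)$ and $f(r,X^{t,\zeta}_r,\cdot)$, and then to prove the three estimates in the stated order, each feeding into the next. Throughout I would use the conditional versions of \cref{lem:EstimateForGBSDEYZDrivenByA,lem:AprioriEstimateForDifferenceOfYs} together with the pathwise RSDE bounds of \cref{pro:EstimateForRSDEHatXEta}; the recurring device is a conditional Cauchy--Schwarz inequality that separates the exponential weights $\me^{\lambda\eta}$, controlled by $\EX[\me^{\lambda\eta^{t,\zeta}_s}\mid\calF_t]\le C_{\lambda,s}$, from the polynomial factors in $X$ and $\eta$, controlled by the fourth-moment bounds. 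The boundedness of $\overline\calO$, hence of $X^{t,\zeta}$, together with the moment bounds \eqref{eq:EstimateForGBSDEYZBounded} on $Y^{t,\zeta}$, is used repeatedly to convert linear-growth constants into genuine constants.

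For \eqref{eq:EstimateForGBSDEYZBounded} I would bound $|\Phi(X^{t,\zeta}_T)|$, $|g(r,X^{t,\zeta}_r,0,0)|$ and $|f(r,X^{t,\zeta}_r,0)|$ by $C(1+|X^{t,\zeta}_r|)$ using \ref{A:ItemGFLipschitzInXZAppendix}--\ref{A:ItemGFLinearGrowthAppendix}, then apply the conditional form of \cref{lem:EstimateForGBSDEYZDrivenByA}. Since $\me^{\lambda\eta}\ge 1$, the unweighted left-hand side is dominated by its weighted counterpart; on the right, Cauchy--Schwarz splits $\EX[\me^{2\lambda\eta_T}\mid\calF_t]^{1/2}$ off from $\EX[(1+|X^{t,\zeta}_T|)^4\mid\calF_t]^{1/2}\le C(1+|\zeta|^2)$, while the $\dif\eta$-integral is absorbed through $\int\me^{\lambda\eta}\dif\eta=\lambda^{-1}(\me^{\lambda\eta_T}-1)$. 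This produces the bound $C(1+|\zeta|^2)$.

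The estimate \eqref{eq:EstimateForGBSDESupHatYtLeqHatZetaHatT} is the heart of the matter. Taking WLOG $t\le t'$ and working on the common interval $[t',T]$, both pairs solve GBSDEs there, and I would invoke \cref{lem:AprioriEstimateForDifferenceOfYs} with $k_t=|\hat A|_t+A^2_t$. The contributions of $\hat\xi$, $\hat g(\cdot,Y^1,Z^1)$ and $\hat f(\cdot,Y^1)$ are each dominated by $K|\hat X|$ via the $x$-Lipschitz condition \ref{A:ItemGFLipschitzInXZAppendix}; after Cauchy--Schwarz against the exponential weights (noting $|\hat A|_t\le\eta^{t,\zeta}_t+\eta^{t',\zeta'}_t$, so the moments of $\me^{\lambda k}$ factor into controllable exponential moments of the two $\eta$'s) and insertion of $\EX[\sup|\hat X|^4\mid\calF_{t'}]\le C(|\zeta-\zeta'|^4+|t-t'|^2)$, they yield exactly the second-order terms $|\zeta-\zeta'|^2+|t-t'|$. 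I expect the reflection-mismatch term $\int\me^{\lambda k_r}|f^1(r,Y^1_r)|^2\dif|\hat A|_r$ to be the main obstacle: the total variation of $\hat\eta=\eta^{t,\zeta}-\eta^{t',\zeta'}$ is of order one and is not controlled by $\sup|\hat\eta|$, so the crude bound supplied by the lemma is insufficient. To get around this I would return to the It\^o expansion of $\me^{\lambda k_s}|\hat Y_s|^2$ and retain the \emph{signed} integral $\int\Xi_s\dif\hat\eta_s$, with $\Xi_s=2\me^{\lambda k_s}\hat Y_sf^1(s,Y^1_s)$, rather than passing to $\dif|\hat A|$. Since $f\in C^{1,2,2}$ by \ref{H:GFContinuousAppendix}, the integrand $f(s,X^{t,\zeta}_s,Y^1_s)$ is a semimartingale, so integration by parts gives $\int_{t'}^T\Xi_s\dif\hat\eta_s=\Xi_T\hat\eta_T-\Xi_{t'}\hat\eta_{t'}-\int_{t'}^T\hat\eta_s\dif\Xi_s$; the boundary and finite-variation terms are estimated by $\sup|\hat\eta|$ times bounded quantities, the It\^o part of $\int\hat\eta\,\dif\Xi$ vanishes under $\EX[\,\cdot\mid\calF_{t'}]$, and the sharp bound $\EX[\sup|\hat\eta|^2\mid\calF_{t'}]^{1/2}\le C(|\zeta-\zeta'|+|t-t'|^{1/2})$ extracted from \cref{pro:EstimateForRSDEHatXEta} delivers the remaining square-root terms. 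The crude a priori bound $\EX[\sup|\hat Y|^2]\le C$ coming from \eqref{eq:EstimateForGBSDEYZBounded} closes the feedback created by $\hat Y$ sitting inside $\Xi$.

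Finally, for the modulus of $|Y^{t,\zeta}_t-Y^{t',\zeta'}_{t'}|$ (again $t\le t'$) I would take $\calF_t$-conditional expectations, which act as the identity on the $\calF_t$-measurable endpoint $Y^{t,\zeta}_t$ (and, for deterministic initial points, on $Y^{t',\zeta'}_{t'}$) and annihilate the stochastic integral. From the GBSDE on $[t,t']$ this gives $Y^{t,\zeta}_t=\EX[Y^{t,\zeta}_{t'}\mid\calF_t]+\EX[\int_t^{t'}g\,\dif r+\int_t^{t'}f\,\dif\eta^{t,\zeta}\mid\calF_t]$; the last expectation is $O(|t-t'|+|t-t'|^{1/2})$ by linear growth, the moment bounds on $X$ and $Y$, and the small-time reflection bound $\EX[\eta^{t,\zeta}_{t'}\mid\calF_t]\le C|t-t'|^{1/2}$ (the uncontrolled analogue of \eqref{eq:EstimateForXInitialEtaWithTimeDelayWithControls}), while the tower property combined with \eqref{eq:EstimateForGBSDESupHatYtLeqHatZetaHatT}, whose right-hand side is $\calF_t$-measurable, controls $\EX[|Y^{t,\zeta}_{t'}-Y^{t',\zeta'}_{t'}|\mid\calF_t]$ by the square root of the bound in \eqref{eq:EstimateForGBSDESupHatYtLeqHatZetaHatT}. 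Adding the two contributions produces the asserted modulus $C(|\zeta-\zeta'|+|\zeta-\zeta'|^{1/2}+|t-t'|^{1/2}+|t-t'|^{1/4})$.
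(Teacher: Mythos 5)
Your proposal follows essentially the same route as the paper's proof: the first estimate via \cref{lem:EstimateForGBSDEYZDrivenByA}, the linear growth conditions and the exponential moment bound on $\eta$ from \cref{pro:EstimateForRSDEHatXEta}; the second via It\^o's formula applied to the exponentially weighted square of $\hat Y$, with the key step being exactly the one you identify — keeping the signed integral against $\dif\hat\eta$ and integrating by parts using the $C^{1,2,2}$ regularity of $f$, then pairing $\sup_s|\hat\eta_s|$ with the half-power bound from \cref{pro:EstimateForRSDEHatXEta} to produce the $|\zeta-\zeta'|+|t-t'|^{1/2}$ terms; the third by splitting $|Y^{t,\zeta}_t-Y^{t',\zeta'}_{t'}|$ into a time-shift part on $[t,t']$ (controlled by $\EX[\eta^{t,\zeta}_{t'}]\le C|t-t'|^{1/2}$) and a space part handled by the second estimate.

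The one point where you are too optimistic is the claim that, after integration by parts, the finite-variation part of $\dif\Xi_s$ consists of ``bounded quantities.'' The drift of the semimartingale $f(s,X'_s,Y'_s)$ contains second-order It\^o terms proportional to $|Z'_s|^2$ (and the cross terms with $\hat Z$), so after Cauchy--Schwarz against $\sup_s|\hat\eta_s|$ you need a bound on $\EX\big[\big(\int_t^T|Z_r|^2\dif r\big)^2\,\big|\,\calF_t\big]$, not merely on its first moment. The paper obtains this by a separate bootstrap: BDG applied to the GBSDE gives $\EX[(\int_s^T|Z_r|^2\dif r)^2\mid\calF_t]\le C+C(T-s)^2\EX[(\int_s^T|Z_r|^2\dif r)^2\mid\calF_t]$, which closes on short subintervals and is then iterated over a partition of $[t,T]$. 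This is a fillable but genuine extra step your sketch omits; with it inserted, your argument matches the paper's.
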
 

\begin{proof}
	The estimate \eqref{eq:EstimateForGBSDEYZBounded} follows from \cref{lem:EstimateForGBSDEYZDrivenByA}, \ref{A:ItemGFLinearGrowthAppendix} and \cref{pro:EstimateForRSDEHatXEta}. We focus on ourself on the second estimate. We just consider the case $t\geq t'$. To simplify presentation, set $\Psi_\cdot:=\Psi^{t,\zeta}_\cdot$, $\Psi'_\cdot:=\Psi^{t',\zeta'}_\cdot$, $\hat \Psi_\cdot:=\Psi_\cdot-\Psi'_\cdot$, where $\Psi=X$, $Y$, $Z$ and $\eta$. For two constants $\lambda$, $\mu\geq 0$ which will be chosen latter, It\^o's formula to $\me^{\lambda r+\mu\eta_r}|\hat Y_r|^2$  yields that, for each $s\in[t,T]$,
	\begin{align}
	&\me^{\lambda s+\mu\eta_s}|\hat Y_s|^2+\lambda\int^T_s\me^{\lambda r+\mu\eta_r}|\hat Y_r|^2\dif r +\mu\int^T_s\me^{\lambda r+\mu\eta_r}|\hat Y_r|^2\dif\eta_r+\int^T_s\me^{\lambda r+\mu\eta_r}|\hat Z_r|^2\dif r \nonumber\\
	&=\me^{\lambda T+\mu\eta_T} |\Phi(X_T)-\Phi(X'_T)|^2+2\int^T_s\me^{\lambda r+\mu\eta_r}\hat Y_r\big(g(r,\Theta_r)-g(r,\Theta'_r)\big)\dif r \nonumber\\
	&\quad\;+2\int^T_s\me^{\lambda r+\mu\eta_r}\hat Y_r\big(f(r,X_r,Y_r)\dif\eta_r-f(r,X'_r,Y'_r)\dif\eta'_r\big)-2\int^T_s\me^{\lambda r+\mu\eta_r}\langle\hat Z_r,\hat Y_r\dif B_r\rangle.\label{eq:ItoFormulaToHatYAppendix}
	\end{align}
	It follows from \ref{H:GFMonotonicInYLipschitzInZAppendix} and a basic inequality ($2ab\leq \alpha a^2+b^2/\alpha$ for all $\alpha>0$) that
	\begin{align*}
	2\hat Y_r\big(g(r,\Theta_r)-g(r,\Theta'_r)\big)
	\leq 2\lambda_1|\hat Y_r|^2+2K|\hat Y_r|(|\hat X_r|+|\hat Z_r|)
	\leq |\hat X_r|^2+(2\lambda_1+3K^2)|\hat Y_r|^2+\frac{1}{2}|\hat Z_r|^2,
	\end{align*}
	and, with adding and subtracting terms $f(r,X_r,Y'_r)\dif\eta_r$ and $f(r,X'_r,Y'_r)\dif\eta_r$, 
	\begin{align*}
	2\hat Y_r\big(f(r,X_r,Y_r)\dif\eta_r-f(r,X'_r,Y'_r)\big)\dif\eta'_r
	% &=2\hat Y_r \big(f(r,X_r,Y_r)-f(r,X'_r,Y'_r)\big)\dif\eta_r +2\hat Y_rf(r,X'_r,Y'_r)\dif\hat\eta_r\\
	% &=2\hat Y_r \big(f(r,X_r,Y_r)-f(r,X_r,Y'_r)\big)\dif\eta_r +2\hat Y_r \big(f(r,X_r,Y'_r)-f(r,X'_r,Y'_r)\big)\dif\eta_r+2\hat Y_r f(r,X'_r,Y'_r)\dif\hat\eta_r\\
	&\leq 2\lambda_2|\hat Y_r|^2\dif\eta_r+2K|\hat Y_r||\hat X_r|\dif\eta_r+2\hat Y_r f(r,X'_r,Y'_r)\dif\hat\eta_r\\
	&\leq (2\lambda_2+K^2)|\hat Y_r|^2\dif\eta_r+|\hat X_r|^2\dif\eta_r+2\hat Y_r f(r,X'_r,Y'_r)\dif\hat\eta_r.
	\end{align*}
	Plugging the previous two inequalities into \eqref{eq:ItoFormulaToHatYAppendix} and choosing $\lambda\geq 2\lambda_1+3K^2$, $\mu\geq 2\lambda_2+K^2$, we can derive that, for each $s\in[t,T]$,
	\begin{align}
	  \me^{\mu\eta_s}|\hat Y_s|^2
	  +\frac{1}{2}\int^T_s\!\me^{\mu\eta_r}|\hat Z_r|^2\dif r
	  \leq{}& \me^{\lambda T+\mu\eta_T}|\Phi(X_T)-\Phi(X'_T)|^2
	  +\!\int^T_s\!\me^{\lambda r+\mu\eta_r}|\hat X_r|^2\dif r
	  +\!\int^T_s\!\me^{\lambda r+\mu\eta_r}|\hat X_r|^2\dif\eta_r\nonumber\\
	  &+2\int^T_s\me^{\lambda r
	  +\mu\eta_r}\hat Y_rf(r,X'_r,Y'_r)\dif\hat{\eta}_r
  	-2\int^T_s\me^{\lambda r
  	+\mu\eta_r}\langle\hat Z_r,\hat Y_r\dif B_r\rangle.\label{eq:ItoFormulaToHatYAfterInnerProductLarged}
	\end{align}
	Now we estimate the last line in the previous inequality. It\^o's formula to $\me^{\lambda r+\mu\eta_r}\hat Y_rf(r,X'_r,Y'_r)\hat \eta_r$ reads that, for each $s\in[t,T]$,
	\begin{align}
	\int^T_s\me^{\lambda r+\mu\eta_r}\hat Y_rf(r,X'_r,Y'_r)\dif\hat{\eta}_r
	&=\me^{\lambda T+\mu\eta_T}\hat Y_Tf(T,X'_T,Y'_T)\hat \eta_T-\me^{\lambda s+\mu\eta_s}\hat Y_sf(s,X'_s,Y'_s)\hat \eta_s\nonumber\\
	&\hspace{-3cm}+\int^T_s\me^{\lambda r+\mu\eta_r}\hat \eta_rf^1_r\dif r+\int^T_s\me^{\lambda r+\mu\eta_r}\hat \eta_rf^2_r\dif \eta'_r+\int^T_s\me^{\lambda r+\mu\eta_r}\hat \eta_rf^3_r\dif \eta_r\nonumber\\
	&\hspace{-3cm}-\int^T_s\me^{\lambda r+\mu\eta_r}\hat Y_r\hat \eta_r\big[\langle\nabla_xf(r,X'_r,Y'_r),\sigma(r,X'_r)\dif B_r\rangle+\langle Z'_r,\partial_yf(r,X'_r,Y'_r)\dif B_r\rangle\big]\nonumber\\
	&\hspace{-3cm}-\int^T_s\me^{\lambda r+\mu\eta_r}\hat \eta_rf(r,X'_r,Y'_r)\langle \hat Z_r,\dif B_r\rangle,\label{eq:ItoFormulaToHatYFHatEta}
	\end{align}
	where we have written
	\begin{align*}
	f^1_r:={}&f(r,X'_r,Y'_r)\big(g(r,\Theta_r)-g(r,\Theta'_r)\big)
	-\langle\hat Z_r,\sigma^*(r,X'_r)\nabla_x f(r,X'_r,Y'_r)+\partial_y f(r,X'_r,Y'_r)Z'_r\rangle\\
	&-\hat Y_r\Big[\partial_r f(r,X'_r,Y'_r)+\langle\nabla_x f(r,X'_r,Y'_r),b(r,X'_r)\rangle-\partial_yf(r,X'_r,Y'_r)g(r,\Theta'_r)\\
	&\qquad\;+\frac{1}{2}Tr\{D^2_xf(r,X'_r,Y'_r)\sigma\sigma^*(r,X'_r)\}+\langle \sigma^*(r,X'_r)D^2_{xy}f(r,X'_r,Y'_r),Z'_r\rangle\\
	&\qquad\;+\frac{1}{2}\partial^2_yf(r,X'_r,Y'_r)|Z'_r|^2+\lambda f(r,X'_r,Y'_r)\Big],\\
	f^2_r:={}&\hat Y_r\big(\partial_yf(r,X'_r,Y'_r)f(r,X'_r,Y'_r)-\langle \nabla_xf(r,X'_r,Y'_r),\nabla\phi(X'_r)\rangle\big)+|f(r,X'_r,Y'_r)|^2,\\
	f^3_r:={}&(f(r,X_r,Y_r)-\mu\hat Y_r)f(r,X'_r,Y'_r).
	\end{align*}
	Since $\overline \calO$ is bounded, there exists a generic constant $C\geq 0$, which it is allowed to vary from line to line, such that $\prs$, $|\zeta|+|\zeta'|+|X_s|+|X'_s|\leq C$ for each $s\in[t,T]$. Hence, it follows from \eqref{eq:EstimateForGBSDEYZBounded} that $\prs$,
	$|Y_t|^2+|Y'_{t'}|^2\leq C$ and  $|Y^{t,\zeta}_t|\leq C(1+|\zeta|)$. Then by the uniqueness for the solutions of RSDE \eqref{eq:RSDEsInAppendix} and GBSDE \eqref{eq:GBSDEsInAppendix} we get that $\prs$, for each $s\in[t,T]$,
	\begin{equation*}
	  |Y_s|=|Y^{t,\zeta}_s|=|Y^{s,X^{t,\zeta}_s}_s|\leq C(1+|X^{t,\zeta}_s|)\leq C.
	\end{equation*} 
	Then we can deduce by \ref{A:ItemGFLipschitzInXZAppendix} and \ref{A:ItemGFLinearGrowthAppendix} that, $\!\dif\PR\!\times\!\!\dif r\text{\,--\,\,}a.e.$,
	\begin{gather*}
	  |g(r,X_r,Y_r,Z_r)|\leq K(1+|X_r|+|Y_r|+|Z_r|)\leq C(1+|Z_r|),\\
	  |f(r,X_r,Y_r)|\leq K(1+|X_r|+|Y_r|)\leq C.
	\end{gather*}
	Similarly, it holds that $\!\dif\PR\!\times\!\!\dif r\text{\,--\,\,}a.e.$, $g(r,X'_r,Y'_r,Z'_r)\leq C(1+|Z'_r|)$ and $f(r,X'_r,Y'_r)\leq C$, whence 
	$|f^1_r|\leq C(1+|Z_r|^2+|Z'_r|^2+|\hat Z_r|^2)$ and $|f^2_r|+|f^3_r|\leq C$. Thus, plugging \eqref{eq:ItoFormulaToHatYFHatEta} into \eqref{eq:ItoFormulaToHatYAfterInnerProductLarged} and taking conditional expectation with respect to $\calF_t$, then by \ref{A:ItemGFLipschitzInXZAppendix}, H\"older's inequality and \cref{pro:EstimateForRSDEHatXEta} we can deduce that,
	\begin{align*}
	  \EX\bigg[\me^{\mu\eta_s}|\hat Y_s|^2+\int^T_s\me^{\mu\eta_r}|\hat Z_r|^2\dif r\bigg|\calF_{t}\bigg]
	  \leq{}& C\bigg(\EX\bigg[\sup_{s\in[t,T]}|\hat X_s|^4\bigg|\calF_t\bigg]\bigg)^{1/2}
	  +C\bigg(\EX\bigg[\sup_{s\in[t,T]}|\hat \eta_s|^4\bigg|\calF_t\bigg]\bigg)^{1/4}\\
	  &+C\EX\bigg[\int^T_s\me^{\mu\eta_r}|\hat \eta_r|(|Z_r|^2+|Z'_r|^2)\dif r\bigg|\calF_t\bigg].
	\end{align*}
	It follows from GBSDE \eqref{eq:GBSDEsInAppendix}, BDG's inequality,  the boundedness of $X_\cdot$, $Y_\cdot$ and $f$ and \cref{pro:EstimateForRSDEHatXEta} that
	\begin{align*}
		\EX\bigg[\bigg(\int^T_s|Z_r|^2\dif r\bigg)^{2}\bigg|\calF_t\bigg]
		\leq C\EX\bigg[\sup_{r\in[s,T]}\bigg|\int^r_sZ_u\dif B_u\bigg|^4\bigg|\calF_t\bigg]
		\leq C+C(T-s)^{2}\EX\bigg[\bigg(\int^T_s|Z_r|^2\dif r\bigg)^{2}\bigg|\calF_t\bigg].
	\end{align*}
	Then for each $s\in[T-\sqrt{1/(2C)},T]$, we have that $\EX[(\int^T_s|Z_r|^2\dif r)^{2}|\calF_t]\leq C$.	Then making a partition $t=t_0<t_1<\cdots<t_N=T$ of the interval $[t,T]$ such that $t_{i+1}-t_i\leq \sqrt{1/(2C)}$, and repeating the previous arguments, we get that 
	$\EX[(\int^T_t|Z_r|^2\dif r)^{2}|\calF_t]\leq C$.	Analogous arguments yield that $\EX[(\int^T_{t}|Z'_r|^2\dif r)^2|\calF_{t}]\leq C.$ Hence, \cref{pro:EstimateForRSDEHatXEta} and H\"older's inequality indicate that 
	\begin{equation}
	  \EX\bigg[\me^{\mu\eta_s}|\hat Y_s|^2+\int^T_s\me^{\mu\eta_r}|\hat Z_r|^2\dif r\bigg|\calF_t\bigg]
	  \leq C(|\zeta-\zeta'|^2+|\zeta-\zeta'|+|t-t'|+|t-t'|^{1/2}).
	\end{equation}
	Moreover, BDG's and H\"older's inequalities yield that 
	\begin{align*}
	  \EX\bigg[\sup_{s\in[t,T]}\bigg|\int^s_t\me^{\mu\eta_r}\langle \hat Z_r,\hat Y_r\dif B_r\rangle\bigg|\bigg|\calF_t\bigg]
	  &\leq C\EX\bigg[\bigg(\int^T_t\me^{2\mu\eta_r}|\hat Y_r|^2|\hat Z_r|^2\dif r\bigg)^{1/2}\bigg|\calF_{t}\bigg]\\
	  &\leq \EX\bigg[\frac{1}{4}\sup_{s\in[t,T]}\me^{\mu\eta_s}|\hat Y_s|^2
	  +\frac{C^2}{2}\int^T_{t}\me^{\mu\eta_r}|\hat Z_r|^2\dif r\bigg|\calF_{t}\bigg],
	\end{align*}
	\begin{align*}
	  \EX\bigg[\sup_{s\in[t,T]}\bigg|\int^s_t\!\me^{\mu\eta_r}\hat \eta_rf(r,X'_r,Y'_r)\langle \hat Z_r,\!\dif B_r\rangle\bigg|\bigg|\calF_t\bigg]\!\!
	  \leq\! C\bigg(\EX\bigg[\sup_{s\in[t,T]}|\hat \eta_s|^4\bigg|\calF_t\bigg]\bigg)^{1/2}\!\!
	  +C\EX\bigg[\int^T_t\!\me^{\mu\eta_r}|\hat Z_r|^2\dif r\bigg|\calF_t\bigg],
	\end{align*}
	\begin{align*}
	  &\EX\bigg[\sup_{s\in[t,T]}\bigg|\int^s_t\me^{\mu\eta_r}\hat Y_r\hat \eta_r\big[\langle\nabla_xf(r,X'_r,Y'_r),\sigma(r,X'_r)\dif B_r\rangle
	  +\langle Z'_r,\partial_yf(r,X'_r,Y'_r)\dif B_r\rangle\big]\bigg|\bigg|\calF_t\bigg]\\
	  &\leq C\bigg(\EX\bigg[\sup_{s\in[t,T]}|\hat \eta_s|^4\bigg|\calF_t\bigg]\bigg)^{1/4}
	  +\frac{1}{4}\EX\bigg[\sup_{s\in[t,T]}\me^{\mu\eta_s}|\hat Y_s|^2\bigg|\calF_t\bigg].
	\end{align*}
	Then taking supremum with respect to $s$ in $[t,T]$ and conditional expectation with respect to $\calF_t$ on both sides of \eqref{eq:ItoFormulaToHatYAfterInnerProductLarged}, we obtain that
	\begin{align*}
	  \EX\bigg[\sup_{s\in[t,T]}\me^{\mu\eta_s}|\hat Y_s|^2\bigg|\calF_{t}\bigg]
	  &\leq
	  C\bigg(\EX\bigg[\sup_{s\in[t,T]}|\hat X_s|^4\bigg|\calF_t\bigg]\bigg)^{1/2}
	  +C\bigg(\EX\bigg[\sup_{s\in[t,T]}|\hat \eta_s|^4\bigg|\calF_t\bigg]\bigg)^{1/4}\\
	  &\qquad+C\bigg(\EX\bigg[\sup_{s\in[t,T]}|\hat \eta_s|^4\bigg|\calF_t\bigg]\bigg)^{1/2}
	  +C\EX\bigg[\int^T_t\!\me^{\mu\eta_r}|\hat Z_r|^2\dif r\bigg|\calF_t\bigg]\\
	  &\leq C(|\zeta-\zeta'|^2+|\zeta-\zeta'|+|t-t'|+|t-t'|^{1/2}).
	\end{align*}
	Therefore, we get the estimate \eqref{eq:EstimateForGBSDESupHatYtLeqHatZetaHatT}. 
	
	We proceed to estimate $|Y_t-Y'_{t'}|$. It follows from \eqref{eq:EstimateForGBSDESupHatYtLeqHatZetaHatT} that, if $t\leq t'$, 
	\[|Y_{t'}-Y'_{t'}|^2
	\leq C(|\zeta-\zeta'|^2+|\zeta-\zeta'|+|t-t'|+|t-t'|^{1/2}).\]
	For large enough $\lambda\geq0$, It\^o's formula to $\me^{\lambda r}|\overline Y_r|^2$ with $\overline Y_r:=Y_r-Y_{t'}$ in the interval $[t,t']$ yields that 
	\begin{align*}
	  &\me^{\lambda t}|\overline Y_t|^2+\lambda\int^{t'}_t\me^{\lambda r}|\overline Y_r|^2\dif r+\int^{t'}_t\me^{\lambda r}|Z_r|^2\dif r\\
	  &=2\int^{t'}_t\me^{\lambda r}\overline Y_rg(r,\Theta_r)\dif r
	  +2\int^{t'}_t\me^{\lambda r}\overline Y_rf(r,X_r,Y_r)\dif \eta_r-2\int^{t'}_t\me^{\lambda r}\langle Z_r,\overline Y_r\dif B_r\rangle.
	\end{align*}
	Then the linear growth of $g$ and the boundedness of $X_\cdot$ and $Y_\cdot$ yield that 
	\begin{equation*}
	  2\overline Y_rg(r,\Theta_r)\leq 2C|\overline Y_r|(1+|Z_r|)\leq 2C|\overline Y_r|^2+\frac{1}{2}|Z_r|^2.
	\end{equation*}
	Hence, by choosing $\lambda\geq 2C$, the boundedness of $f$ and \cref{pro:EstimateForRSDEHatXEta} we deduce that
	\begin{align*}
	  \EXlr{|\overline Y_t|^2}
	  \leq C\EX[\eta_{t'}]\leq C|t-t'|^{1/2},
	\end{align*}
	where we have used the first estimate in \cref{pro:EstimateForRSDEHatXEta}. Finally, we have that 
	\begin{align*}
	  |Y_t-Y'_{t'}|=\EXlr{|Y_t-Y'_{t'}|}
	  &\leq \EXlr{|Y_t-Y_{t'}|}+\EXlr{|Y_{t'}-Y'_{t'}|}\\
	  &\leq C(|\zeta-\zeta'|+|\zeta-\zeta'|^{1/2}+|t-t'|^{1/2}+|t-t'|^{1/4}).
	\end{align*}
	Then all the desired results are obtained. 
\end{proof}

\end{document}